\definecolor{mauve}{rgb}{0.58,0,0.82}
\definecolor{dkgreen}{rgb}{0,0.6,0}
\newtheorem{theorem}{Theorem}
\newtheorem{proposition}[theorem]{Proposition}
\newtheorem{corollary}[theorem]{Corollary}
\newtheorem{lemma}[theorem]{Lemma}
\newtheorem{claim}[theorem]{Claim}
\tiny\color{gray},
\newcommand{\Ga}{{\sf bull}}
\newcommand{\Gb}{{\sf dart}}
\newcommand{\Gc}{{\sf house}}
\newcommand{\Gd}{{\sf gem}}
\newcommand{\Ge}{{\sf full-house}}
\newcommand{\Gf}{{\sf G_{6,5}}}
\newcommand{\Gg}{{\sf 5-pan}}
\newcommand{\Gh}{{\sf G_{6,7}}}
\newcommand{\Gi}{{\sf G_{6,8}}}
\newcommand{\Gj}{{\sf G_{6,9}}}
\newcommand{\Gk}{{\sf G_{6,10}}}
\newcommand{\Gl}{{\sf co-twin-house}}
\newcommand{\Gm}{{\sf G_{6,12}}}
\newcommand{\Gn}{{\sf co\text{-}twin\text{-}C_5}}
\newcommand{\Go}{{\sf G_{6,14}}}
\newcommand{\Gq}{{\sf G_{6,15}}}
\newcommand{\lrangle}[1]{\left\langle#1\right\rangle}
\newcommand{\diag}{\operatorname{diag}}
\newcommand{\minors}{\operatorname{minors}}
\newcommand{\SNF}{\operatorname{SNF}}
\title{The characterization of graphs with two trivial distance ideals}
\author{Carlos A. Alfaro\thanks{\texttt{carlos.alfaro@banxico.org.mx}, Banco de M\'exico, Mexico}\qquad Teresa I. Hoekstra-Mendoza\thanks{\texttt{maria.idskjen@cimat.mx}, Centro de Investigaci\'on en Matem\'aticas, Guajanajuato, Mexico}\\ Juan Pablo Serrano\thanks{\texttt{jpserranop@math.cinvestav.mx}, Departamento de Matem\'aticas, Centro de Investigaci\'on y de Estudios Avanzados del IPN}\qquad Ralihe R. Villagr\'an\thanks{\texttt{rvillagran@wpi.edu}, Department of Mathematical Sciences, Worcester Polytechnic Institute, Worcester, USA}}
\date{}
\begin{document}

\maketitle

\begin{abstract}
    The distance ideals of graphs are algebraic invariants that generalize the Smith normal form (SNF) and the spectrum of several distance matrices associated with a graph. 
    In general, distance ideals are not monotone under taking induced subgraphs.
    However, in \cite{at} the characterizations of connected graphs with one trivial distance ideal over $\mathbb{Z}[X]$ and over $\mathbb{Q}[X]$ were obtained in terms of induced subgraphs, where $X$ is a set of variables indexed by the vertices.
    Later, in \cite{alfaro2}, the first attempt was made to characterize the family of connected graphs with at most two trivial distance ideals over $\mathbb{Z}[X]$. 
    There, it was proven that these graphs are $\{{\cal F},\textsf{odd-holes}_{7}\}$-free, where $\textsf{odd-holes}_{7}$ consists of the odd cycles of length at least seven and $\cal F$ is a set of sixteen graphs. 
    Here, we give a characterization of the $\{\mathcal{F},\textsf{odd-holes}_{7}\}$-free graphs and prove that the $\{\mathcal{F},\textsf{odd-holes}_{7}\}$-free graphs are precisely the graphs with at most two trivial distance ideals over $\mathbb{Z}[X]$. 
    As byproduct, we also find that the determinant of the distance matrix of a connected bipartite graph is even, this suggests that it is possible to extend, to connected bipartite graphs, the Graham-Pollak-Lovász celebrated formula $\det(D(T_{n+1}))=(-1)^nn2^{n-1}$, and the Hou-Woo result stating that $\SNF(D(T_{n+1}))=\sf{I}_2\oplus 2\sf{I}_{n-2}\oplus (2n)$, for any tree $T_{n+1}$ with $n+1$ vertices.
    Finally, we also give the characterizations of graphs with at most two trivial distance ideals over $\mathbb{Q}[X]$, and the graphs with at most two trivial distance univariate ideals.
\end{abstract}

\noindent
\textbf{Keywords:}
distance ideals, forbidden induced subgraph, distance matrix, Graham-Pollak-Lovász formula.

\noindent
\textbf{MSC:} 13F70, 05C25, 05C50, 05E99, 13P15, 15A03, 68W30.

\section{Introduction}
In this paper, all graphs will be considered simple and connected.
Let $G=(V,E)$ be a connected graph with $n$ vertices and $X_G=\{x_u \, : \, u\in V(G)\}$ a set of indeterminates.
When the context is clear, we will only use $X$ instead of $X_G$.
The {\it distance} $d_G(u,v)$ between the vertices $u$ and $v$ is the number of edges of a shortest path between them.
We refer the reader to \cite{bondy} for any notion of graph theory not explicitly defined here.
Let $\diag(X)$ denote the diagonal matrix with indeterminates on the diagonal.
The {\it distance matrix} $D(G)$ of $G$ is the matrix with rows and columns indexed by the vertices of $G$ where the $uv$-entry is equal to $d_G(u,v)$.
Thus, the {\it generalized distance matrix} $D_X(G)$ of $G$ is the matrix with rows and columns indexed by the vertices of $G$ defined as $\diag(X)+D(G)$.
Note that we can recover the distance matrix from the generalized distance matrix by evaluating $X$ at the zero vector, that is, $D(G)=D_X(G)|_{X={\bf 0}}$.

Let $\mathfrak{R}[X]$ be the polynomial ring over a commutative ring $\mathfrak{R}$ with unity in the variables $X$.
For all $i\in[n]:=\{1,..., n\}$, the $i$-{\it th} distance ideal $I^\mathfrak{R}_i(G)$ of $G$ is the ideal $\langle \minors_i(D_X(G))\rangle$ over $\mathfrak{R}[X]$, 
where ${\rm minors}_i(D_X(G))$ is the set of determinants of the submatrices of size $i\times i$ of $D_X(G)$.

An ideal is said to be {\it unit} or {\it trivial} if it is equal to $\langle1\rangle$, that is, the entire polynomial ring $\mathfrak{R}[X]=\langle 1 \rangle$. 
Let $\Phi_\mathfrak{R}(G)$ denote the maximum integer $i$ for which $I^\mathfrak{R}_i(G)$ is trivial.
Let $\Lambda^\mathfrak{R}_k$ denote the family of connected graphs with at most $k$ trivial distance ideals over $\mathfrak{R}[X]$.
Note that every connected graph with at least two vertices has at least one trivial distance ideal.

In general, the distance ideals are not monotone under taking induced subgraphs.
However, we have the following results.

\begin{lemma}\cite{at}\label{lemma:inducemonotone}
Let $H$ be an induced subgraph of $G$ such that for every pair of vertices $v_i,v_j$ in $V(H)$, there is a shortest path from $v_i$ to $v_j$ in $G$ which lies entirely in $H$.
Then, $I^\mathfrak{R}_i(H)\subseteq I^\mathfrak{R}_i(G)$  for all $i\leq |V(H)|$.
\end{lemma}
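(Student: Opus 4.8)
The plan is to reduce the ideal containment to a single structural fact: under the stated hypothesis the generalized distance matrix of $H$ is a principal submatrix of that of $G$. The first and only substantive step is to show that distances are preserved, i.e. $d_H(u,v) = d_G(u,v)$ for every pair $u,v \in V(H)$. One inequality is automatic, since $H$ is a subgraph of $G$ and hence any $u$-$v$ path in $H$ is also a $u$-$v$ path in $G$, giving $d_G(u,v) \le d_H(u,v)$. For the reverse inequality I would invoke the hypothesis directly: there is a shortest $u$-$v$ path in $G$ that lies entirely in $H$, and this path witnesses $d_H(u,v) \le d_G(u,v)$. Hence the two distances coincide.

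Next I would translate this into matrix language. Since $X_H = \{x_u : u \in V(H)\} \subseteq X_G$, we may regard $\mathfrak{R}[X_H]$ as a subring of $\mathfrak{R}[X_G]$, which is what makes the claimed containment of ideals meaningful across the two polynomial rings. The distance-preservation step shows that $D(H)$ is exactly the principal submatrix of $D(G)$ with rows and columns indexed by $V(H)$; adding $\diag(X_H)$ to both sides, the matrix $D_{X_H}(H) = \diag(X_H) + D(H)$ is the principal submatrix of $D_{X_G}(G) = \diag(X_G) + D(G)$ indexed by $V(H)$, with entries matching verbatim.

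Finally I would conclude the ideal containment. Fix $i \le |V(H)|$. Any $i\times i$ submatrix of $D_{X_H}(H)$ is obtained by selecting $i$ rows and $i$ columns from $V(H) \subseteq V(G)$; because $D_{X_H}(H)$ sits inside $D_{X_G}(G)$ as a principal submatrix with identical entries, this same selection gives an $i\times i$ submatrix of $D_{X_G}(G)$ with the same determinant. Thus $\minors_i(D_{X_H}(H)) \subseteq \minors_i(D_{X_G}(G))$ as subsets of $\mathfrak{R}[X_G]$, and passing to generated ideals yields $I^{\mathfrak{R}}_i(H) \subseteq I^{\mathfrak{R}}_i(G)$. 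The argument is essentially bookkeeping once distances are shown to agree; the one place that genuinely needs the hypothesis is the reverse inequality $d_H(u,v) \le d_G(u,v)$, for without the ``shortest path stays in $H$'' condition the induced subgraph could have strictly larger internal distances and the matrix entries would fail to match, breaking the submatrix identification.
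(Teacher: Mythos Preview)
Your argument is correct and is exactly the natural proof: the hypothesis forces $d_H(u,v)=d_G(u,v)$ for all $u,v\in V(H)$, so $D_{X_H}(H)$ is the principal submatrix of $D_{X_G}(G)$ indexed by $V(H)$, and every $i\times i$ minor of the former is literally an $i\times i$ minor of the latter. The paper does not supply its own proof of this lemma---it is quoted from \cite{at}---so there is nothing to compare against, but your write-up is the standard justification and would serve perfectly well.
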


In particular, we have the following.

\begin{lemma}\cite{at}\label{lemma:distance2inducedmonotone}
Let $H$ be an induced subgraph of $G$ with diameter equal to 2, that is, the distance between any pair of vertices in $H$ is at most 2.
Then $I^\mathfrak{R}_i(H)\subseteq I^\mathfrak{R}_i(G)$  for all $i\leq |V(H)|$.
\end{lemma}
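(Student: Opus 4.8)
The plan is to deduce this statement directly from Lemma~\ref{lemma:inducemonotone} by checking that an induced subgraph $H$ of diameter $2$ automatically satisfies the geodesic-preservation hypothesis of that lemma. Concretely, I would show that for every pair of vertices $u,v\in V(H)$ there is a shortest path of $G$ joining them that stays inside $H$; equivalently, that $d_G(u,v)=d_H(u,v)$ with this common distance realized by a path lying in $H$. Once this is established, the conclusion $I^\mathfrak{R}_i(H)\subseteq I^\mathfrak{R}_i(G)$ for all $i\le |V(H)|$ follows immediately by invoking Lemma~\ref{lemma:inducemonotone}.

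To verify the hypothesis I would split into the two possible nonzero distances in $H$. If $d_H(u,v)=1$, then $uv\in E(H)\subseteq E(G)$, so the single edge $uv$ is a shortest path of $G$ lying entirely in $H$ and $d_G(u,v)=1$. If $d_H(u,v)=2$, pick an internal vertex $w\in V(H)$ with $u\text{--}w\text{--}v$ a path of $H$; since both of its edges belong to $G$, this is a length-$2$ walk in $G$, giving the upper bound $d_G(u,v)\le 2$.

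The one place where the assumption on $H$ is genuinely used — and the only step requiring any care — is the matching lower bound $d_G(u,v)\ge 2$ in the second case. Here I would invoke that $H$ is an \emph{induced} subgraph: since $u$ and $v$ are nonadjacent in $H$, they are nonadjacent in $G$ as well, so $d_G(u,v)\ge 2$. Combined with the upper bound this yields $d_G(u,v)=2=d_H(u,v)$, realized by the path $u\text{--}w\text{--}v$ inside $H$. Thus every pair of vertices of $H$ is joined by a $G$-geodesic contained in $H$, the hypothesis of Lemma~\ref{lemma:inducemonotone} holds, and the claim follows. I do not anticipate any serious obstacle: the diameter-$2$ assumption is precisely what keeps the induced condition — which a priori controls only adjacency — strong enough to pin down all distances, so the whole argument reduces to the short case analysis above.
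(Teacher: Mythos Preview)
Your proposal is correct and is exactly the approach the paper has in mind: the statement is presented as an immediate special case of Lemma~\ref{lemma:inducemonotone} (``In particular, we have the following''), and your case analysis on $d_H(u,v)\in\{1,2\}$, using that $H$ is induced to secure $d_G(u,v)\ge 2$ in the second case, is precisely the verification needed.
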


Distance-hereditary graphs are another related family, defined by Howorka in \cite{H77}.
A graph $G$ is {\it distance-hereditary} if for each connected induced subgraph $H$ of $G$ and every pair $u$ and $v$ of vertices in $H$, $d_H(u,v)=d_G(u,v)$.

\begin{lemma}\label{lem:dishere}
If $H$ is a connected induced subgraph of a distance-hereditary graph $G$, then $I^\mathfrak{R}_i(H)\subseteq I^\mathfrak{R}_i(G)$  for all $i\leq |V(H)|$.    
\end{lemma}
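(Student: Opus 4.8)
The plan is to reduce this to Lemma~\ref{lemma:inducemonotone}, which already establishes the desired ideal containment whenever every pair of vertices of $H$ is joined by a shortest path of $G$ lying entirely in $H$. So the entire task reduces to verifying that hypothesis for a connected induced subgraph $H$ of a distance-hereditary graph $G$.

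First I would invoke the definition of distance-hereditary directly: since $H$ is a connected induced subgraph of $G$, we have $d_H(u,v)=d_G(u,v)$ for every pair $u,v\in V(H)$. Next I would fix any two vertices $v_i,v_j\in V(H)$. Because $H$ is connected, there is a shortest path $P$ from $v_i$ to $v_j$ within $H$; by construction $P$ uses only vertices and edges of $H$, so it lies entirely in $H$, and its length equals $d_H(v_i,v_j)$.

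The key step is to observe that $P$ is simultaneously a shortest path in $G$: its length is $d_H(v_i,v_j)=d_G(v_i,v_j)$, which is exactly the distance in $G$, so no path of $G$ can be shorter. Hence $P$ witnesses the hypothesis of Lemma~\ref{lemma:inducemonotone} for the pair $v_i,v_j$. Since $v_i,v_j$ were arbitrary, that hypothesis holds for all pairs, and Lemma~\ref{lemma:inducemonotone} immediately yields $I^\mathfrak{R}_i(H)\subseteq I^\mathfrak{R}_i(G)$ for all $i\le|V(H)|$.

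I do not anticipate a genuine obstacle here: the definition of distance-hereditary is tailored precisely so that shortest paths within $H$ agree in length with shortest paths of $G$, so the argument is essentially an unwinding of definitions that feeds into Lemma~\ref{lemma:inducemonotone}. The only point deserving a little care is that an induced subgraph inherits its edges from $G$, so that a path of $H$ is literally a path of $G$; this ensures $P$ can be viewed both as a path lying entirely in $H$ and as a legitimate path of $G$ realizing $d_G(v_i,v_j)$.
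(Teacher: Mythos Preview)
Your proposal is correct and follows exactly the approach implicit in the paper: the lemma is stated immediately after Lemma~\ref{lemma:inducemonotone} and the definition of distance-hereditary, with no explicit proof, precisely because it is the routine deduction you describe. Your only addition is spelling out that a shortest path in $H$ is automatically a path in $G$ of the same length, which is the one-line verification the paper leaves to the reader.
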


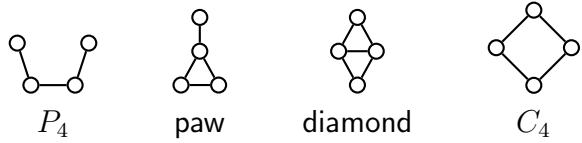
\begin{figure}[ht]
\begin{center}
\begin{tabular}{c@{\extracolsep{10mm}}c@{\extracolsep{10mm}}c@{\extracolsep{10mm}}c@{\extracolsep{10mm}}c}
	\begin{tikzpicture}[scale=.5,thick]
	\tikzstyle{every node}=[minimum width=0pt, inner sep=2pt, circle]
	\draw (126+36:1) node (v1) [draw] {};
	\draw (198+36:1) node (v2) [draw] {};
	\draw (270+36:1) node (v3) [draw] {};
	\draw (342+36:1) node (v4) [draw] {};
	\draw (v1) -- (v2);
	\draw (v2) -- (v3);
	\draw (v4) -- (v3);
	\end{tikzpicture}
&
	\begin{tikzpicture}[scale=.5,thick]
	\tikzstyle{every node}=[minimum width=0pt, inner sep=2pt, circle]
	\draw (-.5,-.9) node (v1) [draw] {};
	\draw (.5,-.9) node (v2) [draw] {};
	\draw (0,0) node (v3) [draw] {};
	\draw (0,.9) node (v4) [draw] {};
	\draw (v1) -- (v2);
	\draw (v1) -- (v3);
	\draw (v2) -- (v3);
	\draw (v3) -- (v4);
	\end{tikzpicture}
&
	\begin{tikzpicture}[scale=.5,thick]
	\tikzstyle{every node}=[minimum width=0pt, inner sep=2pt, circle]
	\draw (-.5,0) node (v2) [draw] {};
	\draw (0,-.9) node (v1) [draw] {};
	\draw (.5,0) node (v3) [draw] {};
	\draw (0,.9) node (v4) [draw] {};
	\draw (v1) -- (v2);
	\draw (v1) -- (v3);
	\draw (v2) -- (v3);
	\draw (v2) -- (v4);
	\draw (v3) -- (v4);
	\end{tikzpicture}
&
    \begin{tikzpicture}[scale=.5,thick]
	\tikzstyle{every node}=[minimum width=0pt, inner sep=2pt, circle]
	\draw (0:1) node (v1) [draw] {};
	\draw (90:1) node (v2) [draw] {};
	\draw (180:1) node (v3) [draw] {};
	\draw (270:1) node (v4) [draw] {};
	\draw (v1) -- (v2) -- (v3) -- (v4) -- (v1);
	\end{tikzpicture}
\\
$P_4$
&
$\sf{paw}$
&
$\sf{diamond}$
&
$C_4$
\end{tabular}
\end{center}
\caption{The graphs $P_4$, $\sf{paw}$, $\sf{diamond}$ and $C_4$.}
\label{fig:forbiddendistance1}
\end{figure}

A graph $G$ is {\it forbidden} for $\Lambda_k^\mathfrak{R}$ if the $(k+1)$-th distance ideal of $G$ is trivial.
In addition, a forbidden graph $H$ for $\Lambda_k^\mathfrak{R}$ is {\it minimal} if $H$ does not contain a connected forbidden graph for $\Lambda_k^\mathfrak{R}$ as induced subgraph, and any graph $G$ containing $H$ as induced subgraph has $G$ is forbidden for $\Lambda_k^\mathfrak{R}$.
The set of minimal forbidden graphs for $\Lambda_k^\mathfrak{R}$ will be denoted by ${\sf Forb}_k^\mathfrak{R}$.
Given a family $\mathcal{G}$ of graphs, a graph $G$ is called $\mathcal{G}${\it -free} if no induced subgraph of $G$ is isomorphic to a member of $\mathcal{G}$.

In \cite{at}, $\Lambda_1^\mathbb{Z}$ was characterized as the $\{P_4,\sf{paw},\sf{diamond}\}$-free graphs; that consists of complete graphs or complete bipartite graphs. 
Also in \cite{at}, $\Lambda_1^\mathbb{Q}$ was characterized as: $\{P_4,{\sf{paw}},{\sf{diamond}}, C_4\}$-free graphs that are star graphs or complete graphs. 
And in \cite{directedideals}, the concept of {\it pattern} was introduced to characterize the digraphs with only one trivial distance ideal over ${\mathbb Z}[X]$.

Some of these forbidden graphs appear in other contexts.
A graph is {\it trivially perfect} if for every induced subgraph the stability number equals the number of maximal cliques.
In \cite[Theorem 2]{G}, Golumbic characterized trivially perfect graphs as $\{P_4,C_4\}$-free graphs. There are other equivalent characterizations of this family, see \cite{CCY,Rubio}.
Therefore, graphs in $\Lambda_1^\mathbb{Q}$ are a subclass of trivially perfect graphs.

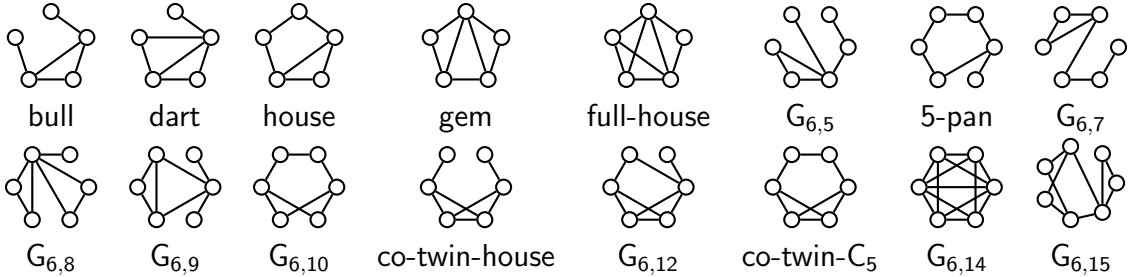
\begin{figure}[ht]
\begin{center}
\begin{tabular}{ccccccccc}
\begin{tikzpicture}[scale=.5,thick]
	\tikzstyle{every node}=[minimum width=0pt, inner sep=2pt, circle]
    \draw (54+36:1) node (v0) [draw] {};
	\draw (126+36:1) node (v1) [draw] {};
	\draw (198+36:1) node (v3) [draw] {};
	\draw (270+36:1) node (v2) [draw] {};
	\draw (342+36:1) node (v4) [draw] {};
    \draw (v0) -- (v4);
    \draw (v1) -- (v3);
    \draw (v2) -- (v3);
    \draw (v2) -- (v4);
    \draw (v3) -- (v4);
\end{tikzpicture}
&
\begin{tikzpicture}[scale=.5,thick]
	\tikzstyle{every node}=[minimum width=0pt, inner sep=2pt, circle]
    \draw (54+36:1) node (v0) [draw] {};
	\draw (126+36:1) node (v1) [draw] {};
	\draw (198+36:1) node (v3) [draw] {};
	\draw (270+36:1) node (v2) [draw] {};
	\draw (342+36:1) node (v4) [draw] {};
    \draw (v0) -- (v4);
    \draw (v1) -- (v3);
    \draw (v1) -- (v4);
    \draw (v2) -- (v3);
    \draw (v2) -- (v4);
    \draw (v3) -- (v4);
\end{tikzpicture}
&
\begin{tikzpicture}[scale=.5,thick]
	\tikzstyle{every node}=[minimum width=0pt, inner sep=2pt, circle]
    \draw (54+36:1) node (v0) [draw] {};
	\draw (126+36:1) node (v1) [draw] {};
	\draw (198+36:1) node (v3) [draw] {};
	\draw (270+36:1) node (v2) [draw] {};
	\draw (342+36:1) node (v4) [draw] {};
    \draw (v0) -- (v1);
    \draw (v0) -- (v4);
    \draw (v1) -- (v3);
    \draw (v2) -- (v3);
    \draw (v2) -- (v4);
    \draw (v3) -- (v4);
\end{tikzpicture}
&
\begin{tikzpicture}[scale=.5,thick]
	\tikzstyle{every node}=[minimum width=0pt, inner sep=2pt, circle]
    \draw (54+36:1) node (v4) [draw] {};
	\draw (126+36:1) node (v1) [draw] {};
	\draw (198+36:1) node (v2) [draw] {};
	\draw (270+36:1) node (v3) [draw] {};
	\draw (342+36:1) node (v0) [draw] {};
    \draw (v0) -- (v3);
    \draw (v0) -- (v4);
    \draw (v1) -- (v2);
    \draw (v1) -- (v4);
    \draw (v2) -- (v3);
    \draw (v2) -- (v4);
    \draw (v3) -- (v4);
\end{tikzpicture}
&
\begin{tikzpicture}[scale=.5,thick]
	\tikzstyle{every node}=[minimum width=0pt, inner sep=2pt, circle]
    \draw (54+36:1) node (v4) [draw] {};
	\draw (126+36:1) node (v1) [draw] {};
	\draw (198+36:1) node (v2) [draw] {};
	\draw (270+36:1) node (v3) [draw] {};
	\draw (342+36:1) node (v0) [draw] {};
    \draw (v0) -- (v3);
    \draw (v0) -- (v4);
    \draw (v1) -- (v2);
    \draw (v1) -- (v3);
    \draw (v1) -- (v4);
    \draw (v2) -- (v3);
    \draw (v2) -- (v4);
    \draw (v3) -- (v4);
\end{tikzpicture}
&
\begin{tikzpicture}[scale=.5,thick]
	\tikzstyle{every node}=[minimum width=0pt, inner sep=2pt, circle]
    \draw (60:1) node (v0) [draw] {};
	\draw (120:1) node (v1) [draw] {};
	\draw (180:1) node (v2) [draw] {};
	\draw (240:1) node (v3) [draw] {};
	\draw (300:1) node (v5) [draw] {};
    \draw (0:1) node (v4) [draw] {};
    \draw (v0) -- (v4);
    \draw (v1) -- (v5);
    \draw (v2) -- (v3);
    \draw (v2) -- (v5);
    \draw (v3) -- (v5);
    \draw (v4) -- (v5);
\end{tikzpicture}
&
\begin{tikzpicture}[scale=.5,thick]
	\tikzstyle{every node}=[minimum width=0pt, inner sep=2pt, circle]
    \draw (60:1) node (v4) [draw] {};
	\draw (120:1) node (v1) [draw] {};
	\draw (180:1) node (v2) [draw] {};
	\draw (240:1) node (v3) [draw] {};
	\draw (300:1) node (v0) [draw] {};
    \draw (0:1) node (v5) [draw] {};
    \draw (v0) -- (v5);
    \draw (v1) -- (v2);
    \draw (v1) -- (v4);
    \draw (v2) -- (v3);
    \draw (v3) -- (v5);
    \draw (v4) -- (v5);
\end{tikzpicture}
&
\begin{tikzpicture}[scale=.5,thick]
	\tikzstyle{every node}=[minimum width=0pt, inner sep=2pt, circle]
    \draw (60:1) node (v5) [draw] {};
	\draw (120:1) node (v1) [draw] {};
	\draw (180:1) node (v2) [draw] {};
	\draw (240:1) node (v3) [draw] {};
	\draw (300:1) node (v4) [draw] {};
    \draw (0:1) node (v0) [draw] {};
    \draw (v0) -- (v4);
    \draw (v1) -- (v2);
    \draw (v1) -- (v5);
    \draw (v2) -- (v5);
    \draw (v3) -- (v4);
    \draw (v3) -- (v5);
\end{tikzpicture}\\
\Ga & \Gb & \Gc & \Gd & \Ge & $\Gf$ & \Gg & $\Gh$\\

\begin{tikzpicture}[scale=.5,thick]
	\tikzstyle{every node}=[minimum width=0pt, inner sep=2pt, circle]
    \draw (60:1) node (v0) [draw] {};
	\draw (120:1) node (v5) [draw] {};
	\draw (180:1) node (v2) [draw] {};
	\draw (240:1) node (v3) [draw] {};
	\draw (300:1) node (v4) [draw] {};
    \draw (0:1) node (v1) [draw] {};
    \draw (v0) -- (v5);
    \draw (v1) -- (v4);
    \draw (v1) -- (v5);
    \draw (v2) -- (v3);
    \draw (v2) -- (v5);
    \draw (v3) -- (v5);
    \draw (v4) -- (v5);
\end{tikzpicture}
&
\begin{tikzpicture}[scale=.5,thick]
	\tikzstyle{every node}=[minimum width=0pt, inner sep=2pt, circle]
    \draw (60:1) node (v0) [draw] {};
	\draw (120:1) node (v4) [draw] {};
	\draw (180:1) node (v2) [draw] {};
	\draw (240:1) node (v3) [draw] {};
	\draw (300:1) node (v1) [draw] {};
    \draw (0:1) node (v5) [draw] {};
    \draw (v0) -- (v5);
    \draw (v1) -- (v5);
    \draw (v2) -- (v3);
    \draw (v2) -- (v4);
    \draw (v3) -- (v4);
    \draw (v3) -- (v5);
    \draw (v4) -- (v5);
\end{tikzpicture}
&
\begin{tikzpicture}[scale=.5,thick]
	\tikzstyle{every node}=[minimum width=0pt, inner sep=2pt, circle]
    \draw (60:1) node (v0) [draw] {};
	\draw (120:1) node (v1) [draw] {};
	\draw (180:1) node (v4) [draw] {};
	\draw (240:1) node (v3) [draw] {};
	\draw (300:1) node (v2) [draw] {};
    \draw (0:1) node (v5) [draw] {};
    \draw (v0) -- (v1);
    \draw (v0) -- (v5);
    \draw (v1) -- (v4);
    \draw (v2) -- (v4);
    \draw (v2) -- (v5);
    \draw (v3) -- (v4);
    \draw (v3) -- (v5);
\end{tikzpicture}
&
\begin{tikzpicture}[scale=.5,thick]
	\tikzstyle{every node}=[minimum width=0pt, inner sep=2pt, circle]
    \draw (60:1) node (v0) [draw] {};
	\draw (120:1) node (v1) [draw] {};
	\draw (180:1) node (v4) [draw] {};
	\draw (240:1) node (v3) [draw] {};
	\draw (300:1) node (v2) [draw] {};
    \draw (0:1) node (v5) [draw] {};
    \draw (v0) -- (v5);
    \draw (v1) -- (v4);
    \draw (v2) -- (v3);
    \draw (v2) -- (v4);
    \draw (v2) -- (v5);
    \draw (v3) -- (v4);
    \draw (v3) -- (v5);
\end{tikzpicture}
&
\begin{tikzpicture}[scale=.5,thick]
	\tikzstyle{every node}=[minimum width=0pt, inner sep=2pt, circle]
    \draw (60:1) node (v0) [draw] {};
	\draw (120:1) node (v1) [draw] {};
	\draw (180:1) node (v4) [draw] {};
	\draw (240:1) node (v3) [draw] {};
	\draw (300:1) node (v2) [draw] {};
    \draw (0:1) node (v5) [draw] {};
    \draw (v0) -- (v5);
    \draw (v1) -- (v4);
    \draw (v1) -- (v5);
    \draw (v2) -- (v3);
    \draw (v2) -- (v4);
    \draw (v2) -- (v5);
    \draw (v3) -- (v4);
    \draw (v3) -- (v5);
\end{tikzpicture}
&
\begin{tikzpicture}[scale=.5,thick]
	\tikzstyle{every node}=[minimum width=0pt, inner sep=2pt, circle]
    \draw (60:1) node (v0) [draw] {};
	\draw (120:1) node (v1) [draw] {};
	\draw (180:1) node (v4) [draw] {};
	\draw (240:1) node (v3) [draw] {};
	\draw (300:1) node (v2) [draw] {};
    \draw (0:1) node (v5) [draw] {};
    \draw (v0) -- (v1);
    \draw (v0) -- (v5);
    \draw (v1) -- (v4);
    \draw (v2) -- (v3);
    \draw (v2) -- (v4);
    \draw (v2) -- (v5);
    \draw (v3) -- (v4);
    \draw (v3) -- (v5);
\end{tikzpicture}
&
\begin{tikzpicture}[scale=.5,thick]
	\tikzstyle{every node}=[minimum width=0pt, inner sep=2pt, circle]
    \draw (60:1) node (v2) [draw] {};
	\draw (120:1) node (v1) [draw] {};
	\draw (180:1) node (v4) [draw] {};
	\draw (240:1) node (v3) [draw] {};
	\draw (300:1) node (v0) [draw] {};
    \draw (0:1) node (v5) [draw] {};
    \draw (v0) -- (v2);
    \draw (v0) -- (v3);
    \draw (v0) -- (v4);
    \draw (v0) -- (v5);
    \draw (v1) -- (v2);
    \draw (v1) -- (v3);
    \draw (v1) -- (v4);
    \draw (v1) -- (v5);
    \draw (v2) -- (v4);
    \draw (v2) -- (v5);
    \draw (v3) -- (v4);
    \draw (v3) -- (v5);
    \draw (v4) -- (v5);
\end{tikzpicture}
&
\begin{tikzpicture}[scale=.5,thick]
	\tikzstyle{every node}=[minimum width=0pt, inner sep=2pt, circle]
    \draw (0:1) node (v0) [draw] {};
	\draw (360/7:1) node (v1) [draw] {};
	\draw (2*360/7:1) node (v4) [draw] {};
	\draw (3*360/7:1) node (v3) [draw] {};
	\draw (4*360/7:1) node (v2) [draw] {};
    \draw (5*360/7:1) node (v5) [draw] {};
    \draw (6*360/7:1) node (v6) [draw] {};
    \draw (v0) -- (v1);
    \draw (v0) -- (v6);
    \draw (v1) -- (v6);
    \draw (v2) -- (v4);
    \draw (v2) -- (v5);
    \draw (v3) -- (v4);
    \draw (v3) -- (v5);
    \draw (v4) -- (v6);
    \draw (v5) -- (v6);
\end{tikzpicture}\\
$\Gi$ & $\Gj$ & $\Gk$ & \Gl & $\Gm$ & $\Gn$ & $\Go$ & $\Gq$\\
\end{tabular}
\end{center}
\caption{Some minimal forbidden graphs for graphs with 2 trivial distance ideals over $\mathbb{Z}[X]$.}
\label{fig:for2}
\end{figure}

In~\cite{alfaro2}, the family $\Lambda_2^\mathbb{Z}$ of graphs with at most two trivial distance ideals over $\mathbb{Z}[X]$ was explored.
In particular, there was found an infinite number of minimal forbidden graphs for $\Lambda_2^\mathbb{Z}$.
Let $\cal F$ be the set of 16 graphs shown in Figure~\ref{fig:for2}.
Specifically, in~\cite{alfaro2}, it was proved that graphs in $\Lambda_2^\mathbb{Z}$ are $\{\mathcal{F},\text{\sf odd-holes}_7\}$-free graphs, where $\text{{\sf odd-holes}}_7$ are cycles of odd length greater or equal than 7.

\begin{lemma}\cite[Theorem 23]{alfaro2}
    Graphs in $\Lambda_2^\mathbb{Z}$ are $\{\mathcal{F},\text{\sf odd-holes}_7\}$-free.
\end{lemma}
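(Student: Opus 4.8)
The plan is to prove the contrapositive: if a connected graph $G$ contains an induced subgraph $H$ isomorphic to a member of $\mathcal{F}$ or to an odd hole of length at least $7$, then $I^\mathbb{Z}_3(G)=\langle 1\rangle$, so that $\Phi_\mathbb{Z}(G)\ge 3$ and $G\notin\Lambda_2^\mathbb{Z}$. The common engine is the observation that every $3\times 3$ minor of the principal submatrix of $D_X(G)$ indexed by $V(H)$ is itself a $3\times 3$ minor of $D_X(G)$, hence lies in $I^\mathbb{Z}_3(G)$. On the diagonal this submatrix carries the indeterminates $x_u$, while its off-diagonal $uv$-entry equals $d_G(u,v)$, which is $1$ exactly when $uv\in E(H)$ and otherwise satisfies $2\le d_G(u,v)\le d_H(u,v)$ (the upper bound because the $H$-geodesic between $u$ and $v$ survives in $G$). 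Thus, up to the finitely many unknown distances attached to the non-adjacent pairs, we know the submatrix explicitly, and it suffices to produce an integer combination of its $3\times 3$ minors equal to $1$.

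For the finite family $\mathcal{F}$ I would first note that each of the sixteen graphs has diameter at most $3$. When $H$ has diameter $2$ every non-adjacent pair already satisfies $d_G(u,v)=2=d_H(u,v)$, so the submatrix is exactly $D_X(H)$ and Lemma~\ref{lemma:distance2inducedmonotone} gives $I^\mathbb{Z}_3(H)\subseteq I^\mathbb{Z}_3(G)$; it then remains to check $I^\mathbb{Z}_3(H)=\langle 1\rangle$ by exhibiting, for each such $H$, two off-diagonal (hence integer-valued) $3\times 3$ minors whose values are coprime. For the few members of diameter $3$ --- for instance $\Ga$ and $\Gg$ --- only the pairs with $d_H(u,v)=3$ are ambiguous, each contributing a value in $\{2,3\}$; this yields a finite list of candidate submatrices, and for every one of them I would again display minors generating $\langle 1\rangle$. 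Since $\mathcal{F}$ is finite and each case involves a bounded number of small determinants, this part is a finite verification.

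The genuinely infinite part is the family of odd holes, and I would split it according to whether the induced cycle $C_\ell$ ($\ell$ odd, $\ell\ge 7$) is isometric in $G$. If it is isometric, then $d_G=d_{C_\ell}$ on $V(C_\ell)$, the hypothesis of Lemma~\ref{lemma:inducemonotone} holds (a shorter arc is a geodesic lying inside $C_\ell$), and $I^\mathbb{Z}_3(C_\ell)\subseteq I^\mathbb{Z}_3(G)$. Triviality of $I^\mathbb{Z}_3(C_\ell)$ then follows from the circulant structure of $D(C_\ell)$: two disjoint-index $3\times 3$ minors can be written in closed form as functions of $\ell$, and one checks that their integer values are coprime for every odd $\ell\ge 7$ (for $\ell=7$ one may take the minors on rows $\{0,1,2\}$, columns $\{3,4,5\}$ and on rows $\{0,1,3\}$, columns $\{2,4,5\}$, whose values are $2$ and $5$).

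The hardest step --- and the one I expect to be the main obstacle --- is the non-isometric case, where some pair $u,v\in V(C_\ell)$ has $d_G(u,v)<d_{C_\ell}(u,v)$ and Lemma~\ref{lemma:inducemonotone} no longer applies. Here I would choose such a pair minimizing $d_G(u,v)$, take a $G$-geodesic $Q$ between them, and close it with an arc $A$ of $C_\ell$; since $\ell$ is odd the two arcs have opposite parities, so one can select $A$ so that $A\cup Q$ has a prescribed parity and is strictly shorter than $\ell$. Extracting a chordless subcycle then produces either a shorter odd hole, allowing an induction down to the base length $7$, or a short configuration that must contain one of the graphs of $\mathcal{F}$ (a $C_5$ together with the extra vertices of $Q$ forming, e.g., $\Gc$, $\Gd$, $\Gg$ or $\Gn$). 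Making this dichotomy exhaustive is the delicate point, and it is precisely here that the specific list $\mathcal{F}$ must be rich enough to absorb every degenerate shortcut; the bulk of the work is the careful case analysis on how $Q$ attaches to $C_\ell$.
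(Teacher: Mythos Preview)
The paper does not prove this lemma at all; it is quoted as \cite[Theorem~23]{alfaro2}, so there is no in-paper argument to compare against. Your proposal should therefore be assessed as an independent attempt at the result of \cite{alfaro2}.

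Your treatment of the finite family $\mathcal{F}$ is essentially right in spirit and mirrors the case-by-case method the present paper uses elsewhere (e.g.\ for $P_5$ in Proposition~\ref{prop:P5 is forb}): diameter-$2$ members go through Lemma~\ref{lemma:distance2inducedmonotone}, and for the others one enumerates the finitely many admissible values of the ambiguous $G$-distances. One small correction: not all sixteen graphs have diameter at most $3$; for instance $\Gh$ and $\Gl$ have diameter $4$, so a few more pairs carry an unknown distance in $\{2,3,4\}$. This does not break the approach---the verification remains finite---but your bookkeeping needs adjusting.

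The real gap is in the non-isometric odd-hole case, and you already sense it. Two concrete problems with the sketched dichotomy: (i)~the induced odd cycle you extract from the closed walk $A\cup Q$ may well have length $3$ or $5$, and neither $C_3$ nor $C_5$ lies in $\mathcal{F}$; you assert that the $C_5$ together with ``the extra vertices of $Q$'' must assemble into some member of $\mathcal{F}$, but you give no mechanism for this, and it is precisely the analysis of how internal vertices of $Q$ can attach to the arc $A$ that has to be done rather than invoked; (ii)~even when a shorter induced odd hole of length $\ge 7$ appears, it may again be non-isometric, so your induction must eventually bottom out in case~(i). Until you can close that analysis---equivalently, show that an $\mathcal{F}$-free graph containing an induced odd hole of length $\ge 7$ must contain an isometric one---the argument is incomplete.
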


In \cite[Theorem 3]{HW}, it was proved that the distance matrix of trees has exactly 2 invariant factors equal to 1, in Section~\ref{sec:SNF_DI} we will explain this point.
Therefore,
\[
\textsf{trees} \subseteq \Lambda_2^\mathbb{Z} \subseteq \{\mathcal{F},\text{\sf odd-holes}_7\} \text{-free graphs}.
\]

Among the forbidden graphs for $\Lambda_2^\mathbb{Z}$, there are several graphs studied in other contexts, like $\Ga$ and $\text{{\sf odd-holes}}_7$ studied in \cite{CI,CII} and \cite{CS}, respectively.
Distance-hereditary graphs are $\{\Gc,\Gd,{\sf domino}, {\sf holes}_5\}$-free graphs, where ${\sf holes}_5$ are cycles of length greater than or equal to 5, which intersect with $\Lambda_2^\mathbb{Z}$.
Another related family is the {\sf 3-leaf powers} that was characterized in \cite{DGHN} as $\{\Ga,\Gb,\Gd\}$-free chordal graphs.

Previously, an analogous notion to the distance ideals but for the adjacency and Laplacian matrices was explored.
These were called {\it critical ideals}, see \cite{corrval}.
They have been explored in \cite{alfacorrval,alfaval,alfaval1,AVV}, and in \cite{alfaro,alflin} new connections have been found in contexts different from the Smith group or the Sandpile group, such as the zero-forcing number and the minimum rank of a graph.
In this setting, the set of forbidden graphs for the family with at most $k$ trivial critical ideals is conjectured to be finite; see \cite[Conjecture 5.5]{alfaval}.
It is interesting that for distance ideals this is not true. 
On the other hand, univariate ideals have been studied as well; see \cite{distinguish,absv1} for instance.

In this paper, we complete the work started in \cite{alfaro2} by proving that $\Lambda_2^\mathbb{Z}$ consists of the $\{\mathcal{F},\text{\sf odd-holes}_7\}$-free graphs, and we will give a description of these graphs.
In particular, we will prove that the {\sf bipartite graphs} are in $\Lambda_2^\mathbb{Z}$, where {\sf bipartite graphs} refer only to the connected ones.
We will do this by proving that the third invariant factor of the distance matrix of {\sf bipartite graphs} is not unity.
In fact, we will be able to prove that the third invariant factor of the distance matrix of {\sf bipartite graphs} is an even number.
This has an interesting additional consequence, the determinant of the distance matrix of bipartite graphs is even.
Therefore, this suggests that it is possible to extend Graham-Pollak-Lovász celebrated formula $\det(D(T_{n+1}))=(-1)^nn2^{n-1}$ \cite{GP} to bipartite graphs, as well as, Hou-Woo \cite{HW} result stating that the Smith normal form of $D(T_{n+1})$ is $\sf{I}_2\oplus 2\sf{I}_{n-2}\oplus (2n)$, for any tree $T_{n+1}$ with $n+1$ vertices. 

The manuscript is organized as follows.
Section~\ref{sec:SNF_DI} provides a comprehensive introduction to the Smith Normal Form (SNF) of integer matrices, paying particular attention to the SNF of distance matrices. 
Proposition~\ref{prop:eval1} serves as the bridge connecting the generalized distance matrix, and the invariant factor of the distance matrices aside, it will play a crucial role in Section~\ref{section:characterization}.
One of the central results of this article is presented in Section~\ref{sec:F odd holes free graphs}, where Theorem~\ref{theo:main theorem2} provides a description of ${\mathcal{F},\text{\sf odd-holes}_7}$-free graphs, thus extending the findings of Alfaro in~\cite{alfaro2}, note that any graph in $\Lambda_2^{\mathbb Z}$ has to be ${\mathcal{F},\text{\sf odd-holes}_7}$-free.  
The complete characterization of $\Lambda_{2}^{\mathbb{Z}}$ will be finalized in Section~\ref{section:characterization}, where it is established that the graphs identified in Theorem~\ref{theo:main theorem2} possess a nontrivial third distance ideal and thus are precisely the graphs in $\Lambda_2^{\mathbb{Z}}$, see Theorem~\ref{theo:main simple}.
Moreover, in that section, it is proved that the third invariant factor of any bipartite graph is $2$ or $4$. Therefore, their determinant is an even number. 
In Section~\ref{sec:5}, the characterization of $\Lambda_{2}^{\mathbb{Q}}$, that is, the graphs with at most two trivial distance ideals with coefficients over rational numbers, is obtained.
Finally, in Section~\ref{sec:6}, the graphs with at most two trivial distance univariate ideals are characterized.

\section{Distance ideals and Smith normal form of distance matrices}\label{sec:SNF_DI}

The Smith normal form (SNF) has been useful in understanding the algebraic properties of combinatorial objects; see \cite{stanley}.
For example, computing the Smith normal form of the adjacency or Laplacian matrix is a standard technique used to determine the Smith group and the critical group of a graph; see \cite{alfaval0,merino,rushanan}.
In fact, Stanley recently commented on the role of the SNF in combinatorics \cite{interview}: ``{\it Although I enjoy these SNF problems, they seem to be mostly problems in algebra, not combinatorics. An exception is the connection between the SNF of the Laplacian matrix of a graph $G$ and chip-firing on $G$... It would be great to have some further combinatorial applications of SNF}''.

Let us recall that two matrices $ M$ and $ N$ of the same order with entries over a commutative ring are {\it equivalent} if there exist unimodular matrices $ P$ and $ Q$ such that ${ M} = { PNQ}$.
Therefore, if $M$ and $N$ are equivalent, then $M$ can be transformed into $N$ by means of the following operations:
\begin{enumerate}
  \item swap any two rows or any two columns.
  \item add an integer multiple of one row to another row.
  \item add an integer multiple of one column to another column.
  \item multiply any row or column by $\pm 1$.
\end{enumerate}
Thus, if $M$ is a square integer matrix, the {\it Smith normal form} of $M$ is the unique diagonal matrix $\diag(f_1,f_2,\dots,f_r,0,\dots,0)$ equivalent to $M$, whose diagonal elements are nonnegative and satisfy that $f_i$ divides $f_{i+1}$, and $r$ is the rank of $M$. 
The elements $f_1,\dots,f_r$ are known as \emph{invariant factors} of $M$.
Kannan and Bachem found in \cite{KB} polynomial algorithms to compute the Smith normal form of an integer matrix.
An alternative way to obtain the Smith normal form is as follows.
Let $\Delta_i(M)$ denote the {\it greatest common divisor} of the $i$-minors of the integer matrix $M$, then the $i$-{\it th} invariant factor, $f_i=f_i(M)$, is equal to $\Delta_i(M)/ \Delta_{i-1}(M)$, where $\Delta_0(M)=1$.
It is known that the Smith normal form may not exist in the ring $\mathbb{Z}[X]$, this is because $\mathbb{Z}[X]$ is not a {\it principal ideal domain} (PID), for example, the ideal $\lrangle{2,x}$ is not principal.

Little is known about the Smith normal forms of distance matrices.
In \cite{HW}, the Smith normal forms of the distance matrices were determined for trees, wheels, cycles, and complements of cycles and are partially determined for complete multipartite graphs.
In \cite{BK}, the Smith normal form of the distance matrices of the
unicyclic graphs and of the wheel graph with trees attached to each
vertex were obtained.

Let ${\bf d}\in \mathbb{Z}^{n}$, the following observation will give us the relation between the Smith normal form of the integer matrix $D_X(G)|_{X={\bf d}}$ and the distance ideals of $G$.
\begin{proposition}\cite{at}\label{prop:eval1}
Let ${\bf d}\in \mathbb{Z}^{n}$.
If $f_1\mid\cdots\mid f_{r}$ are the invariant factors of the integer matrix $D_X(G)|_{X={\bf d}}$, then
\[
I_i^\mathbb{Z}(G)|_{X=\bf d}=\left\langle \prod_{j=1}^{i} f_j \right\rangle=\left\langle \Delta(D_X(G)|_{X={\bf d}}) \right\rangle\text{ for all }1\leq i\leq r.
\]
\end{proposition}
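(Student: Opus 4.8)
The plan is to realize the evaluation $X={\bf d}$ as a \emph{surjective} ring homomorphism $\varphi\colon\mathbb{Z}[X]\to\mathbb{Z}$, $x_u\mapsto d_u$, and to track how it interacts with the three objects appearing in the statement: the ideal of $i$-minors, the gcd $\Delta_i$, and the invariant factors $f_j$. By definition $I_i^\mathbb{Z}(G)|_{X={\bf d}}=\{g({\bf d}):g\in I_i^\mathbb{Z}(G)\}=\varphi\big(I_i^\mathbb{Z}(G)\big)$. First I would record the elementary fact that for a surjective ring homomorphism $\varphi$ and an ideal $I=\langle g_1,\dots,g_m\rangle$ of the source, one has $\varphi(I)=\langle\varphi(g_1),\dots,\varphi(g_m)\rangle$: indeed $\varphi(I)$ is an ideal (the image of an ideal under a surjection), and it is generated by the $\varphi(g_j)$ because $\varphi\big(\sum_j h_j g_j\big)=\sum_j\varphi(h_j)\varphi(g_j)$, where the coefficients $\varphi(h_j)$ range over all of $\mathbb{Z}$ by surjectivity. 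Applying this with $I_i^\mathbb{Z}(G)=\langle\minors_i(D_X(G))\rangle$ gives $I_i^\mathbb{Z}(G)|_{X={\bf d}}=\big\langle\{\varphi(m):m\in\minors_i(D_X(G))\}\big\rangle$.

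Next I would argue that evaluation commutes with the formation of minors. Each $i$-minor of $D_X(G)$ is the determinant of an $i\times i$ submatrix, hence a fixed polynomial expression in the entries of $D_X(G)$; since $\varphi$ is a ring homomorphism it respects this polynomial expression, so $\varphi$ applied to the minor indexed by a given pair of row and column sets equals the corresponding $i$-minor of the evaluated integer matrix $D_X(G)|_{X={\bf d}}$. Consequently $\{\varphi(m):m\in\minors_i(D_X(G))\}=\minors_i\big(D_X(G)|_{X={\bf d}}\big)$ as sets of integers, and therefore $I_i^\mathbb{Z}(G)|_{X={\bf d}}=\big\langle\minors_i(D_X(G)|_{X={\bf d}})\big\rangle$ as an ideal of $\mathbb{Z}$.

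The remaining steps take place entirely inside the PID $\mathbb{Z}$. Since every ideal of $\mathbb{Z}$ is principal and generated by the gcd of any generating set, the ideal $\langle\minors_i(D_X(G)|_{X={\bf d}})\rangle$ equals $\langle\Delta_i(D_X(G)|_{X={\bf d}})\rangle$, with $\Delta_i$ the gcd of the $i$-minors. Finally I would invoke the telescoping identity recalled before the statement, $f_j=\Delta_j/\Delta_{j-1}$ with $\Delta_0=1$, which yields $\prod_{j=1}^i f_j=\Delta_i$ and hence $\langle\Delta_i\rangle=\langle\prod_{j=1}^i f_j\rangle$. Chaining these equalities produces the asserted $I_i^\mathbb{Z}(G)|_{X={\bf d}}=\langle\prod_{j=1}^i f_j\rangle=\langle\Delta_i(D_X(G)|_{X={\bf d}})\rangle$ for all $1\le i\le r$.

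I do not expect a genuine obstacle: the content is bookkeeping of three standard facts. If anything requires care, it is the very first step, where one must use that $\varphi$ is surjective in order to pass evaluation through the generators, since for a non-surjective homomorphism the image of an ideal need not be generated by the images of a generating set. As the target is $\mathbb{Z}$ and $\varphi$ is plainly onto, this subtlety is harmless here.
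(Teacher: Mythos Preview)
The paper does not prove this proposition; it is quoted from \cite{at} without argument, so there is nothing to compare your attempt against. Your proof is correct and is the standard one: pushing the surjective evaluation homomorphism through the generators of $I_i^{\mathbb Z}(G)$, observing that evaluation commutes with the polynomial expression defining each minor, and then using that $\mathbb{Z}$ is a PID together with the telescoping identity $\Delta_i=\prod_{j\le i}f_j$ recalled just before the statement.
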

Thus, to recover $\Delta_i(D(G))$ and the invariant factors of $D(G)$ from the distance ideals, we only need to evaluate $I_i^\mathbb{Z}(G)$ at $X_G={\bf 0}$.
Therefore, if the distance ideal $I^\mathbb{Z}_i(G)$ is trivial, then $\Delta_i(D(G))$ and $f_i(D(G))$ are equal to $1$.
Equivalently, if $\Delta_i(D(G))$ and $f_i(D(G))$ are not equal to $1$, then the distance ideal $I_i^\mathbb{Z}(G)$ is not trivial.
For example, in \cite{HW}, it was shown that $\SNF(D(T_{n+1}))=\sf{I}_2\oplus 2\sf{I}_{n-2}\oplus (2n)$, thus $I_3^\mathbb{Z}(T_{n+1})$ is not trivial.
Therefore, $\textsf{trees} \subseteq \Lambda_2^\mathbb{Z}$.

The following form of Proposition~\ref{prop:eval1} will be used later.

\begin{lemma}\label{lem:deltaideal}
    If $\Delta_i(D_X(G)|_{X={\bf d}})\neq 1$ for some ${\bf d}\in \mathbb{Z}^{n}$, then the distance ideal $I_i^\mathbb{Z}(G)$ is not trivial.
\end{lemma}

Let $\phi(M)$ denote the number of invariant factors of the matrix $M$ equal to 1.
The following result is a direct consequence.

\begin{corollary}\cite{at}\label{coro:eval1}
    Let ${\bf d}\in \mathbb{Z}^{n}$.
    For any graph $G$, $\Phi(G)\leq \phi(D_X(G)|_{X={\bf d}})$.
    And, for any positive integer $k$, $\Lambda_k^\mathbb{Z}$ contains the family of graphs with $\phi(D_X(G)|_{X={\bf d}})\leq k$.
\end{corollary}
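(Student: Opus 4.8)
The statement to prove is Corollary~\ref{coro:eval1}, which asserts two things: first, that $\Phi(G) \leq \phi(D_X(G)|_{X=\mathbf{d}})$ for any $\mathbf{d} \in \mathbb{Z}^n$; and second, that $\Lambda_k^{\mathbb{Z}}$ contains the family of graphs with $\phi(D_X(G)|_{X=\mathbf{d}}) \leq k$. The corollary is advertised as a direct consequence of Proposition~\ref{prop:eval1}, so the plan is to extract both inequalities from the relationship established there between the distance ideals and the invariant factors of the evaluated generalized distance matrix.

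\textbf{Plan for the first inequality.} Recall that $\Phi(G)$ is the largest index $i$ for which the distance ideal $I_i^{\mathbb{Z}}(G)$ is trivial, and $\phi(M)$ counts the invariant factors of $M$ equal to $1$. First I would set $i = \Phi(G)$, so that $I_i^{\mathbb{Z}}(G) = \langle 1 \rangle$. Evaluating at $X = \mathbf{d}$ preserves triviality: since $1$ lies in the ideal before evaluation, it lies in $I_i^{\mathbb{Z}}(G)|_{X=\mathbf{d}}$ as well. By Proposition~\ref{prop:eval1}, writing $f_1 \mid \cdots \mid f_r$ for the invariant factors of $D_X(G)|_{X=\mathbf{d}}$, we have $I_i^{\mathbb{Z}}(G)|_{X=\mathbf{d}} = \langle \prod_{j=1}^i f_j \rangle$ (valid since $i \leq r$, which I would note holds because $\phi \leq r$ whenever the product of the first $i$ invariant factors is a unit). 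Triviality of this ideal forces $\prod_{j=1}^i f_j = \pm 1$, hence each $f_1, \dots, f_i$ is $\pm 1$, i.e.\ the first $i$ invariant factors all equal $1$. Therefore $\phi(D_X(G)|_{X=\mathbf{d}}) \geq i = \Phi(G)$, which is exactly the claimed bound.

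\textbf{Plan for the second inequality.} Suppose $G$ satisfies $\phi(D_X(G)|_{X=\mathbf{d}}) \leq k$ for some $\mathbf{d}$. I want to show $G \in \Lambda_k^{\mathbb{Z}}$, i.e.\ $G$ has at most $k$ trivial distance ideals, equivalently $\Phi(G) \leq k$. This follows immediately by chaining with the first part: $\Phi(G) \leq \phi(D_X(G)|_{X=\mathbf{d}}) \leq k$. So the second assertion is a formal corollary of the first once I unwind the definition of $\Lambda_k^{\mathbb{Z}}$ in terms of $\Phi$.

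\textbf{Anticipated obstacle.} There is essentially no deep obstacle here, since the result is a bookkeeping consequence of Proposition~\ref{prop:eval1}; the only point requiring care is the interaction between evaluation and ideal triviality, and the edge case where $i$ could exceed the rank $r$. I would handle this by observing that if $I_i^{\mathbb{Z}}(G)$ is trivial then so is its evaluation, and a trivial ideal of minors of size $i$ means that matrix has rank at least $i$ over the fraction field, so $i \leq r$ and Proposition~\ref{prop:eval1} applies cleanly. The substantive content all lives in Proposition~\ref{prop:eval1}, which I am permitted to assume; the corollary merely repackages it into the two monotonicity statements.
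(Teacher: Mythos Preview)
Your proposal is correct and matches the paper's approach: the paper does not give a separate proof of this corollary, merely stating it as ``a direct consequence'' of Proposition~\ref{prop:eval1} (together with the remark preceding Lemma~\ref{lem:deltaideal} that triviality of $I_i^{\mathbb{Z}}(G)$ forces $f_i=1$), and your argument is exactly the natural way to unpack that implication. Your handling of the edge case $i\leq r$ is a nice touch that the paper leaves implicit.
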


In particular, finding a characterization of $\Lambda_k^\mathbb{Z}$ can help in the characterization of the graphs with $\phi(D(G))\leq k$.

\section{$\{\mathcal{F},\text{\sf odd-holes}_7\}$-free graphs}\label{sec:F odd holes free graphs}

Despite the fact that distance ideals are not induced monotone, in \cite{at} classifications of the graphs in $\Lambda_1^{\mathbb Z}$ and $\Lambda_1^{\mathbb Q}$ were obtained in terms of forbidden induced subgraphs.
We will recall them in order to obtain a classification of $\Lambda_2^{\mathbb Z}$.
As stated previously, we will only consider connected graphs.

The next classifications were obtained in \cite{at}, and it is worth mentioning that in \cite{alfaro2} an infinite number of minimal forbidden graphs were found for graphs with at most two trivial distance ideals over $\mathbb Z[X]$.

\begin{theorem}\cite{at}\label{teo:classification}
Let $G$ be a connected graph, the following are equivalent:
\begin{enumerate}
\item $G$ has only 1 trivial distance ideal over $\mathbb{Z}[X]$.
\item $G$ is $\{P_4,\sf{paw},\sf{diamond}\}$-free.
\item $G$ is an induced subgraph of $K_{m,n}$ or $K_{n}$.
\end{enumerate}
\end{theorem}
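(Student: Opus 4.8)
The plan is to prove the equivalence by a cycle of implications $(1)\Rightarrow(2)\Rightarrow(3)\Rightarrow(1)$, exploiting the fact that the three small forbidden graphs $P_4$, $\mathsf{paw}$, and $\mathsf{diamond}$ all have diameter $2$, so that Lemma~\ref{lemma:distance2inducedmonotone} applies. First I would establish $(1)\Rightarrow(2)$ in its contrapositive form: if $G$ contains one of $P_4$, $\mathsf{paw}$, or $\mathsf{diamond}$ as an induced subgraph $H$, then since each such $H$ has diameter $2$, Lemma~\ref{lemma:distance2inducedmonotone} gives $I_2^{\mathbb{Z}}(H)\subseteq I_2^{\mathbb{Z}}(G)$ for all relevant $i$. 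It then suffices to exhibit a single explicit computation showing that $I_2^{\mathbb{Z}}(H)=\langle 1\rangle$ for each of the three graphs — that is, that the generalized distance matrix $D_X(H)$ contains a $2\times 2$ minor equal to a unit, or more precisely that the ideal generated by all $2$-minors is the whole ring. For $P_4$, $\mathsf{paw}$, $\mathsf{diamond}$ one computes these minors directly (a $4\times 4$ symbolic matrix in each case) and checks that some $\mathbb{Z}$-linear combination of $2$-minors equals $1$; this forces $I_2^{\mathbb{Z}}(G)$ to be trivial, so $G$ has at least two trivial distance ideals, contradicting $(1)$.

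The implication $(2)\Rightarrow(3)$ is purely graph-theoretic and I would treat it as a structural/forbidden-subgraph characterization. The claim is that a connected $\{P_4,\mathsf{paw},\mathsf{diamond}\}$-free graph must be an induced subgraph of some complete bipartite graph $K_{m,n}$ or of some complete graph $K_n$. The natural route is a case analysis on whether $G$ contains a triangle. If $G$ is triangle-free, then $P_4$-freeness together with connectedness should force $G$ to be complete bipartite (a connected triangle-free graph with no induced $P_4$ has diameter at most $2$ and, analysing its structure via the absence of induced $P_4$, turns out to be a complete bipartite graph, hence an induced subgraph of $K_{m,n}$). If $G$ does contain a triangle, then $\mathsf{diamond}$-freeness controls how triangles can share edges and $\mathsf{paw}$-freeness prevents a triangle from having a pendant-type attachment; combining these should force every component of the neighbourhood structure to collapse so that $G$ is a complete graph $K_n$. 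I would phrase this carefully using the fact that $\{P_4,\mathsf{paw},\mathsf{diamond}\}$-free graphs are exactly the disjoint unions of complete graphs and complete bipartite graphs, restricted here to the connected case.

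Finally, for $(3)\Rightarrow(1)$ I would directly verify that induced subgraphs of $K_{m,n}$ and of $K_n$ have exactly one trivial distance ideal. For $K_n$ the distance matrix is $J-I$ (all off-diagonal entries $1$), so $D_X(K_n)=\diag(X)+(J-I)$; one checks that $I_1^{\mathbb{Z}}=\langle 1\rangle$ (an entry equals $1$) while $I_2^{\mathbb{Z}}$ is nontrivial by evaluating at a convenient integer vector and invoking Lemma~\ref{lem:deltaideal} to show $\Delta_2\neq 1$. Induced subgraphs of $K_{m,n}$ have all distances in $\{1,2\}$ (diameter $2$), and a similar evaluation argument shows that $\Delta_2$ of the evaluated matrix is not a unit, again via Lemma~\ref{lem:deltaideal}; the key point is that the off-diagonal pattern of ones and twos forces a common factor in the $2$-minors after the right integer substitution. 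I expect the main obstacle to be the structural step $(2)\Rightarrow(3)$, specifically handling the triangle case cleanly: showing that $\mathsf{paw}$- and $\mathsf{diamond}$-freeness together with connectedness genuinely forbid any vertex outside a maximal clique from attaching in a way that keeps $G$ from being complete. The computational steps $(1)\Rightarrow(2)$ and $(3)\Rightarrow(1)$ are routine symbolic determinant calculations, but I would want to record the explicit $2$-minor combination witnessing triviality (for the forward direction) and the explicit integer evaluation witnessing nontriviality of $\Delta_2$ (for the reverse direction) so that both halves are fully verified rather than merely asserted.
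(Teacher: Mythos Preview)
The paper does not prove this theorem; it is quoted from \cite{at} and simply stated. So there is no ``paper's own proof'' to compare against, and I will just assess your plan for correctness.

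Your cycle $(1)\Rightarrow(2)\Rightarrow(3)\Rightarrow(1)$ is the right shape, and the arguments for $(2)\Rightarrow(3)$ and $(3)\Rightarrow(1)$ are sound (for the latter, evaluating $D_X(K_n)$ at $X=\mathbf{1}$ gives $J$, and evaluating $D_X(K_{m,n})$ at $X=\mathbf{2}$ gives a matrix whose $2$-minors are all multiples of $3$, so Lemma~\ref{lem:deltaideal} applies in each case). The structural step $(2)\Rightarrow(3)$ is also fine: in the triangle case, Olariu's theorem (Lemma~\ref{lemma:pawfree}) gives a complete multipartite graph, and since $K_{2,1,1}$ is exactly the diamond, diamond-freeness forces all parts to be singletons, i.e.\ $G=K_n$; in the triangle-free case, $P_4$-freeness kills all odd cycles of length $\geq 5$, so $G$ is bipartite, and a connected bipartite cograph is complete bipartite.

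There is, however, a genuine gap in $(1)\Rightarrow(2)$. You assert that $P_4$, $\mathsf{paw}$, and $\mathsf{diamond}$ all have diameter $2$, but $P_4$ has diameter $3$: its endpoints are three edges apart. Consequently Lemma~\ref{lemma:distance2inducedmonotone} does \emph{not} apply to an induced $P_4$, and you cannot simply read off $I_2^{\mathbb{Z}}(P_4)\subseteq I_2^{\mathbb{Z}}(G)$. Concretely, if $u_1u_2u_3u_4$ is an induced $P_4$ in $G$, the distance $d_G(u_1,u_4)$ may be either $3$ or $2$ (there could be a shortcut through $V(G)\setminus\{u_1,\dots,u_4\}$), so the $4\times 4$ block of $D_X(G)$ on these vertices is not determined. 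The fix is easy but must be carried out: check both possible submatrices. When $d_G(u_1,u_4)=3$ the $2$-minor on rows $\{u_1,u_2\}$, columns $\{u_3,u_4\}$ equals $2\cdot 2-3\cdot 1=1$; when $d_G(u_1,u_4)=2$ the $2$-minor on rows $\{u_1,u_3\}$, columns $\{u_2,u_4\}$ equals $1\cdot 1-2\cdot 1=-1$. Either way $1\in I_2^{\mathbb{Z}}(G)$, so the contrapositive goes through. This is exactly the kind of case split the paper performs for $P_5$ in Proposition~\ref{prop:P5 is forb}, and you should mirror it here rather than invoking the diameter-$2$ lemma.
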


\begin{theorem}\cite{at}\label{teo:classification2}
Let $G$ be a connected graph, the following are equivalent:
\begin{enumerate}
\item $G$ has only 1 trivial distance ideal over $\mathbb{Q}[X]$.
\item $G$ is $\{P_4,\sf{paw},\sf{diamond}, C_4\}$-free.
\item $G$ is an induced subgraph of $K_{1,n}$ or $K_{n}$.
\end{enumerate}
\end{theorem}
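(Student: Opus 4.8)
The plan is to establish the cycle of implications $(1)\Rightarrow(2)\Rightarrow(3)\Rightarrow(1)$, leaning throughout on the already-proven integral classification, Theorem~\ref{teo:classification}. Two preliminary reductions organize everything. First, since $G$ is connected on at least two vertices some off-diagonal entry of $D_X(G)$ equals $1$, a unit in ${\mathbb Q}$, so $I_1^{\mathbb Q}(G)=\langle 1\rangle$ always; hence $(1)$ is equivalent to the single statement that $I_2^{\mathbb Q}(G)$, the ideal generated over ${\mathbb Q}[X]$ by the $2\times 2$ minors of $D_X(G)$, is not the unit ideal. Second, extension of scalars yields $\Lambda_1^{\mathbb Q}\subseteq\Lambda_1^{\mathbb Z}$: an identity $1=\sum_i p_i m_i$ with $p_i\in{\mathbb Z}[X]$ and $m_i$ a $2\times 2$ minor persists over ${\mathbb Q}[X]$, so $I_2^{\mathbb Z}(G)=\langle 1\rangle$ forces $I_2^{\mathbb Q}(G)=\langle 1\rangle$, and contrapositively $I_2^{\mathbb Q}(G)\neq\langle 1\rangle$ implies $I_2^{\mathbb Z}(G)\neq\langle 1\rangle$.

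For $(1)\Rightarrow(2)$ I would argue as follows. Assuming $I_2^{\mathbb Q}(G)\neq\langle 1\rangle$, the second reduction puts $G$ in $\Lambda_1^{\mathbb Z}$, so Theorem~\ref{teo:classification} makes $G$ at once $\{P_4,{\sf paw},{\sf diamond}\}$-free; this single stroke removes the delicate $P_4$, whose diameter $3$ places it outside the reach of Lemma~\ref{lemma:distance2inducedmonotone}. There remains only to forbid $C_4$. If $G$ had an induced $C_4$, then since $\diam(C_4)=2$ Lemma~\ref{lemma:distance2inducedmonotone} gives $I_2^{\mathbb Q}(C_4)\subseteq I_2^{\mathbb Q}(G)$; but $D_X(C_4)$ contains the $2\times 2$ submatrix $\left(\begin{smallmatrix}2&1\\1&2\end{smallmatrix}\right)$ of distances, whose determinant $3$ is a unit over ${\mathbb Q}$, so $I_2^{\mathbb Q}(C_4)=\langle 1\rangle$ and therefore $I_2^{\mathbb Q}(G)=\langle 1\rangle$, a contradiction. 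Hence $G$ is $\{P_4,{\sf paw},{\sf diamond},C_4\}$-free.

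The equivalence $(2)\Leftrightarrow(3)$ I would obtain almost for free from Theorem~\ref{teo:classification}. If $G$ is $\{P_4,{\sf paw},{\sf diamond},C_4\}$-free then it is in particular $\{P_4,{\sf paw},{\sf diamond}\}$-free, so $G$ is an induced subgraph of some $K_{m,n}$ or some $K_n$. In the complete case $G=K_r$, giving $(3)$ directly. In the complete bipartite case a connected induced subgraph of $K_{m,n}$ is itself complete bipartite, say $G=K_{a,b}$ with $a,b\ge 1$; as $C_4=K_{2,2}$ and $G$ is $C_4$-free we must have $\min(a,b)\le 1$, so $G$ is the star $K_{1,b}$, an induced subgraph of $K_{1,n}$. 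The reverse implication is immediate, since stars and complete graphs plainly contain none of $P_4,{\sf paw},{\sf diamond},C_4$ as an induced subgraph.

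For $(3)\Rightarrow(1)$, connectedness forces $G$ to be a star $K_{1,m}$ or a complete graph $K_r$, the only connected induced subgraphs of $K_{1,n}$ and of $K_n$. To certify $I_2^{\mathbb Q}(G)\neq\langle 1\rangle$ it suffices to produce a point ${\bf d}\in{\mathbb Q}^{|V(G)|}$ at which every $2\times 2$ minor vanishes, for evaluating a putative identity $1=\sum_i p_i m_i$ at ${\bf d}$ would then give $1=0$; equivalently, it suffices to make $D_X(G)|_{X={\bf d}}$ have rank at most $1$. For $K_r$ the choice ${\bf d}={\bf 1}$ turns $D_X$ into the all-ones matrix, of rank $1$. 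For the star with center $c$ and leaves, setting $x_c=\tfrac12$ and $x_\ell=2$ at every leaf $\ell$ makes each leaf row equal to twice the center row, again of rank $1$; this finishes the cycle. The main obstacle, and the feature that genuinely separates the rational case from the integral one, is the status of $C_4$: over ${\mathbb Z}$ the minor $3$ is not invertible, so $C_4$ persists in $\Lambda_1^{\mathbb Z}$, whereas over ${\mathbb Q}$ it becomes a unit and forbids $C_4$, while dually keeping the stars inside $\Lambda_1^{\mathbb Q}$ demands the honestly rational rank-$1$ witness $(\tfrac12,2,\dots,2)$, which has no integral counterpart. The non-monotonicity of distance ideals, together with the fact that Lemma~\ref{lemma:distance2inducedmonotone} reaches only diameter-$2$ subgraphs, is the technical hazard around $P_4$; routing through $\Lambda_1^{\mathbb Q}\subseteq\Lambda_1^{\mathbb Z}$ and Theorem~\ref{teo:classification} is what neutralizes it.
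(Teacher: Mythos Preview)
The paper does not actually prove Theorem~\ref{teo:classification2}: it is quoted verbatim from \cite{at} and no argument is supplied here, so there is no in-paper proof to compare against. Judged on its own, your argument is correct. The cycle $(1)\Rightarrow(2)\Rightarrow(3)\Rightarrow(1)$ is cleanly executed: the inclusion $\Lambda_1^{\mathbb Q}\subseteq\Lambda_1^{\mathbb Z}$ together with Theorem~\ref{teo:classification} handles $P_4$, {\sf paw}, and {\sf diamond} in one stroke; the $C_4$ exclusion via the constant $2\times 2$ minor $\det\left(\begin{smallmatrix}2&1\\1&2\end{smallmatrix}\right)=3$ and Lemma~\ref{lemma:distance2inducedmonotone} is valid; and the rank-one evaluations ${\bf d}={\bf 1}$ for $K_r$ and ${\bf d}=(\tfrac12,2,\dots,2)$ for $K_{1,m}$ correctly certify $I_2^{\mathbb Q}\neq\langle 1\rangle$. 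Your observation that the genuinely rational content is concentrated in the $C_4$ step (where $3$ becomes a unit) and in the star step (where the witness $\tfrac12$ has no integral analogue) is exactly the point.
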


Later, in \cite{alfaro2} it was found that the graphs in $\Lambda_2^\mathbb{Z}$ are $\{\mathcal{F},\text{\sf odd-holes}_7\}$-free.
This, combined with the fact that the distance matrix of a tree has exactly 2 invariant factors equal to 1, shows that
\[
\textsf{trees} \subseteq \Lambda_2^\mathbb{Z} \subseteq \{\mathcal{F},\text{\sf odd-holes}_7\} \text{-free graphs}.
\]

In this section, we will give a complete characterization of the $\{\mathcal{F},\text{\sf odd-holes}_7\}$-free graphs. 
Let us begin the discussion recalling the next result, which follows since the $\text{\sf anti-hole}_{6}$ graphs $\left(\overline{C}_n\text{ for }n\geq6\right)$ contain an induced {\sf house}.

\begin{lemma}\label{lem:housefreeantioddhole}
    Let $G$ be a graph. If $G$ is a {\sf house}-free graph, then it is $\text{\sf anti-hole}_{6}$-free.  
\end{lemma}

On the other hand, if $G$ is $\{\mathcal{F},\text{\sf odd-holes}_7\}$-free and has an induced $C_5$, then any vertex $v\in V(G)\setminus V(C_5)$ adjacent to any subset of vertices in $V(C_5)$ must not be adjacent to only one vertex, otherwise {\sf 5-pan} would be an induced subgraph of $G$. 
Also, the vertex $v$ cannot be adjacent to two vertices in $V(C_5)$, otherwise $G$ would have either a {\sf bull} graph or a $\sf G_{6,10}$ as induced subgraph.
Similarly, $v$ cannot be adjacent to three vertices in $V(C_5)$ since this would induce a {\sf bull} graph or a $\Gn$ graph in $G$. 
Finally, if $|N_G(v)\cap V(C_5)|\geq 4$, then $G$ contains a {\sf gem} graph as an induced subgraph, which is not possible. 
Thus, next results follow. 

\begin{lemma}\label{lem:householes6free}
    If $G$ is a connected $\{\mathcal{F},\text{\sf odd-holes}_7\}$-free and has an induced $C_5$, then $G$ must be a $C_5$.
\end{lemma}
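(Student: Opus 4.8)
The plan is to argue by contradiction, reducing the global claim to a local analysis of how a single vertex can attach to the induced $C_5$. Suppose $G$ is connected and $\{\mathcal{F},\text{\sf odd-holes}_7\}$-free, contains an induced five-cycle $C$, and yet $V(G)\neq V(C)$. Since $G$ is connected, some vertex outside $C$ must be adjacent to it: taking a shortest path from an arbitrary external vertex to $V(C)$ and letting $v$ be the last vertex on it still outside $C$, we obtain $v\in V(G)\setminus V(C)$ with $k:=|N_G(v)\cap V(C)|\geq 1$. It then suffices to rule out each value $k\in\{1,2,3,4,5\}$, which is precisely what the discussion preceding the statement accomplishes.

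I would organize the contradiction by $k$, keeping track of the cyclic positions of the neighbors of $v$ on $C$. For $k=1$, the set $\{v\}\cup V(C)$ induces a \Gg. For $k=2$, the two neighbors are either consecutive on $C$, in which case the triangle $v$ forms with them, together with the two cycle vertices adjacent to that pair, induces a \Ga, or non-consecutive, in which case no triangle arises and $\{v\}\cup V(C)$ induces a $\Gk$. For $k=3$, one uses that any three vertices of $C$ contain a consecutive pair: if the three neighbors are not all consecutive the same construction yields a \Ga, whereas if they are three consecutive vertices the two triangles through $v$ obstruct every pendant and $\{v\}\cup V(C)$ induces a $\Gn$. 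Finally, for $k\geq 4$ any four neighbors of $v$ span an induced $P_4$ on $C$ that $v$ dominates, producing a \Gd. Each outcome belongs to $\mathcal{F}$, contradicting freeness; hence no such $v$ exists, and by connectivity $V(G)=V(C)$, so $G=C_5$ since $C$ is induced.

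The step I expect to require the most care is the exhaustiveness and correctness of this case analysis, rather than any deep idea. For each $k$ one must verify that the selected five or six vertices induce exactly the named forbidden graph: the prescribed edges must all be present, and, more delicately, the edges of $C_5$ itself must not create an unwanted chord. For example, one checks that the intended pendants of a \Ga are non-adjacent on the cycle and are not themselves neighbors of $v$, and that the consecutive/non-consecutive dichotomy cleanly separates the \Ga outcome from the $\Gk$ and $\Gn$ outcomes. Since a single vertex can attach to a $C_5$ only in these finitely many patterns, checking them all is exactly what closes the proof; the remainder is bookkeeping forced by connectivity.
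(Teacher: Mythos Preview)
Your proposal is correct and follows exactly the same approach as the paper: the paper's argument (given in the paragraph immediately preceding the lemma) also takes a vertex $v\in V(G)\setminus V(C_5)$ adjacent to $C_5$ and rules out $k=1,2,3,\geq 4$ via the forbidden subgraphs $\Gg$, $\{\Ga,\Gk\}$, $\{\Ga,\Gn\}$, and $\Gd$, respectively. Your case split by consecutivity for $k=2,3$ makes explicit what the paper leaves implicit, but the proof is otherwise identical.
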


A graph is {\it perfect} if every induced subgraph satisfies that the size of its largest clique equals its minimal number of colors needed to color the vertices in such a way that no two adjacent vertices have the same color.
Let $\text{\sf odd-holes}_5$ be the cycles of odd length of size at least 5.

\begin{theorem}[Strong perfect graph theorem]\cite{perfect}
    A graph is perfect if and only if neither $G$ nor $\overline{G}$ contains $\text{\sf odd-holes}_5$ as induced subgraphs. 
\end{theorem}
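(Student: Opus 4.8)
The plan is to recognize first that this statement is the celebrated Strong Perfect Graph Theorem of Chudnovsky, Robertson, Seymour, and Thomas, whose complete proof runs to roughly one hundred and fifty pages; accordingly, I would not attempt to reprove it from first principles but would invoke \cite{perfect} as a black box, exactly as the present paper does. For orientation, though, let me describe how the two implications are organized and where the difficulty lies.

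The forward (easy) direction asserts that a perfect graph can contain neither an odd hole $C_{2k+1}$ with $k\geq 2$ nor an odd antihole $\overline{C}_{2k+1}$. First I would verify the two elementary facts that each $C_{2k+1}$ has clique number $2$ but chromatic number $3$, and that each $\overline{C}_{2k+1}$ has clique number $k$ but chromatic number $k+1$; hence both families are imperfect. Since perfection is hereditary—imposed on every induced subgraph—no perfect graph can contain an imperfect induced subgraph, so neither $G$ nor $\overline{G}$ can contain an odd hole of length at least $5$, an odd hole in $\overline{G}$ being precisely an odd antihole in $G$. This direction I would dispatch by the counting just indicated.

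The reverse (hard) direction carries the real content: every \emph{Berge} graph—one with no odd hole and no odd antihole—is perfect. Here the approach of \cite{perfect} is structural rather than colouring-theoretic. One establishes a decomposition theorem stating that every Berge graph is either \emph{basic} (bipartite, a line graph of a bipartite graph, the complement of one of these, or a double split graph) or else admits one of a short list of structural cuts (a $2$-join, a $2$-join in the complement, or a balanced skew partition). One then checks that basic graphs are perfect and that none of the listed cuts can occur in a minimal imperfect Berge graph, closing the argument by a minimal-counterexample analysis.

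The main obstacle, by an enormous margin, is the decomposition theorem itself: controlling the many ``trigraph'' configurations that can arise and proving that any Berge graph avoiding every basic type must admit a clean structural cut. This is precisely the portion demanding the long case analysis of \cite{perfect}, and it is the reason the statement is imported here as an external input rather than derived within this paper.
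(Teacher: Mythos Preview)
Your proposal is correct and matches the paper's treatment: the Strong Perfect Graph Theorem is simply quoted from \cite{perfect} with no proof supplied, so invoking it as a black box is exactly what the authors do. Your additional sketch of the easy direction and the Chudnovsky--Robertson--Seymour--Thomas decomposition strategy is accurate and goes beyond what the paper itself provides, but it is consistent with, not different from, the paper's approach.
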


It is interesting that any connected $\{\mathcal{F},\text{\sf odd-holes}_7\}$-free, other than $C_5$, is included in this special class of graphs.

\begin{corollary}\label{lemma:perfect}
    Let $G$ be a connected $\{\mathcal{F},\text{\sf odd-holes}_7\}$-free graphs such that $G\neq C_5$, then $G$ is a perfect graph.
\end{corollary}
\begin{proof}
    This follows from Lemmas~\ref{lem:housefreeantioddhole} and~\ref{lem:householes6free}, and from the {\it strong perfect graph theorem}.
\end{proof}

We will need a couple of results. 
\begin{lemma}\cite[Theorem 1]{O88}\label{lemma:pawfree}
  Let $G$ be a connected {\sf paw}-free graph.
  Then $G$ is either {\sf triangle}-free or a complete multipartite graph.
\end{lemma}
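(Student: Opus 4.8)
The statement to prove is Lemma~\ref{lemma:pawfree}, attributed to Olariu~\cite{O88}: every connected paw-free graph is either triangle-free or a complete multipartite graph. Let me sketch a proof.

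\medskip

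The plan is to argue by a dichotomy on whether $G$ contains a triangle. If $G$ is triangle-free we are done immediately, so assume $G$ contains at least one triangle; I will show $G$ must then be complete multipartite. The key structural fact I would extract from paw-freeness is a local statement about adjacency to triangles: \emph{if $T=\{a,b,c\}$ is a triangle in $G$ and $v$ is any other vertex, then $v$ is adjacent to either zero, two, or three vertices of $T$ — never exactly one.} Indeed, if $v$ were adjacent to exactly one vertex of $T$, say to $a$ but not $b$ or $c$, then $\{v,a,b,c\}$ would induce a paw (the triangle $abc$ with pendant edge $va$), contradicting paw-freeness. This is the one-line combinatorial heart of the argument.

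\medskip

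First I would use this local fact to establish that the relation ``non-adjacent'' behaves like an equivalence relation on $V(G)$, which is exactly what characterizes complete multipartite graphs (a graph is complete multipartite iff non-adjacency is transitive, equivalently iff the complement is a disjoint union of cliques, equivalently iff $G$ is $\overline{P_3}$-free, i.e.\ has no induced $P_3$ in the complement). So the real target is: \emph{$G$ contains no induced $K_1\cup K_2$} (three vertices $u,v,w$ with $v\sim w$ but $u$ adjacent to neither). Suppose for contradiction such a triple exists. Since $G$ is connected and contains a triangle, I would try to propagate the triangle's influence toward this bad triple using connectivity: take a shortest path connecting the component-like structure, and apply the zero/two/three adjacency rule along the way to force a contradiction, showing that an isolated-from-an-edge vertex cannot coexist with a triangle in a connected paw-free graph.

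\medskip

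Concretely, the main engine would be a propagation/spreading argument. Starting from one triangle, the adjacency rule says every other vertex sees $0$, $2$, or $3$ of its vertices; vertices seeing $2$ or $3$ can be used to build new overlapping triangles, and one shows the set of vertices ``reachable'' through such triangle-sharing is both closed and, by connectivity, all of $V(G)$. Within this triangle-saturated world, I would verify directly that non-adjacency is transitive: if $u\not\sim v$ and $v\not\sim w$ but $u\sim w$, then locating a common triangle and testing the $0/2/3$ rule against it forces either a paw or a direct contradiction. The hard part will be organizing the connectivity/propagation cleanly so that the triangle's structural constraint genuinely reaches every vertex and every potential non-adjacency obstruction — the local paw-free rule is easy, but globalizing it to force the full complete-multipartite structure, while correctly handling the interface between the triangle-containing part and any vertices at distance $\geq 2$ from it, is where the care is needed. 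One clean way to finish is to show the complement $\overline{G}$ is $P_3$-free (no induced path on three vertices), since a complement-$P_3$ is precisely an induced $K_1\cup K_2$ in $G$, and then invoke that $P_3$-free graphs are disjoint unions of complete graphs, so $\overline{G}$ is a disjoint union of cliques and hence $G$ is complete multipartite.
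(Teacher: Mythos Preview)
The paper does not prove this lemma at all; it is quoted from Olariu~\cite{O88} and used as a black box inside the proof of Theorem~\ref{theo:main theorem2}. So there is no in-paper argument to compare your sketch against.

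Your outline has the right ingredients: the $0/2/3$ adjacency rule for a vertex versus a triangle is indeed the combinatorial core, and recasting ``complete multipartite'' as ``no induced $K_1\cup K_2$'' (equivalently $\overline G$ is $P_3$-free) is the standard target. Where the sketch is not yet a proof is the globalization step. You assert that once every vertex is reached by triangle-propagation, ``locating a common triangle and testing the $0/2/3$ rule against it forces \ldots\ a contradiction,'' but you never say which triangle or how. Knowing only that every \emph{vertex} lies in a triangle is not enough: if $\{u,v,w\}$ is an induced $K_1\cup K_2$ with $v\sim w$ and $u$ lies in some triangle $\{u,x,y\}$, the $0/2/3$ rule applied to $v$ (or $w$) against $\{u,x,y\}$ allows the ``sees $0$'' option, which just produces another $K_1\cup K_2$ rather than a contradiction.

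One clean way to close the gap is to strengthen the propagation to show that every \emph{edge} lies in a triangle (walk a shortest path from an endpoint of the edge to the initial triangle, applying $0/2/3$ at each step; then the other endpoint, being adjacent to a triangle vertex, must see at least two of that triangle). With this in hand, take a shortest path $u=q_0,\dots,q_m$ from $u$ to $\{v,w\}$, put the edge $q_{m-1}q_m$ in a triangle, and apply the $0/2/3$ rule to $q_{m-2}$ and to the remaining vertex of $\{v,w\}$ against that triangle; this forces $q_{m-2}$ to be adjacent to $\{v,w\}$, contradicting minimality of $m$. Your write-up gestures at exactly this kind of shortest-path propagation but stops short of carrying it out.
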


A {\it chord} in a cycle is an edge between two vertices in the cycle that is not an edge of the cycle.
A graph $G$ is {\it chordal} if each cycle in $G$ of length at least 4 has at least one chord.
Let $\text{\sf holes}_4$ denote the cycles of length at least 4.
Then, $G$ is chordal if and only if $G$ is $\text{\sf holes}_4$-free.

\begin{lemma}[\cite{nishimura,rautenbach}]\label{lemma:3PL}
    Let $G$ be a connected $\{\textsf{bull, dart, gem}\}$-free and chordal graph. 
    Then it is a {\sf 3-leaf power} graph. That is, a graph whose vertices are the leaves of a tree and whose edges connect a pair of leaves whose distance in the tree is at most $3$. Equivalently, a graph that results from a tree $T$ by replacing every vertex by a clique of arbitrary size.
\end{lemma}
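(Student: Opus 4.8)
The plan is to establish the substantive implication, namely that a connected chordal $\{\textsf{bull},\textsf{dart},\textsf{gem}\}$-free graph $G$ is the clique-substitution of a tree; the converse (that every $3$-leaf power is chordal and contains none of $\textsf{bull}$, $\textsf{dart}$, $\textsf{gem}$) is a direct check on these three graphs, and the equivalence of the two descriptions of $3$-leaf powers quoted in the statement is the standard tree-model characterization, which I would cite. I would phrase the argument through the \emph{critical clique graph} $CC(G)$: its nodes are the \emph{critical cliques} of $G$, the maximal vertex sets that induce a clique and whose members share a common closed neighborhood $N_G[\cdot]$, and two of them are adjacent in $CC(G)$ when they are completely joined in $G$. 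Since the vertices of a single critical clique have identical closed neighborhoods, any two distinct critical cliques are either completely joined or have no edge between them; hence $G$ is recovered from $CC(G)$ by blowing up each node into its clique. Consequently it suffices to prove that $CC(G)$ is a tree, for then $G$ is exactly a tree with each vertex replaced by a clique, i.e.\ a $3$-leaf power.

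First I would note that $CC(G)$ is connected because $G$ is, so the task reduces to showing $CC(G)$ is acyclic. Assume it has a cycle and consider a shortest one. If this shortest cycle had length at least $4$ it would be chordless, and choosing one representative vertex from each critical clique along it would yield, using the all-or-nothing adjacency between distinct critical cliques, an induced cycle of the same length in $G$, contradicting chordality. Therefore the shortest cycle of $CC(G)$ must be a triangle, i.e.\ there are three pairwise completely joined, pairwise distinct critical cliques $A$, $B$, $C$.

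It remains to derive a contradiction from such a triangle, and this case analysis is the main obstacle. Fix representatives $a\in A$, $b\in B$, $c\in C$; they induce a triangle, and $A\cup B\cup C$ induces a clique. Because $A$, $B$, $C$ are \emph{pairwise distinct} critical cliques, for each of the three pairs there is a vertex outside $A\cup B\cup C$ adjacent to a representative of one clique but not of the other. Adjoining two such distinguishing vertices to $\{a,b,c\}$ produces a five-vertex induced subgraph, and I would check that, after using chordality to exclude the configurations that would otherwise create a hole, every surviving attachment pattern of the two extra vertices to the triangle is isomorphic to one of $\textsf{bull}$ (two pendants on distinct triangle vertices), $\textsf{dart}$ (one vertex joined to two triangle vertices together with a pendant), or $\textsf{gem}$ (two vertices each joined to two triangle vertices sharing a common one). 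Any of these contradicts $\{\textsf{bull},\textsf{dart},\textsf{gem}\}$-freeness, so no triangle exists. Hence $CC(G)$ is a tree and $G$ is a $3$-leaf power.
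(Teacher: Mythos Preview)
The paper does not give its own proof of this lemma; it is quoted from \cite{nishimura,rautenbach} and used as a black box inside the proof of Theorem~\ref{theo:main theorem2}. Your approach through the critical clique graph $CC(G)$ is the standard one in that literature, and the reduction to showing that $CC(G)$ is acyclic, together with your treatment of cycles of length at least four, is correct.

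There is, however, a real gap in the triangle case. You assert that adjoining \emph{any} two distinguishing vertices to $\{a,b,c\}$ and discarding configurations that contain a hole leaves only $\textsf{bull}$, $\textsf{dart}$, or $\textsf{gem}$. This is false: if $u$ is adjacent only to $a$ while $w$ is adjacent to exactly $b$ and $c$, with $uw\notin E(G)$, then $\{u,a,b,c,w\}$ induces the kite (a diamond with a pendant attached to a degree-$2$ vertex), which is chordal and $\{\textsf{bull},\textsf{dart},\textsf{gem}\}$-free; the same happens when both extra vertices are adjacent to exactly $a$ and $c$. The common feature of these bad choices is that the two selected vertices together fail to separate one of the three pairs among $A,B,C$. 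The repair is to note that any vertex outside $A\cup B\cup C$ with nonempty proper attachment type $T\subsetneq\{A,B,C\}$ separates exactly two of the three pairs, and that among witnesses for $(A,B)$, $(B,C)$, $(A,C)$ one can always select two whose types do not lie in the same parallel class (a one-line pigeonhole: the witness for $(A,B)$ cannot miss $(A,B)$, etc., so the three witnesses cannot all miss the same pair). With that careful selection made first, every remaining attachment pattern does collapse to an induced $C_4$, $\textsf{bull}$, $\textsf{dart}$, or $\textsf{gem}$, and your argument closes.
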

It is worth mentioning that, in \cite{linear3leaf}, it was shown that {\sf 3-leaf power} graphs can be recognized in linear time.

Let $\mathbf{a}\in\mathbb{Z}^k$ be an integer vector, and let us define $P_k^{\mathbf{a}}$ as the graph obtained from $P_k$ by replacing the $i$-th vertex for a clique of size $a_i+1$ if $a_i> 0$, or by an independent set of size $-a_i +1$ if $a_i <0$ and respecting the adjacency prescribed by $P_k$. If $a_i=0$ we keep the $i$-th vertex. For instance, if $m_1,n_1,m_2,n_2,m_3\geq 0$, then $P_5^{(-m_1,n_1,-m_2,n_2,-m_3)}$ is the following graph
\begin{center}
        \begin{tikzpicture}[scale=1.4,thick]
	\tikzstyle{every node}=[minimum width=0pt, inner sep=2pt, circle]
	\draw (-4,0) node (v1) [draw, rectangle] {$\overline{K_{m_1+1}}$}; 
	\draw (-2,0) node (v2) [draw] {$K_{n_1+1}$};
	\draw (0,0) node (v3) [draw, rectangle] {$\overline{K_{m_2+1}}$};
	\draw (2,0) node (v4) [draw] {$K_{n_2+1}$};
        \draw (4,0) node (v5) [draw,rectangle] {$\overline{K_{m_3+1}}$};
	\draw (v1) -- (v2) -- (v3) -- (v4) -- (v5);
	\end{tikzpicture} 
        \end{center}
where each edge means that all the vertices on one side are adjacent to every vertex on the other. 
Now we are ready to prove the main result of this section.
\begin{theorem}\label{theo:main theorem2}
    Let $G$ be a connected $\{{\cal F},\text{\sf odd-holes}_7\}$-free graph with $n$ vertices. Then $G$ is one of the following:
    \begin{itemize}
        \item[i)] $C_5$;
        \item[ii)] a bipartite graph; 
        \item[iii)] a complete tripartite graph;
        \item[iv)] $K_{n-p+1,1,\ldots,1}$ where $p$ is the number of partitions;
        \item[v)] a connected induced subgraph of $P_5^{(-m_1,n_1,-m_2,n_2,-m_3)}$ with $m_1,n_1,m_2,n_2,m_3 \geq 0$;

        \item[vi)] or a connected induced subgraph of $P_4^{(n_1,-m_1,-m_2,n_2)}$ with $n_1,m_1,n_2,m_2 \geq 0$.

    \end{itemize}
\end{theorem}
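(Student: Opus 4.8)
The plan is to stratify $G$ by the richest substructure it is forced to contain and to peel off the six families using the structural lemmas already available. First dispose of $C_5$: if $G$ has an induced $C_5$, then Lemma~\ref{lem:householes6free} gives $G=C_5$, which is case (i). So from now on $G$ is $C_5$-free, hence perfect by Corollary~\ref{lemma:perfect}, and in particular has no induced odd hole of length $\geq 5$. Next, if $G$ is triangle-free then a shortest odd cycle, were one to exist, would be chordless and thus an induced odd hole of length $3$, $5$, or $\geq 7$ --- each excluded --- so $G$ has no odd cycle and is bipartite, which is case (ii). Henceforth I assume $G$ contains a triangle.

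Now branch on whether $G$ contains an induced \textsf{paw}. If $G$ is \textsf{paw}-free, then, since it has a triangle, Lemma~\ref{lemma:pawfree} forces $G$ to be a complete multipartite graph $K_{n_1,\dots,n_p}$ with $p\geq 3$. To pin down which of these survive, I would show that if $p\geq 4$ and two parts have size $\geq 2$, then two vertices from each of those two parts together with one vertex from two further parts induce $K_{2,2,1,1}=\Go\in\mathcal F$; hence either $p=3$, giving a complete tripartite graph (case (iii)), or at most one part exceeds size $1$, giving $K_{n-p+1,1,\dots,1}$ (case (iv)). The converse is clean: every induced subgraph of a complete multipartite graph is again complete multipartite, no odd hole is complete multipartite, and the only complete multipartite member of $\mathcal F$ is $\Go=K_{2,2,1,1}$, which embeds in neither a complete tripartite graph (too few parts) nor a complete split graph $K_{n-p+1,1,\dots,1}$ (too few large parts).

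The remaining case --- $G$ contains both a triangle and an induced \textsf{paw} --- is where essentially all the work lies, and the goal is to realize $G$ as a connected induced subgraph of $P_5^{(-m_1,n_1,-m_2,n_2,-m_3)}$ or $P_4^{(n_1,-m_1,-m_2,n_2)}$ (cases (v)--(vi)). Throughout, $\Ga,\Gb,\Gd\in\mathcal F$, so $G$ is $\{\Ga,\Gb,\Gd\}$-free. I would split on chordality. If $G$ is chordal, Lemma~\ref{lemma:3PL} presents it as a \textsf{3-leaf power}, i.e.\ a tree $T$ with each vertex blown up into a clique; one then argues, using $\Ge$- and $\Gl$-freeness together with the graphs $\Gf,\dots,\Gm$, that $T$ is essentially a path and that its clique/independent blocks obey the admissible alternation, yielding (vi) and the chordal instances of (v). If $G$ is not chordal it has an induced $C_4$; combining this $C_4$ with the triangle and with $\Gc$-, $\Ge$- and $G_{6,i}$-freeness shows the $C_4$ can only occur as a complete bipartite block $K_{m,m'}$ flanked by cliques, i.e.\ a non-chordal member of (v). In both subcases one builds a maximal ``blown-up induced path'', then bounds its number of blocks to at most five and forbids any extra block or stray vertex, because any deviation embeds one of the sixteen graphs of $\mathcal F$ (with $\Gp=C_7$ and the remaining odd holes ruling out the long configurations).

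The main obstacle is precisely this last case: it is a long but elementary induced-subgraph analysis whose difficulty is combinatorial bookkeeping --- bounding the number of blocks, controlling the clique/independent alternation, and checking how an arbitrary vertex may attach to an already-identified blown-up subpath without creating a forbidden graph. It is worth stressing that Theorem~\ref{theo:main theorem2} is purely graph-theoretic, so the non-monotonicity of distance ideals plays no role here; all the delicacy is in the finite case analysis.
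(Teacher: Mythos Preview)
Your plan is essentially the paper's: peel off $C_5$ via Lemma~\ref{lem:householes6free}, then triangle-free (bipartite), then \textsf{paw}-free via Olariu (complete multipartite, whence (iii)/(iv) by forbidding $\Go=K_{2,2,1,1}$), and in the \textsf{paw} case split on chordality and invoke the 3-leaf-power lemma. The one ingredient you leave implicit is the paper's pair of preliminary claims in the \textsf{paw} case---that every vertex of $G$ is adjacent to the \textsf{paw} (so $\mathrm{diam}(G)\le 4$) and that no induced $C_6$ can coexist with an induced \textsf{paw}---which are exactly what justify your jump from ``not chordal'' to ``has an induced $C_4$'' and underpin the bound on the number of blocks; once those are in place your sketch and the paper's case analysis (organized there by the diameter $2$, $3$, $4$ around a maximal induced path) coincide.
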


\begin{proof}
    By Lemma~\ref{lem:householes6free}, if $C_5$ is an induced subgraph of $G$, then $G=C_5$, that is, case (i). Thus, let us assume that $G$ is $C_5$-free. Moreover, if $G$ is {\sf triangle}-free, then $G$ is a bipartite graph and note that it is also $\cal F$-free, that is, case (ii).

    Therefore, let us assume that $G$ has an induced $K_3$. 
    Moreover, assuming that $G$ is {\sf paw}-free, then Lemma~\ref{lemma:pawfree} implies that $G$ is a complete multipartite graph. 
    However, in this case, if we have at least 4 partitions in $G$, we need all partitions but one to consist of a single vertex, since ${\sf G_{6,14}}=K_{2,2,1,1}$ is forbidden. Thus, $G$ is a complete tripartite graph, note that it is $\cal F$-free and \textsf{odd-holes}-free, or a complete multipartite graph $G=K_{n-p+1,1,\ldots,1}$ for some $n\geq p$, where $p$ is the number of partitions of $G$, these fall into cases (iii) and (iv), respectively.
    
    Hence, now assume that $G$ has an induced {\sf paw} with vertices $v_1,v_2,v_3$ and $v_4$, such that $v_1,\ v_2$, and $v_3$ form a {\sf triangle}, and that $v_4$ is adjacent to $v_3$ but not to $v_1$ nor $v_2$, see Figure~\ref{fig:pawinG}. 

\begin{figure}[ht]
\centering
\begin{tikzpicture}[scale=1.4,thick]
	\tikzstyle{every node}=[minimum width=0pt, inner sep=2pt, circle]
	\draw (-.5,-.9) node (v1) [draw] {\tiny $v_1$};
	\draw (.5,-.9) node (v2) [draw] {\tiny $v_2$};
	\draw (0,0) node (v3) [draw] {\tiny $v_3$};
	\draw (0,.9) node (v4) [draw] {\tiny $v_4$};
	\draw (v1) -- (v2);
	\draw (v1) -- (v3);
	\draw (v2) -- (v3);
	\draw (v3) -- (v4);
	\end{tikzpicture}
 \caption{An induced \textsf{paw} graph in $G$}
 \label{fig:pawinG}
\end{figure}
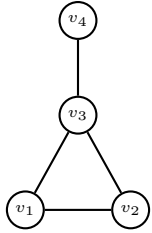

We might assume $G$ is not a complete graph, since this case was already considered as a complete multipartite graph with $n=p$.

\begin{claim}\label{claim0}
    Let $G$ be a $\cal F$-free and \textsf{odd-hole}-free graph with at least 5 vertices and an induced \textsf{paw}. Then every vertex in $G$ not in the \textsf{paw} must be adjacent to it. 
\end{claim}
\begin{proof}
    There must be a vertex $u$ adjacent to \textsf{paw} since $G$ is connected and $|G|\geq 5$. Notice that $V(\textsf{paw})\cup \{u\}$ induces a $P_3^{(1,-1,0)}$, a $P_3^{(2,0,0)}$, a $P_4^{(0,1,0,0)}$, a $P_3^{(1,0,1)}$, a $P_3^{(1,0,-1)}$, or a $P_4^{(1,0,0,0)}$.
    Now, assume there is another vertex $z$ non-adjacent to $V(\textsf{paw})$. Then, considering a path from $z$ to \textsf{paw}, by connectivity, there is a vertex $u$ adjacent to \textsf{paw} and a vertex $w$ non-adjacent to \textsf{paw} but adjacent to $u$. Then $V(\textsf{paw})\cup \{u,w\}$ induces a $G_{6,12}$, a bull graph, a co-twin-house, a bull graph, a $G_{6,5}$, or a $G_{6,7}$. Thus, every vertex in $G$ is adjacent to \textsf{paw}.
\end{proof}

In particular, with these assumptions, the previous claim implies that
\begin{itemize}
    \item[a)] $G$ has diameter at least $2$ and at most $4$.
\end{itemize}

Furthermore, let us prove the following fact.

\begin{claim}\label{claim1}
    It is impossible to have an induced paw and an induced cycle of order six at the same time. 
\end{claim}

\begin{proof}
    Let us assume that there is an induced $C_6$ with vertices $V(C_6)=\{c_1,c_2,c_3,c_4,c_5,c_6\}$ and adjacent accordingly to the vertices order. First note that: 
    \begin{itemize}
        \item[b)] Let $v\notin V(C_6)$ be adjacent to the cycle $C_6$, then $|E(v,C_6)| \leq 3$. Moreover, if $|E(v,C_6)|= 3$, then $$E(v,C_6)=\{(v,c_1), (v,c_3),(v,c_5)\} \text{ or } \{(v,c_2),(v,c_4),(v,c_6)\}.$$ 
    If $|E(v,C_6)|= 2$ then $E(v,C_6)=\{(v,c_i), (v,c_{i+2})\}$ for $i=1,2,3,4$, or $E(v,C_6)=\{(v,c_1), (v,c_{5})\}$ or $E(v,C_6)=\{(v,c_2), (v,c_{6})\}$ ($E(v,C_6)\neq \{(v,c_1), (v,c_{6})\}$ or it forms an induced bull graph). In particular $v$ is not adjacent to a pair of adjacent vertices in $C_6$
        \item[c)] Also, note that a pair of adjacent vertices not in $C_6$ can not be adjacent with the same vertex in $C_6$. 
        
    \end{itemize}
    
    Now let $A=V(\textsf{paw})\cap V(C_6)$, $|A|\leq 3$ as there are no triangles in $C_6$. Also, $|A|\neq 3$ by (b) and similarly $|A|\neq 2$ by (b) and (c).
    
    Now, if $|A| =1$, then $A$ can only consist of $v_4$ by (b). 
    Assume, without loss of generality, that $v_4=c_1$. Then $v_3$ is not adjacent to $c_2$ and is not adjacent to $c_6$, by (b). Moreover, $c_4$ is not adjacent to $v_3$ because otherwise $\{v_3,c_1,c_6,c_5,c_4\}$ induces a $C_5$ or a house graph. Then, $c_4$ must be adjacent to both $v_1$ and $v_2$. However, this is not allowed by (c). 
    
    On the other hand, if $|A|=0$, first note that if $v_1$ ($v_2$) is adjacent to a vertex in $C_6$, then $v_2$ ($v_1$) and $v_3$ are not adjacent to the same vertex by (c). Thus, inducing a bull graph or a house graph. Similarly, if $v_3$ is adjacent to a vertex in $C_6$ we have an induced $G_{6,7}$ or an induced bull graph or an induced House graph, or an induced $G_{6,5}$, or an induced $G_{6,15}$. 
    If $v_4$ is adjacent to $C_6$, we have an induced $G_{6,7}$ or some other vertex in \textsf{paw} have to be adjacent to a vertex in $C_6$ by (b), and we have already proved that this forces a forbidden structure. 
    Thus, if $|A|=0$, then $E(\textsf{paw},P_6)=\emptyset$, which is impossible by Claim \ref{claim0}. 
    
\end{proof}

Thus, we conclude that since $G$ has an induced \textsf{paw} and is $C_5$-free, then $G$ is $P_6$-free and \textsf{holes}-free.
If we further assume that $G$ is \textit{chordal}. Then, it is known that the $\{\textsf{bull, dart, gem}\}$-free chordal graphs are precisely the {\sf 3-leaf power} graphs according to the Lemma~\ref{lemma:3PL}. 

Now, from the set of forbidden graphs $\cal F$, we have the following six forbidden structures for the {\sf 3-leaf power} graph $G$, see Figure~\ref{fig:3LP}, where edges mean that all vertices in the clique are adjacent to all vertices in the adjacent clique (edge).
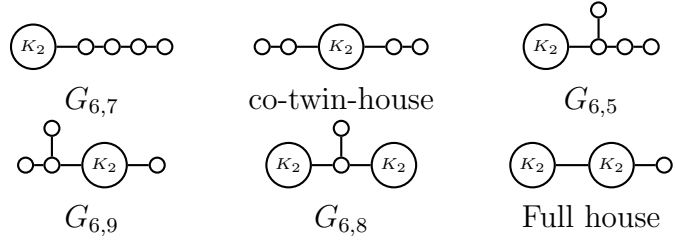
\begin{figure}[ht]
\centering
\begin{tabular}{c@{\extracolsep{1cm}}c@{\extracolsep{1cm}}c@{\extracolsep{1cm}}c}
 \begin{tikzpicture}[scale=1.4,thick]
	\tikzstyle{every node}=[minimum width=0pt, inner sep=2pt, circle]
	\draw (-1,0) node (v1) [draw] {\tiny $K_2$};
	\draw (-.5,0) node (v2) [draw] {};
	\draw (-.25,0) node (v3) [draw] {};
	\draw (-0,0) node (v4) [draw] {};
        \draw (0.25,0) node (v5) [draw] {};
	\draw (v1) -- (v2) -- (v3) -- (v4) -- (v5);
\end{tikzpicture}    
 &
 \begin{tikzpicture}[scale=1.4,thick]
 \tikzstyle{every node}=[minimum width=0pt, inner sep=2pt, circle]
	\draw (-1,0) node (v1) [draw] {};
	\draw (-.75,0) node (v2) [draw] {};
	\draw (-.25,0) node (v3) [draw] {\tiny $K_2$};
	\draw (0.25,0) node (v4) [draw] {};
        \draw (0.5,0) node (v5) [draw] {};
	\draw (v1) -- (v2) -- (v3) -- (v4) -- (v5);
\end{tikzpicture}   
 &
 \begin{tikzpicture}[scale=1.4,thick]
 \tikzstyle{every node}=[minimum width=0pt, inner sep=2pt, circle]
	\draw (-1,0) node (v1) [draw] {\tiny $K_2$};
	\draw (-.5,0) node (v2) [draw] {};
	\draw (-.25,0) node (v3) [draw] {};
	\draw (0,0) node (v4) [draw] {};
        \draw (-.5,0.35) node (v5) [draw] {};
	\draw (v1) -- (v2) -- (v3) -- (v4);
        \draw (v2) -- (v5);
\end{tikzpicture} \\
    $G_{6,7}$ & co-twin-house & $G_{6,5}$\\
\begin{tikzpicture}[scale=1.4,thick]
 \tikzstyle{every node}=[minimum width=0pt, inner sep=2pt, circle]
	\draw (-1,0) node (v1) [draw] {};
	\draw (-.75,0) node (v2) [draw] {};
	\draw (-.25,0) node (v3) [draw] {\tiny $K_2$};
	\draw (0.25,0) node (v4) [draw] {};
        \draw (-.75,0.35) node (v5) [draw] {};
	\draw (v1) -- (v2) -- (v3) -- (v4);
        \draw (v2) -- (v5); 
\end{tikzpicture}    
&
\begin{tikzpicture}[scale=1.4,thick]
 \tikzstyle{every node}=[minimum width=0pt, inner sep=2pt, circle]
	\draw (-1,0) node (v1) [draw] {\tiny $K_2$};
	\draw (-.5,0) node (v2) [draw] {};
	\draw (0,0) node (v3) [draw] {\tiny $K_2$};
	\draw (-.5,.35) node (v4) [draw] {};
	\draw (v1) -- (v2) -- (v3);
        \draw (v2) -- (v4);
\end{tikzpicture}  
& 
\begin{tikzpicture}[scale=1.4,thick]
 \tikzstyle{every node}=[minimum width=0pt, inner sep=2pt, circle]
	\draw (-1,0) node (v1) [draw] {\tiny $K_2$};
	\draw (-.25,0) node (v2) [draw] {\tiny $K_2$};
	\draw (0.25,0) node (v3) [draw] {};
	\draw (v1) -- (v2) -- (v3);
	
 \end{tikzpicture}\\
    $G_{6,9}$ & $G_{6,8}$ & Full house\\
\end{tabular}

 \caption{Forbidden structures of the {\sf 3-leaf power} graph.}
 \label{fig:3LP}
\end{figure}

Therefore, by (a), let us focus on graphs with diameter at most 4 and at least 2. 
If $P_5$ is an induced subgraph of $G$, all possible {\sf 3-leaf power} graphs with these forbidden structures and with an induced \textsf{paw} are precisely graphs of the form $P_5^{(-m_1,n_1,0,n_2,-m_2)}$ with $m_1,n_1,n_2,m_2\geq 0$ and $n_1+n_2\geq 1$ by the induced \textsf{paw}. 
This is included in case of (v).
If there is no induced $P_5$ but there is an induced $P_4$ we have the induced subgraphs of the previous form without induced $P_5$'s (subcase of (v)) and the graphs of the form $P_4^{(n_1,0,0,n_2)}$ with $n_1,n_2\geq 0$ and $n_1+n_2\geq 1$.
This is included in case (vi). On the other hand, if the diameter is 2, we have the following sub-cases of (v) and (vi); $P_3^{(n_1,0,n_2)}$ with $n_1,n_2\geq 0$ and $n_1+n_2\geq 1$, and $P_3^{(n,0,-m)}$ with $n\geq 1$ and $m\geq 0$.

Henceforth, let us assume that $C_4=\{c_1,c_2,c_3,c_4\}$ is an induced subgraph of $G$ as well as the \textsf{paw} graph.

First, for diameter 2, starting from a maximal path $P_3$ with vertices $\{p_1,p_2,p_3\}$, any vertex $v$ adjacent to it must not be adjacent only to $p_1$ or $p_3$. Also, notice that if there is a vertex $u$ adjacent to $v$ but not to $P_3$, then we have an induced $P_4$, a bull graph, or a dart graph. Thus, any vertex in $G$ must be adjacent to $P_3$. 
Therefore, considering the induced $C_4$, any other vertex must be adjacent to at least two vertices in $C_4$ (if it is adjacent to only two vertices in $C_4$ then these vertices are not adjacent in $C_4$ or a House graph is induced in $G$). If there is a vertex $v$ adjacent to every vertex in $C_4$ then if there is another vertex $u$ adjacent to $2$, or $3$ vertices in $C_4$ we get an induced dart, a $P_4$ (actually a gem graph) or a full house graph. If $u$ is adjacent to the four vertices in $C_4$, then it is not adjacent to $v$ or we get an induced $K_{2,2,1,1}$. However, this yields a complete tripartite graph, so there is no induced \textsf{paw}. Hence, we can assume that any vertex not in $C_4$ is adjacent to 2 or 3 vertices in $C_4$.
Moreover, there is at least one vertex adjacent to 3 vertices in $C_4$, say $v$, to ensure that we have induced \textsf{paw} in $G$. That is, $\{c_1,c_2,c_3,c_4,v\}$ induces $P_3^{(1,-1,0)}$, therefore any other vertex $u$ adjacent to 2 vertices in $C_4$ must induce $P_3^{(1,-1,-1)}$. On the other hand, if $u$ is adjacent to 3 vertices in $C_4$ it must induce $P_3^{(1,-1,1)}$ and in these two cases we must have $P_3^{(n,-m_1,-m_2)}$ or $P_3^{(n_1,-m,n_2)}$ with $n,n_1,m_1,m\geq 2$ and $m_2,n_2\geq 1$, respectively, which are included in cases (v) and (vi): 
\begin{center}
    \begin{tabular}{ccc}
      \begin{tikzpicture}[scale=1.4,thick]
\tikzstyle{every node}=[minimum width=0pt, inner sep=2pt, circle]
	\draw (-1,0) node (v2) [draw] {$K_{n}$};
	\draw (0,0) node (v3) [draw, rectangle] {$\overline{K}_{m_1}$};
	\draw (1,0) node (v4) [draw,rectangle] {$\overline{K}_{m_2}$};
	\draw (v2) -- (v3) -- (v4);
\end{tikzpicture}   &\text{ and }& \begin{tikzpicture}[scale=1.4,thick]
\tikzstyle{every node}=[minimum width=0pt, inner sep=2pt, circle]
	\draw (-1,0) node (v2) [draw] {$K_{n_1}$};
	\draw (0,0) node (v3) [draw, rectangle] {$\overline{K}_{m}$};
	\draw (1,0) node (v4) [draw] {$K_{n_2}$};
	\draw (v2) -- (v3) -- (v4);
\end{tikzpicture}. \\
         
    \end{tabular}
\end{center}

Second, for diameter 3, starting from a maximal path $P_4$ with vertices $\{p_1,p_2,p_3,p_4\}$. Then any vertex $v$ adjacent to $P_4$ is adjacent to $\{p_2\}$, $\{p_3\}$, $\{p_1,p_2\}$, $\{p_3,p_4\}$, $\{p_1,p_3\}$, $\{p_2, p_4\}$, $\{p_1,p_2,p_3\}$ or $\{p_2,p_3,p_4\}$. Note that this generates a $P_4^{\mathbf{a}}$ where $\mathbf{a}$ has all zero entries except one entry equal to $\pm 1$.

\begin{claim}
    There must be a vertex $v$ adjacent to $P_4$ such that $\{p_1,p_2,p_3,p_4,v\}$ induces at least a triangle.
\end{claim}
\begin{proof}
    Suppose that there is no such vertex $v$. Then note that $\{v_1,v_2,v_3\}\cap V(P_4)=\emptyset$ to avoid a forbidden subgraph. 
    Moreover, note that $E(\{ v_1, v_2, v_3 \}, P_4 ) = \emptyset$ and therefore $V(\textsf{paw})\cap P_4=\emptyset$. 
    Then, we have an induced $G_{6,7}$. 
\end{proof}

Therefore, we first assume that there is a vertex $v$ adjacent to $p_1$ and $p_2$. Then if there is another vertex $u\notin P_4$ adjacent to $v$, then $u$ is adjacent to $P_4$ to avoid a bull graph. Moreover, $\{p_1,p_2,p_3,p_4,v,u\}$ induces an $P_4^{(2,0,0,0)}$ or an $P_4^{(1,-1,0,0)}$. Moreover, there is no vertex adjacent to $u$ non-adjacent to $P_4$.
On the other hand, if there is another vertex $w$ adjacent to $P_4$ not adjacent to $v$, then $\{p_1,p_2,p_3,p_4,v,w\}$ induces $P_4^{(1,0,\pm 1,0)}$ or $P_4^{(1,0,0,\pm 1)}$ to avoid an induced forbidden subgraph. Moreover, there is no vertex $w_1$ adjacent to $w$ and non-adjacent to $P_4$. Therefore, $G$ is $P_4^{(n_1,-m_1,-m_2,n_2)}$ or $P_4^{(n_1,-m_1,n_2,-m_2)}$ for $n_i,m_i\geq 0$. By symmetry, a similar argument holds when $v$ is adjacent to $p_3$ and $p_4$.

Now, let us assume that there is a vertex $v$ adjacent to $p_1$, $p_2$ and $p_3$. Then if there is a vertex $u$ adjacent to $v$, it must be adjacent to other vertices in $P_4$ and $V(P_4)\cup\{v,u\}$ induces a $P_4^{(-1,1,0,0)}$ or a $P_4^{(0,1,-1,0)}$. Moreover, there is no vertex adjacent to $u$ non-adjacent to $P_4$. On the other hand, if there is a vertex $w$ not adjacent to $v$, adjacent to $P_4$, then $V(P_4)\cup \{v,w\}$ induces a $P_4^{(0,1,0,1)}$. In particular, $P_4^{(0,1,0,-1)}$ is ruled out by $G_{6,9}$.
Moreover, there is no vertex adjacent to $w$ non-adjacent to $P_4$. Therefore, $G$ is $P_4^{(-m_1,n_1,-m_2,n_2)}$ with $m_i,n_i\geq 0$. 

Third, for diameter 4, take a maximal path $P_5$ with vertices $\{p_1,p_2,p_3,p_4,p_5\}$, then any vertex $v$ adjacent to $P_5$ is adjacent to $\{p_2\}$, $\{p_3\}$, $\{p_4\}$, $\{p_i,p_{i+2}\}$ for $i=1,2,3$, $\{p_1,p_2,p_3\}$, $\{p_3,p_4,p_5\}$, or $\{p_1,p_3,p_5\}$.

\begin{claim}\label{claim3}
    There must be a vertex adjacent to $\{p_1,p_2,p_3\}$ or $\{p_3,p_4,p_5\}$.  
\end{claim}
\begin{proof}
    Assume there is no such vertex. Then, considering the induced \textsf{paw} with vertices $\{v_1,v_2,v_3,v_4\}$, notice that $V(P_5)\cap \{v_1,v_2,v_3\}=\emptyset$ to avoid an induced $G_{6,7}$, $G_{6,5}$, a house graph, or an induced $G_{6,12}$. Similarly, $v_4\notin P_5$. Moreover, similarly to the previous claim, we can show that there must be a forbidden induce subgraph in the subgraph induce by $V(\textsf{paw})\cap V(P_5)$.
\end{proof}

By Claim \ref{claim3}, assume that there is a vertex $v$ adjacent to $\{p_1,p_2,p_3\}$. Then, if there is a vertex $u$ adjacent to $v$, then $V(P_5)\cup\{u,v\}$ induces $P_5^{(-1,1,0,0,0)}$, or $P_5^{(0,1,-1,0,0)}$, or $P_5^{(0,2,0,0,0)}$. Moreover, there is no vertex adjacent to $u$ non-adjacent to $P_5$. On the other hand, if there is a vertex $w$ non-adjacent to $v$, then $V(P_5)\cup\{w,v\}$ induces a $P_5^{(0,1,0,0,-1)}$ or a $P_5^{(0,1,0,1,0)}$. Furthermore, there is no vertex adjacent to $w$ non-adjacent to $P_5$. Thus, $G$ must a $P_5^{(-m_1,n_2,-m_2,n_2,-m_3)}$ with $n_i,m_j\geq 0$ and $m_2\geq 1$ since there is an induced $C_4$. 
and this concludes the proof.

\end{proof}

\section{Graphs with at most 2 trivial distance ideals}\label{section:characterization}

In this section, we will complete the characterization of $\Lambda_2^\mathbb{Z}$, the graphs with at most 2 trivial distance ideals over $\mathbb{Z}[X]$. 
In order to complete this characterization, it remains to prove that the graphs outlined in Theorem~\ref{theo:main theorem2} possess a nontrivial third distance ideal. 
This aspect will be rigorously established through this section yielding Theorem~\ref{theo:main theorem}.

The main tool of this section will be Lemma~\ref{lem:deltaideal} that states that if $\Delta_i(D_X(G)|_{X={\bf d}})\neq 1$ for some ${\bf d}\in \mathbb{Z}^{n}$, then the distance ideal $I_i^\mathbb{Z}(G)$ is not trivial.
On the other hand, the next lemma implies that if $M$ and $N$ are two equivalent integer matrices, then $\Delta_i(M)=\Delta_i(N)$ for $i\geq0$.

\begin{lemma}\label{lem:norcot}\cite[Theorem 3]{Nbook}
    If $ M$ and $ N$ are two equivalent matrices with entries in a commutative ring, then, for each $i\geq0$, the determinantal ideals generated by the $i$-minors of $ M$ and $ N$ coincide, that is, for each $i\geq0$, the ideals generated by $minors_i({ M})$ and $minors_i({ N})$ are equal.
\end{lemma}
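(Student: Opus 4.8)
The plan is to deduce the equality of the determinantal ideals from the Cauchy--Binet formula together with a symmetry observation, which has the advantage of working verbatim over an arbitrary commutative ring $\mathfrak{R}$ rather than only over a field. Since $M$ and $N$ being equivalent means $M=PNQ$ with $P$ and $Q$ unimodular, the inverses $P^{-1}$ and $Q^{-1}$ again have entries in $\mathfrak{R}$, so that $N=P^{-1}MQ^{-1}$ is also an equivalence of the same type. Consequently the relation is symmetric in $M$ and $N$, and it suffices to prove a single inclusion, say $\langle\minors_i(M)\rangle\subseteq\langle\minors_i(N)\rangle$; applying the same argument to $N=P^{-1}MQ^{-1}$ then yields the reverse inclusion, and hence equality, for every $i\geq 0$.

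For the inclusion itself, the key tool is the Cauchy--Binet formula for a product $C=AB$ of matrices over $\mathfrak{R}$: for any row index set $S$ and column index set $T$ with $|S|=|T|=i$, one has $\det(C_{S,T})=\sum_{|U|=i}\det(A_{S,U})\det(B_{U,T})$, where the sum ranges over the $i$-subsets $U$ of the inner index set (both sides being $0$ when $i$ exceeds the inner dimension). Reading this identity with the $A$-minors regarded as coefficients shows that every $i$-minor of $AB$ is an $\mathfrak{R}$-linear combination of $i$-minors of $B$, so $\langle\minors_i(AB)\rangle\subseteq\langle\minors_i(B)\rangle$; reading it with the roles reversed gives $\langle\minors_i(AB)\rangle\subseteq\langle\minors_i(A)\rangle$.

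I would then apply this twice along the factorization $M=P(NQ)$. Viewing $M$ as the product of $P$ with $NQ$ gives $\langle\minors_i(M)\rangle\subseteq\langle\minors_i(NQ)\rangle$, and viewing $NQ$ as the product of $N$ with $Q$ gives $\langle\minors_i(NQ)\rangle\subseteq\langle\minors_i(N)\rangle$. Chaining the two inclusions produces $\langle\minors_i(M)\rangle\subseteq\langle\minors_i(N)\rangle$, and the symmetric argument finishes the proof; the degenerate cases $i=0$ and $i$ larger than the order of the matrices are immediate, using the conventions that the empty minor equals $1$ and that an empty family of minors generates the zero ideal. The only point demanding genuine care is that Cauchy--Binet be invoked in the correct generality: it is a formal polynomial identity in the entries of $A$ and $B$, so it holds over every commutative ring, and crucially no divisions, rank hypotheses, or field properties intervene. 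That is precisely what makes the statement survive the passage to rings like $\mathbb{Z}[X]$ that are relevant here.
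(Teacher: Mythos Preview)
Your argument is correct: Cauchy--Binet is a polynomial identity valid over any commutative ring, so each $i$-minor of $PNQ$ lies in $\langle\minors_i(N)\rangle$, and unimodularity of $P,Q$ gives the symmetric inclusion. The paper itself supplies no proof of this lemma---it is quoted directly from Northcott's \emph{Finite Free Resolutions}---and the Cauchy--Binet route you take is the standard one (and essentially what Northcott does), so there is nothing to contrast.
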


First, we take a closer look to bipartite graphs. 
In order to prove that every bipartite graph belongs to the family of graphs with at most two distance ideals, we first demonstrate that every $3\times 3$ minor in the distance matrix is an even number. 
Since bipartite graphs may have an arbitrary diameter, we will look at their distance matrix modulo 2. 
Thus, by Lemma~\ref{lem:norcot}, the parity of g.c.d. of the $k$-minors in the reduced matrix is the same as the parity of the g.c.d of the $k$-minors in the initial matrix.
Then, we will be able to prove that $\Delta_{3}(D(G))=2$ for every bipartite graph $G$ with the exception of $K_{2,2}$. 

\begin{lemma}\label{lemma:3minors}
    Every $3$-minor of the distance matrix of a connected bipartite graph is an even number.
\end{lemma}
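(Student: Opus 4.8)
The plan is to reduce the whole computation modulo $2$ and to exploit the rigid parity structure that distances enjoy in a bipartite graph. First I would fix a bipartition $V(G)=A\sqcup B$ (which is unique up to swapping the two parts since $G$ is connected) and recall the elementary fact that in a bipartite graph every walk joining two vertices of the same part has even length, whereas every walk joining vertices in different parts has odd length. Since $d_G(u,v)$ is the length of a shortest path, and a path is in particular a walk, this forces $d_G(u,v)\equiv 0\pmod 2$ whenever $u$ and $v$ lie in the same part, and $d_G(u,v)\equiv 1\pmod 2$ otherwise. The point of working modulo $2$ is precisely that this statement is uniform in the (possibly large) diameter of $G$: the entry $D(G)_{uv}\bmod 2$ depends only on whether $u$ and $v$ lie in the same part, not on the actual value of the distance.

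With this reduction in hand, the minor estimate becomes a one-line pigeonhole argument. Consider any $3\times 3$ submatrix of $D(G)$, with rows indexed by vertices $u_1,u_2,u_3$ and columns by $v_1,v_2,v_3$. Among the three row-vertices, two of them---say $u_i$ and $u_j$---must lie in the same part. Then for every column vertex $v_k$ the two entries $d_G(u_i,v_k)$ and $d_G(u_j,v_k)$ have the same parity, because each parity is determined solely by whether $v_k$ lies in the same part as $u_i$ (equivalently $u_j$). Hence rows $i$ and $j$ of the submatrix are congruent modulo $2$, so its determinant vanishes modulo $2$; that is, the corresponding $3$-minor is even. Conceptually this is the statement that $D(G)\bmod 2$ is the adjacency pattern of the complete bipartite graph $K_{|A|,|B|}$, namely the block matrix $\left(\begin{smallmatrix}0 & J\\ J^{\top} & 0\end{smallmatrix}\right)$ over $\mathbb{F}_2$; all rows coincide within each part, so its $\mathbb{F}_2$-rank is at most $2$ and every $3\times 3$ minor is zero over $\mathbb{F}_2$.

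The only real obstacle is to make the parity claim completely airtight, in particular to confirm it for same-part vertices at distance larger than $2$ and to justify that passing to $D(G)\bmod 2$ is legitimate (here one should cite, or reprove, that shortest-path parity equals walk parity, which is exactly where bipartiteness enters). Beyond that, the argument has no computational content. I would also remark that this is precisely the input needed downstream: by Lemma~\ref{lem:norcot} the parities of the $3$-minors are preserved under the row and column reductions used to compute $\Delta_3$, so establishing that all $3$-minors are even is the first step toward showing that $\Delta_3(D(G))$ is even for every connected bipartite graph, with $K_{2,2}$ isolated separately as the exceptional case where the value is pinned down directly.
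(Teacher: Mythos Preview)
Your proof is correct and follows the same core strategy as the paper: reduce $D(G)$ modulo $2$ using the fact that distances in a bipartite graph are even within each part and odd across parts, obtaining the block matrix $\left(\begin{smallmatrix}0 & J\\ J & 0\end{smallmatrix}\right)$ over $\mathbb{F}_2$, and then argue that all its $3\times 3$ minors vanish. The only difference lies in how that last vanishing is established. The paper applies explicit elementary row and column operations to reduce the matrix to $\left(\begin{smallmatrix}0 & 1\\ 1 & 0\end{smallmatrix}\right)\oplus 0$ and then (implicitly via Lemma~\ref{lem:norcot}) concludes that every $3$-minor is zero modulo $2$. You instead use the direct pigeonhole observation that among any three row-vertices two lie in the same part and hence give identical rows modulo $2$, so the $3\times 3$ determinant is zero. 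Your route is slightly more elementary and self-contained, needing no appeal to invariance of determinantal ideals under equivalence; the paper's route, on the other hand, simultaneously exhibits the exact $\mathbb{F}_2$-rank of the reduction.
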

\begin{proof}
Let $G$ be a bipartite graph with $a+b$ vertices where $a$ and $b$ are the orders of the partitions. 
Observe that the distance between two vertices within the same partition is even. 
In contrast, the distance between vertices in different partitions is odd. 
Then the distance matrix of $G$ with entries taken modulo 2 has the following form:
\[ 
{\sf L} := \begin{bmatrix}
    {\sf 0}_{a,a} & {\sf J}_{a,b} \\
    {\sf J}_{b,a} & {\sf 0}_{b,b}
\end{bmatrix},
\]
where ${\sf 0}_{n,m}$ and ${\sf J}_{n,m}$ denote an $n\times m$ all-zeros matrix and all-ones matrix, respectively. 
From matrix {\sf L}, it is easy to determine the parity of $3$-minors of the distance matrix $D(G)$ of $G$. 
To matrix ${\sf L}$, apply the following elementary operations. 
For each $i\in\lbrace 2,\dots, a\rbrace$, subtract first row to $i${\it-th} row. 
Similarly, subtract $(a+1)${\it-th} row to $(a+i')${\it-th} row for each $i'=2,\dots,b$. 
Subsequently, subtract the first column from each subsequent column associated with vertices in the same partition, and then subtract the column $a+1$ from all columns $a+j'$ for $j'=2,\dots, b$. 
Finally, interchange column $a+1$ with column $2$, and row $a+1$ with the second row. 
The obtained matrix is equivalent to 
$$
\begin{bmatrix}
    0&1&0&\cdots&0\\
    1&0&0&\cdots&0\\
    0&0&0&\cdots&0\\
    \vdots&\vdots&\vdots&\ddots&\vdots\\
    0&0&0&\cdots&0\\
\end{bmatrix}
$$ 
Therefore, $\text{det}({\sf L})=0$. 
\end{proof}

\begin{theorem}\label{theo:3thideals}
    Let $G$ be a connected bipartite graph with at least 4 vertices, with the exception of $K_{2,2}$, then $\Delta_3(D(G))$, the g.c.d. of the $3$-minors of $D(G)$, is equal to 2. 
\end{theorem}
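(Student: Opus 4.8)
The plan is to combine Lemma~\ref{lemma:3minors} with a concrete witness to pin down $\Delta_3$ exactly. Lemma~\ref{lemma:3minors} already shows every $3$-minor of $D(G)$ is even, so $2 \mid \Delta_3(D(G))$. To get equality, I would exhibit a single $3\times 3$ submatrix of $D(G)$ whose determinant equals $\pm 2$, or more robustly whose determinant is $2m$ with $m$ odd; then $\Delta_3$ divides $2m$ and is a multiple of $2$, and combined with the fact that $\Delta_2 \mid \Delta_3$ I can force $\Delta_3 = 2$. The cleanest route is to find three vertices $u,v,w$ realizing small distances. Since $G$ is connected bipartite with at least $4$ vertices and $G \neq K_{2,2}$, I would first argue that $G$ contains an induced $P_4$ or an induced path/configuration on which the relevant $3\times 3$ distance submatrix has odd cofactor structure, guaranteeing the minor is exactly twice an odd number.

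The key steps, in order, are as follows. First, invoke Lemma~\ref{lemma:3minors} to conclude $2 \mid \Delta_3(D(G))$, so it suffices to produce a $3$-minor equal to $\pm 2$ up to an odd factor. Second, reduce to finding a good triple of vertices: I would pick an edge $xy$ (so $d(x,y)=1$) together with a third vertex $z$ at distance $2$ from $x$, which exists and can be chosen with $d(y,z)=1$ whenever $G$ has a path on three edges or a vertex of the right degree; the submatrix on $\{x,y,z\}$ with appropriate diagonal zeros has the form $\begin{bmatrix} 0 & 1 & 2 \\ 1 & 0 & 1 \\ 2 & 1 & 0 \end{bmatrix}$, whose determinant is $4$, or a nearby configuration giving determinant $2$. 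Third, I would handle the exceptional structure: the reason $K_{2,2}$ is excluded is that in $K_{2,2}$ all off-diagonal distances are $1$ or $2$ in a symmetric pattern that makes every $3$-minor divisible by a higher power of $2$ (indeed $\Delta_3(D(K_{2,2}))=4$), so the genuine case analysis is verifying that any other bipartite graph on $\geq 4$ vertices admits a triple whose $3\times 3$ distance determinant is exactly $2$ times an odd number.

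The main obstacle is the second part of the equality, namely ruling out that $\Delta_3$ is a larger even number such as $4$; this is exactly where the exclusion of $K_{2,2}$ becomes essential. Establishing $2 \mid \Delta_3$ is free from Lemma~\ref{lemma:3minors}, but showing $\Delta_3 \mid 2$ requires producing a minor of value $\pm 2$, and many of the obvious small distance submatrices (like the one above) have determinant $4$. So the delicate work is a careful search over triples of vertices to find one whose $3\times 3$ distance determinant has $2$-adic valuation exactly one. I expect this to hinge on choosing two vertices in the same partition at distance $2$ together with a vertex in the other partition adjacent to exactly one of them, producing a determinant like $\det\begin{bmatrix} 0 & 2 & 1 \\ 2 & 0 & 3 \\ 1 & 3 & 0 \end{bmatrix}$ or similar, and verifying its valuation; the case analysis must show such a configuration is available in every connected bipartite graph on at least four vertices other than $K_{2,2}$.

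Since this exceptional case is the crux, I would organize the argument as a short structural dichotomy: either $G$ has a vertex of degree at least $3$ or a path of length $3$, in which case an explicit triple gives a minor of valuation one; or $G$ is $2$-regular, i.e. an even cycle $C_{2k}$, which for $2k \geq 6$ I treat directly using its known distance structure, with $C_4 = K_{2,2}$ being precisely the excluded graph. This reduces the whole theorem to finitely many explicit determinant computations plus the generic witness described above.
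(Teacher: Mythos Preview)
Your overall strategy matches the paper's: use Lemma~\ref{lemma:3minors} for the lower bound $2\mid\Delta_3(D(G))$, then exhibit a $3$-minor of $2$-adic valuation exactly one. But the concrete mechanism you propose cannot work, and this is a genuine gap rather than a detail to be filled in.

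You repeatedly speak of choosing a \emph{triple of vertices} and examining the resulting $3\times 3$ submatrix. Any such submatrix is a \emph{principal} minor, hence of the form
\[
\begin{pmatrix} 0 & a & b \\ a & 0 & c \\ b & c & 0 \end{pmatrix},\qquad \det = 2abc.
\]
In a bipartite graph at least two of the three chosen vertices lie in the same part, so at least one of $a,b,c$ is even, forcing $4\mid 2abc$. Your own test matrix illustrates this: $\det\begin{psmallmatrix}0&2&1\\2&0&3\\1&3&0\end{psmallmatrix}=12$, not valuation one. Consequently no ``careful search over triples of vertices'' can ever produce a $3$-minor of valuation one; your dichotomy (degree $\ge 3$, induced $P_4$, or even cycle) locates the right induced subgraphs but the payoff you expect from them is impossible.

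The paper resolves this by passing to \emph{non-principal} $3\times 3$ minors, which requires four vertices. It takes any connected $4$-vertex induced subgraph $H$ (necessarily $K_{1,3}$, $P_4$, or $K_{2,2}$), uses bipartite parity to argue that $d_H=d_G$ on $V(H)$, and then exhibits an explicit non-principal $3$-minor equal to $2$ inside $D(K_{1,3})$ and $D(P_4)$ (e.g.\ rows $\{1,2,c\}$, columns $\{1,3,c\}$ in $K_{1,3}$). When $H=K_{2,2}$ and $G\neq K_{2,2}$, a fifth vertex yields an induced $P_4$ or an induced $K_{2,3}$, reducing to the previous cases. Your plan is salvageable once you replace ``triple of vertices'' by ``three rows and three (different) columns from a four-vertex configuration,'' which is exactly the missing idea.
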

\begin{proof} 
By Lemma~\ref{lemma:3minors}, it is sufficient to prove that the least positive number appearing as $3$-minor is $2$. 
Let $H$ be any induced connected bipartite subgraph of $G$ with $4$ vertices. 
Then $H$ is either $K_{1,3}$, $P_{4}$ or $K_{2,2}$. 
Bipatiteness of $G$ implies that distance between any pair of vertices in $H$ cannot be reduced in $G$. 
The distance matrix of each graph, $K_{1,3}$ and $P_{4}$, contains the following $3\times 3$ submatrices with determinant $2$:
\begin{equation*}
    \begin{array}{cc}
        \begin{bmatrix}
        0&2&1\\
        2&2&1\\
        1&1&0
    \end{bmatrix} &  \begin{bmatrix}
        0&1&1\\
        2&3&1\\
        1&2&0
    \end{bmatrix}\\
         K_{1,3}&P_{4} 
    \end{array}
\end{equation*}
Suppose $H=K_{2,2}$. 
Since $G\neq K_{2,2}$ and it is connected, there exists $w\in V(G\setminus H)$ adjacent to a vertex $x$ in $H$.
Suppose $x,y,z\in H$ such that $x,z$ are in the same partition, and $w$ is in the same partition as $y$.
If the edge $wz\notin E(G)$, we finish by considering the induced path obtained by $x,y,z,w$. 
The remaining case leads to the complete bipartite graph $K_{2,3}$, in which distances in $H$ cannot be reduced by $G$. 
A direct calculation proves that $2$ is a $3$-minor of $D(K_{2,3})$.  
\end{proof}

The remaining bipartite graph with four vertices, $K_{2,2}$, 
satisfies $\Delta_{3}(D(K_{2,2}))=4$.
And $\det(D(P_{3}))=4$. 

Therefore, $\Phi_{\mathbb{Z}}(G)=2$ for every bipartite graph $G$. 
In consequence, the family $\Lambda_{2}^\mathbb{Z}$ of graphs with at most two trivial distance ideals over $\mathbb{Z}[X]$ contains the connected bipartite graphs. 
Thus
$${\sf trees}\subseteq\text{\sf bipartite-graphs}\subseteq \Lambda_{2}^\mathbb{Z}.$$

Since the second invariant factor of bipartite graphs equals $1$, from Theorem~\ref{theo:3thideals} and Proposition~\ref{prop:eval1}, we get the following result. 

\begin{corollary}
    The third invariant factor of the distance matrix for any connected bipartite graph is equal to 2, except by $K_{2,2}$ and $P_{3}$. 
    The third invariant factor of the distance matrix of $K_{2,2}$ and $P_{3}$ is $4$.
\end{corollary}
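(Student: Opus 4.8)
The plan is to recover the invariant factors of $D(G)$ from the determinantal divisors $\Delta_i(D(G))$ by means of the identity $f_i(D(G)) = \Delta_i(D(G))/\Delta_{i-1}(D(G))$ recalled in Section~\ref{sec:SNF_DI} (with $\Delta_0 = 1$). In particular the third invariant factor is $f_3(D(G)) = \Delta_3(D(G))/\Delta_2(D(G))$, so the whole statement reduces to computing $\Delta_2$ and $\Delta_3$.

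First I would pin down $\Delta_1$ and $\Delta_2$. Because $G$ is connected and bipartite (hence has at least one edge), any edge $uv$ exhibits the $2\times 2$ submatrix $\left[\begin{smallmatrix} 0 & 1 \\ 1 & 0 \end{smallmatrix}\right]$ of $D(G)$, of determinant $-1$; therefore $\Delta_2(D(G)) = 1$, and a fortiori $\Delta_1(D(G)) = 1$. Equivalently, this says that the first two invariant factors of a connected bipartite graph are equal to $1$, a fact that also follows from Proposition~\ref{prop:eval1} together with the already established inclusion of the bipartite graphs in $\Lambda_2^\mathbb{Z}$, which makes $I_1^\mathbb{Z}(G)$ and $I_2^\mathbb{Z}(G)$ trivial. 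Consequently $f_3(D(G)) = \Delta_3(D(G))$.

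It then remains to insert the value of $\Delta_3$. For a connected bipartite graph on at least four vertices other than $K_{2,2}$, Theorem~\ref{theo:3thideals} yields $\Delta_3(D(G)) = 2$ and hence $f_3 = 2$, which is the generic assertion. The two small exceptions are settled by direct computation: for $K_{2,2}$ one has $\Delta_3(D(K_{2,2})) = 4$, while the distance matrix of $P_3$ is $3\times 3$ with $\det D(P_3) = 4$, so its single $3$-minor gives $\Delta_3(D(P_3)) = 4$; in both cases $f_3 = \Delta_3/\Delta_2 = 4$.

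Since Theorem~\ref{theo:3thideals} already carries all of the analytic weight, I do not expect a genuine obstacle. The only point demanding a little care is the bookkeeping that $\Delta_2(D(G)) = 1$, so that passing from the determinantal divisor $\Delta_3$ to the invariant factor $f_3$ introduces no spurious factor; this is precisely where bipartiteness (guaranteeing an edge, and thus a $2\times 2$ minor equal to $\pm 1$) enters.
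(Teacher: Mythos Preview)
Your proposal is correct and follows the same route as the paper: invoke Theorem~\ref{theo:3thideals} for $\Delta_3$, observe that $\Delta_2(D(G))=1$ (equivalently, the second invariant factor is $1$), and conclude $f_3=\Delta_3/\Delta_2$. The paper merely states the line ``since the second invariant factor of bipartite graphs equals $1$'' without justification, whereas you supply the elementary reason via the $2\times 2$ minor coming from an edge; note, though, that it is connectedness (and having at least two vertices) rather than bipartiteness per se that guarantees the existence of an edge.
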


From which follows the next interesting observation.

\begin{corollary}
    The invariant factors greater than 1 of the distance matrix of any bipartite graph are even.
\end{corollary}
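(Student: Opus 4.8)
The plan is to reduce the whole statement to the divisibility structure of the invariant factors together with the preceding corollary, which has already pinned down the third invariant factor. The single structural fact I would invoke is that the invariant factors of an integer matrix form a divisibility chain $f_1 \mid f_2 \mid \cdots \mid f_r$; consequently, once the \emph{first} invariant factor exceeding $1$ is shown to be even, every later one is a multiple of it and hence automatically even.

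First I would dispose of the small cases. A connected bipartite graph on at most two vertices is $K_1$ or $K_2$, whose distance matrices have all invariant factors equal to $1$, so there is nothing to prove; thus I may assume $G$ has at least three vertices, where $D(G)$ has rank at least $3$ and an invariant factor $f_3$ genuinely exists.

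Next I would use the fact, established just above, that every connected bipartite graph lies in $\Lambda_2^\mathbb{Z}$ with $\Phi_{\mathbb{Z}}(G)=2$. By Proposition~\ref{prop:eval1} this forces $\Delta_1(D(G))=\Delta_2(D(G))=1$, equivalently $f_1(D(G))=f_2(D(G))=1$, while the preceding corollary gives $f_3(D(G))\in\{2,4\}$. Hence $f_3$ is precisely the first invariant factor that is greater than $1$, and it is even.

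Finally, I would close the argument with the divisibility chain: since $f_3\mid f_j$ for all $j\ge 3$, each such $f_j$ is a multiple of the even number $f_3$ and is therefore even. As $f_1=f_2=1$, the invariant factors greater than $1$ are exactly $f_3,\dots,f_r$, all of them even. The hard part has already been done in the previous two results; there is essentially no further obstacle here beyond correctly identifying $f_3$ as the first nontrivial invariant factor, after which the conclusion is immediate from the elementary divisibility property of the Smith normal form.
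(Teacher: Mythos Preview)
Your proof is correct and follows exactly the reasoning the paper leaves implicit: once the preceding corollary gives $f_1=f_2=1$ and $f_3\in\{2,4\}$, the divisibility chain $f_3\mid f_4\mid\cdots$ immediately forces every invariant factor beyond the second to be even. One tiny slip: for $K_1$ the distance matrix is $[0]$, so its sole invariant factor is $0$ rather than $1$, but the conclusion that there is nothing to prove still holds vacuously.
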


In particular, the determinant of the distance matrix of a connected bipartite graph is even.
Therefore, this suggests that it is possible to extend, to bipartite graphs, Graham-Pollak-Lovász celebrated formula $\det(D(T_{n+1}))=(-1)^nn2^{n-1}$, see \cite{GP}, and Hou-Woo result \cite{HW} stating that $\SNF(D(T_{n+1}))=\sf{I}_2\oplus 2\sf{I}_{n-2}\oplus (2n)$, for any tree $T_{n+1}$ with $n+1$ vertices. 

In the following, we will finish to prove that the graphs considered in Theorem~\ref{theo:main theorem} encompass the entire family $\Lambda_{2}^\mathbb{Z}$. 
The graphs considered in Theorem~\ref{theo:main theorem} are obtained by the process of ``blowing up'' vertices of a graph into a clique or an independent set. 
We will see that this operation just adds zeros to the Smith normal form of the base graph. 

Given two integer vectors ${\sf a}\in\mathbb{Z}^{r}$ and ${\sf b}\in\mathbb{Z}^{s}$, the vector ${\sf a}\oplus{\sf b}\in\mathbb{Z}^{r+s}$ is the result of the concatenation of the vector ${\sf b}$ after the vector ${\sf a}$. 
We will use the notation ${\sf a}_{r}$ to refer to the vector $(a,a,\dots, a)\in\mathbb{Z}^{r}$. 
For example, ${\sf 1}_{3}\oplus{\sf 4}_{5}=(1,1,1,4,4,4,4,4)$. 

\begin{proposition}\label{prop:complete tripartites SNF}

Let $p,q,r$ be three arbitrary integers. 
If ${\sf d}={\sf 2}_{p}\oplus{\sf 2}_{q}\oplus{\sf 2}_{r}$, then the SNF of $D_X(K_{p,q,r})|_{X=\sf d}$ is equal to $\diag(1,1,4,0,\dots,0)$.   
\end{proposition}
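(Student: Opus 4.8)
The plan is to write out the integer matrix $M := D_X(K_{p,q,r})|_{X={\sf d}}$ explicitly and then reduce it, by unimodular row and column operations, to a $3\times 3$ core whose Smith normal form can be computed by hand. In $K_{p,q,r}$ any two vertices in the same part are nonadjacent but share a neighbour in one of the other parts, so they lie at distance $2$, whereas vertices in different parts are adjacent and hence at distance $1$. Evaluating at ${\sf d}={\sf 2}_{p}\oplus{\sf 2}_{q}\oplus{\sf 2}_{r}$ places $2$ on the diagonal (the $(u,u)$ entry of $D_X$ is $x_u+d(u,u)=x_u$), so $M$ has the block form
\[
M=\begin{bmatrix}
2{\sf J}_{p,p} & {\sf J}_{p,q} & {\sf J}_{p,r}\\
{\sf J}_{q,p} & 2{\sf J}_{q,q} & {\sf J}_{q,r}\\
{\sf J}_{r,p} & {\sf J}_{r,q} & 2{\sf J}_{r,r}
\end{bmatrix},
\]
where the diagonal blocks are $2{\sf J}$ because both the diagonal entries and the within-part off-diagonal entries equal $2$.

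The key observation is that within each part all rows of $M$ are identical, and likewise all columns. First I would subtract the first row of each part from the remaining rows of that part; this zeros out rows $2,\dots,p$, then $p+2,\dots,p+q$, and then $p+q+2,\dots,p+q+r$. Next I would perform the analogous column operations using the first column of each part. These are integer row/column operations, so by Lemma~\ref{lem:norcot} they preserve all determinantal ideals, and after carrying them out the matrix is zero except on the $3\times 3$ submatrix supported on the three representative rows and columns, namely
\[
C=\begin{bmatrix} 2 & 1 & 1\\ 1 & 2 & 1\\ 1 & 1 & 2\end{bmatrix}={\sf I}_3+{\sf J}_3.
\]
Consequently $\SNF(M)=\SNF(C)\oplus\diag(0,\dots,0)$, and the rank collapse to $3$ forced by the repeated rows and columns is the conceptual heart of the argument.

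It then remains to compute $\SNF(C)$, which I would do through determinantal divisors: $\Delta_1(C)=1$ since $C$ has an entry equal to $1$; $\Delta_2(C)=1$ because $C$ contains the $2\times 2$ minor $\det\begin{bmatrix}2&1\\1&1\end{bmatrix}=1$; and $\Delta_3(C)=\det C=4$, as the eigenvalues of ${\sf I}_3+{\sf J}_3$ are $4,1,1$. Hence the invariant factors are $f_1=1$, $f_2=1$, $f_3=\Delta_3/\Delta_2=4$, giving $\SNF(C)=\diag(1,1,4)$ and therefore $\SNF(M)=\diag(1,1,4,0,\dots,0)$, as claimed.

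The remaining points are bookkeeping rather than genuine obstacles. One must check that the distance computation is valid for every $p,q,r\geq 1$: the common-neighbour argument needs all three parts nonempty, which holds, and in degenerate cases such as $p=q=r=1$ (where $K_{1,1,1}=K_3$) the matrix $M$ already equals $C$. One must also carry out the reductions in a consistent order so that the surviving $3\times 3$ block is exactly $C$ and so that the zeroed rows and columns are genuinely eliminated; once this is done the final computation is entirely elementary.
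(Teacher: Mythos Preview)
Your proposal is correct and follows essentially the same approach as the paper: reduce $D_X(K_{p,q,r})|_{X={\sf d}}$ by subtracting the first row (and column) of each part from the remaining ones to collapse everything onto the $3\times 3$ core $C=\begin{bmatrix}2&1&1\\1&2&1\\1&1&2\end{bmatrix}$, then compute $\SNF(C)=\diag(1,1,4)$. Your computation of $\SNF(C)$ via determinantal divisors and the eigenvalues of ${\sf I}_3+{\sf J}_3$ is more explicit than the paper's ``easy to check,'' but the argument is otherwise identical.
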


\begin{proof}
    The generalized distance matrix $D_X(K_{p,q,r})$ of $K_{p,q,r}$ is 
\begin{equation*}
\begin{bmatrix}
    {\sf 2J}-(2-x){\sf I}&{\sf J}&{\sf J}\\
    {\sf J}&{\sf 2J}-(2-x){\sf I}&{\sf J}\\
    {\sf J}&{\sf J}&{\sf 2J}-(2-x){\sf I}
\end{bmatrix}
\end{equation*} where {\sf I} is the identity matrix and {\sf J} is the all-ones matrix of suitable size. 
We can apply elementary operations to the matrix $D_X(K_{p,q,r})|_{X=\sf d}$ as follows. For $i\in\lbrace 2,\dots, p\rbrace$, subtract row 1 from row $i$. Similarly, for $j\in\lbrace 2,\dots, q\rbrace$, subtract the row $p+1$ from the row $p+j$ and, finally, subtract the row $p+q+1$ from each row $p+q+k$ for each $k=2,\dots, r$. After performing these operations, continue subtracting column 1 from column $i$ for each $i=2,\dots, p$, column $p+1$ all columns $p+j$, $j=2,\dots, q$ and subtract column $p+q+1$ to columns $p+q+k$ for $k=2,\dots, r$. Finally, interchange column $p+1$ with the second column and rise row $p+1$ to the second row. Similarly, take the row $p+q+1$ to the third row and the column $p+q+1$ to the third row. 
After these finite elementary operations, we obtain
\begin{equation*}
D_X(K_{p,q,r})|_{X=\sf d}\sim\begin{bmatrix}
    2&1&1&0&\cdots&0\\
    1&2&1&0&\cdots&0\\
    1&1&2&0&\cdots&0\\
    0&0&0&0&\cdots&0\\
    \vdots&\vdots&\vdots&\vdots&\ddots&\vdots\\
    0&0&0&0&\cdots&0\\
\end{bmatrix}.
\end{equation*} 
Notice that the block 
\begin{equation*}
    \begin{bmatrix}
    2&1&1\\
    1&2&1\\
    1&1&2\\
    \end{bmatrix}
\end{equation*} is just $D_X(C_{3})|_{X=\sf u}$, where ${\sf u}=(2,2,2)$. 
Essentially, a complete tripartite graph is $C_{3}$ after the operation of blowing up each vertex in an independent set of vertices. 
It is easy to check that the SNF of $D_X(C_{3})|_{X=\sf u}$ is $\diag(1,1,4)$.
Therefore, 
\begin{equation*}
D_X(K_{p,q,r})|_{X=\sf d}\sim\begin{bmatrix}
    1&0&0&0&\cdots&0\\
    0&1&0&0&\cdots&0\\
    0&0&2&0&\cdots&0\\
    0&0&0&0&\cdots&0\\
    \vdots&\vdots&\vdots&\vdots&\ddots&\vdots\\
    0&0&0&0&\cdots&0\\
\end{bmatrix}.
\end{equation*}
\end{proof}

As a consequence, we have the following.

\begin{corollary}
    Complete tripartite graphs are contained in $\Lambda_{2}^\mathbb{Z}$. 
\end{corollary}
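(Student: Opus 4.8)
The plan is to establish that complete tripartite graphs lie in $\Lambda_2^\mathbb{Z}$ by exhibiting a single integer evaluation point where the generalized distance matrix has exactly two invariant factors equal to $1$, and then invoking the machinery already developed. The key observation, which Proposition~\ref{prop:complete tripartites SNF} has just supplied, is that for ${\sf d}={\sf 2}_p\oplus{\sf 2}_q\oplus{\sf 2}_r$ the matrix $D_X(K_{p,q,r})|_{X={\sf d}}$ has Smith normal form $\diag(1,1,4,0,\dots,0)$. Consequently $\phi(D_X(K_{p,q,r})|_{X={\sf d}})=2$, that is, exactly two invariant factors equal $1$.

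First I would cite Corollary~\ref{coro:eval1}, which asserts that $\Lambda_k^\mathbb{Z}$ contains every graph $G$ with $\phi(D_X(G)|_{X={\bf d}})\leq k$ for some integer vector ${\bf d}$. Applying this with $k=2$, $G=K_{p,q,r}$, and ${\bf d}={\sf d}$ as above, the equality $\phi(D_X(K_{p,q,r})|_{X={\sf d}})=2$ read off from the Smith normal form immediately places $K_{p,q,r}$ in $\Lambda_2^\mathbb{Z}$. Since $p,q,r$ were arbitrary, every complete tripartite graph is in $\Lambda_2^\mathbb{Z}$, which is the assertion of the corollary.

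I would also note, for completeness, that this bound is sharp: the presence of the invariant factor $4$ in the third position guarantees via Lemma~\ref{lem:deltaideal} (or directly via Proposition~\ref{prop:eval1}) that $I_3^\mathbb{Z}(K_{p,q,r})$ is nontrivial, so in fact $\Phi_\mathbb{Z}(K_{p,q,r})=2$ exactly, and these graphs are not merely contained in $\Lambda_2^\mathbb{Z}$ but genuinely realize two trivial distance ideals. This matches the forbidden-subgraph picture, since Theorem~\ref{theo:main theorem2} lists complete tripartite graphs among the $\{\mathcal{F},\text{\sf odd-holes}_7\}$-free graphs.

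There is essentially no obstacle here: the corollary is a direct deduction from Proposition~\ref{prop:complete tripartites SNF} together with Corollary~\ref{coro:eval1}, so the ``proof'' is a one-line application of a general evaluation principle to a Smith normal form that has already been computed. The only care required is to confirm that the chosen evaluation ${\bf d}$ is a genuine integer vector (it is, being a vector of $2$'s) so that Corollary~\ref{coro:eval1} applies over $\mathbb{Z}$; all the real work was done in computing the Smith normal form in the preceding proposition.
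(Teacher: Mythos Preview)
Your proposal is correct and matches the paper's approach exactly: the paper presents this corollary without an explicit proof, simply stating it as a consequence of Proposition~\ref{prop:complete tripartites SNF}, and you have spelled out the one-line deduction via Corollary~\ref{coro:eval1} (equivalently Lemma~\ref{lem:deltaideal}) that the paper leaves implicit. Your additional remark that the bound is sharp is accurate and not in the paper, but it is a harmless elaboration.
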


\begin{proposition}
    Let ${\sf d}={\sf 2}_{n-p}\oplus{\sf 1}_{p}$ for positive integers $n,p$ with $n>p$. 
    Then, $\SNF(D_X(K_{n-p,1,\dots,1})|_{X=\sf d})=\diag(1,1,0,\dots,0)$, and the complete multipartite graph $K_{n-p,1,1,\dots,1}$ with $n$ vertices belongs to $\Lambda_{2}^\mathbb{Z}$
\end{proposition}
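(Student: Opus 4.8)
The plan is to mirror exactly the computation already carried out in Proposition~\ref{prop:complete tripartites SNF} for complete tripartite graphs, adapting it to the graph $K_{n-p,1,\dots,1}$ with one ``big'' partition of size $n-p$ and $p$ singleton partitions. The key structural observation is that in $K_{n-p,1,\dots,1}$ the distance between two vertices lying in the single large independent set is $2$, whereas every other pair of vertices is adjacent (distance $1$), since all singleton parts are mutually adjacent and each is adjacent to every vertex of the big part.

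First I would write down the generalized distance matrix $D_X(K_{n-p,1,\dots,1})$ explicitly, ordering the vertices so that the $n-p$ vertices of the large part come first, followed by the $p$ singletons. With the evaluation ${\sf d}={\sf 2}_{n-p}\oplus{\sf 1}_{p}$, the diagonal entries become $2$ on the large block and $1$ on the singleton block. In block form the evaluated matrix is
\begin{equation*}
D_X(K_{n-p,1,\dots,1})|_{X=\sf d}=
\begin{bmatrix}
    {\sf 2J}-{\sf I} & {\sf J}\\
    {\sf J} & {\sf J}
\end{bmatrix},
\end{equation*}
where the top-left block is $(n-p)\times(n-p)$ and the bottom-right $p\times p$ block ${\sf J}$ already has $1$'s on its diagonal because each singleton is at distance $1$ from every other singleton and the evaluation supplies a $1$ on the diagonal.

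Next I would apply the same sequence of elementary row-and-column operations as in Proposition~\ref{prop:complete tripartites SNF}: subtract row $1$ from rows $2,\dots,n-p$, subtract row $n-p+1$ from rows $n-p+2,\dots,n$, then do the analogous column subtractions. These operations annihilate all rows and columns except a small surviving block coming from the ``representative'' vertices of each partition. Because here one representative suffices from the singleton side and one from the large side, the surviving nonzero block collapses to the $2\times 2$ matrix $D_X(K_{2}^{\,\text{rep}})|_{X=\sf u}$; concretely, after the reductions one is left with $\begin{bmatrix}2&1\\1&1\end{bmatrix}$ bordered by zeros. A direct $2\times 2$ computation shows this block has Smith normal form $\diag(1,1)$ (its determinant is $1$), so the full matrix is equivalent to $\diag(1,1,0,\dots,0)$.

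Having established $\SNF(D_X(K_{n-p,1,\dots,1})|_{X=\sf d})=\diag(1,1,0,\dots,0)$, membership in $\Lambda_{2}^\mathbb{Z}$ follows immediately from Corollary~\ref{coro:eval1}: the evaluated matrix has exactly two invariant factors equal to $1$, so $\phi(D_X(G)|_{X=\sf d})=2$, whence $\Phi(G)\le 2$ and $G\in\Lambda_{2}^\mathbb{Z}$. The only real subtlety to verify carefully is that the blow-up of singleton vertices genuinely contributes only zeros to the Smith normal form and does not create an extra unit invariant factor; I would confirm this by tracking that each duplicated row (both on the large side and among the singletons) becomes identically zero after subtracting its representative, exactly as in the tripartite case. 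I do not anticipate a genuine obstacle, since the computation is a direct specialization of the already-proved Proposition~\ref{prop:complete tripartites SNF} with the third partition removed and the second reduced to size $p$.
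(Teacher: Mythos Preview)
Your approach is essentially identical to the paper's: write the evaluated matrix in block form, reduce by subtracting representative rows/columns to leave the $2\times 2$ block $\begin{bmatrix}2&1\\1&1\end{bmatrix}$, compute its SNF, and invoke the evaluation bound (the paper cites Lemma~\ref{lem:deltaideal}, you cite Corollary~\ref{coro:eval1}; either works). One small slip: the top-left block of the evaluated matrix is $2{\sf J}$, not $2{\sf J}-{\sf I}$ (you even say the diagonal entries become $2$, which is inconsistent with $2{\sf J}-{\sf I}$); this does not affect the rest of the argument, since your surviving $2\times 2$ block is correct.
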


\begin{proof}
    The evaluation of the generalized distance matrix of $K_{n-p,1,\dots,1}$ at $X$ equal to the vector ${\sf d}={\sf 2}_{n-p}\oplus{\sf 1}_{p}$ is
    $$\begin{bmatrix}
        2{\sf J}_{n-p,n-p}&{\sf J}_{n-p,p}\\
        {\sf J}_{p,n-p}&{\sf J}_{p,p}
    \end{bmatrix}.$$ 
    It is easy to see that $D_X(K_{n-p,1,\dots,1})|_{X=\sf d}$ is equivalent to the following matrix 
    $$\begin{bmatrix}
        2&1&0&\cdots&0\\
        1&1&0&\cdots&0\\
        0&0&0&\cdots&0\\
        \vdots&\vdots&\vdots&\ddots&\vdots\\
        0&0&0&\cdots&0\\
    \end{bmatrix}.$$ 
    Since $$\SNF\begin{bmatrix}
        2&1\\
        1&1\\
    \end{bmatrix}=\diag(1,1)$$ then, $\SNF(D_X(K_{n-p,1,\dots,1})|_{X=\sf d})=\diag(1,1,0,\dots,0).$ 
    By Lemma~\ref{lem:deltaideal}, $K_{n-p,1,1,\dots,1}$ belongs to $\Lambda_{2}^\mathbb{Z}$.
\end{proof}

Let $\Psi:=P_5^{(-m_{1},n_{1},-m_{2},n_{2},-m_{3})}$ be the graph described by the following diagram
\begin{center}
\begin{tikzpicture}[scale=1.4,thick]
\tikzstyle{every node}=[minimum width=0pt, inner sep=2pt, circle]
\draw (-2,0) node (v1) [draw, rectangle] {$\overline K_{m_1}$};
\draw (-1,0) node (v2) [draw] {$K_{n_1}$};
\draw (0,0) node (v3) [draw, rectangle] {$\overline K_{m_2}$};
\draw (1,0) node (v4) [draw] {$K_{n_2}$};
\draw (2,0) node (v5) [draw,rectangle] {$\overline K_{m_3}$};
\draw (v1) -- (v2) -- (v3) -- (v4) -- (v5);
\end{tikzpicture}, 
\end{center} 
where $\overline K_{m}$ and $K_{n}$ denotes an independent set of size $m$ and a complete graph with $n$ vertices, respectively. 
Any edge in the diagram between $\overline K_{m}$ and $K_{n}$ means that each vertex in an edge $\overline K_{m}$ is adjacent to each vertex in $K_{n}$. 

\begin{proposition}
    Suppose $m_{1}, n_{1}, m_{2}, n_{2}, m_{3}>0$. 
    Let ${\sf d}={\sf 2}_{m_{1}}\oplus{\sf 1}_{n_{1}}\oplus{\sf 2}_{m_{2}}\oplus{\sf 1}_{n_{2}}\oplus{\sf 2}_{m_{3}}$.
    Then $\SNF(D_X(\Psi)|_{X=\sf d})=\diag(1,1,2,2,0,0,\dots,0)$ and $\Psi\in\Lambda_{2}^{\mathbb{Z}}$.
\end{proposition}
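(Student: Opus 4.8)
The plan is to follow the template of Proposition~\ref{prop:complete tripartites SNF}: exhibit the single integer evaluation ${\sf d}$ at which the generalized distance matrix reduces, by elementary row and column operations, to a small core matrix bordered by zeros, and then read off the Smith normal form. The point is that ${\sf d}={\sf 2}_{m_{1}}\oplus{\sf 1}_{n_{1}}\oplus{\sf 2}_{m_{2}}\oplus{\sf 1}_{n_{2}}\oplus{\sf 2}_{m_{3}}$ is calibrated so that the diagonal indeterminate of each blown-up part matches the internal distance of that part.

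First I would write $D_X(\Psi)|_{X={\sf d}}$ in its natural $5\times 5$ block form indexed by the five parts $\overline K_{m_1}, K_{n_1}, \overline K_{m_2}, K_{n_2}, \overline K_{m_3}$. The key observation is that, after evaluation, every block is a constant matrix: a clique part $K_{n_i}$ (internal distance $1$, diagonal $1$) yields the block ${\sf J}$, an independent-set part $\overline K_{m_i}$ (internal distance $2$, diagonal $2$) yields $2{\sf J}$, and the off-diagonal block between parts $i$ and $j$ is ${\sf J}$ scaled by the path distance $|i-j|$ (all parts are nonempty, since the parameters are positive, so distances are exactly the path distances between parts). Consequently all rows lying in a common part are identical, and likewise all columns. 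Subtracting, within each part, the first row from the remaining rows and the first column from the remaining columns annihilates every redundant row and column. This is the ``blowing up adds zeros'' principle: the outcome is $M\oplus{\sf 0}$, where $M=D_X(P_5)|_{X=(2,1,2,1,2)}$ is the $5\times 5$ generalized distance matrix of the underlying path.

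Next I would compute $\SNF(M)$ directly. A short sequence of elementary operations clears the two unit pivots arising from the clique parts and reduces the remaining $3\times 3$ block, whose entries are all even, to $\diag(2,2,0)$; in particular $\det M=0$, so $M$ has rank $4$ and $\SNF(M)=\diag(1,1,2,2,0)$. By Lemma~\ref{lem:norcot} the reduction in the previous paragraph preserves all determinantal gcds, hence $\SNF(D_X(\Psi)|_{X={\sf d}})=\diag(1,1,2,2,0,\dots,0)$.

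Finally, since $\Delta_3(D_X(\Psi)|_{X={\sf d}})=1\cdot 1\cdot 2=2\neq 1$, Lemma~\ref{lem:deltaideal} yields that $I_3^{\mathbb{Z}}(\Psi)$ is not trivial, whence $\Phi_{\mathbb{Z}}(\Psi)\leq 2$ and $\Psi\in\Lambda_2^{\mathbb{Z}}$. The only genuinely delicate step is the collapse to the $5\times 5$ core: it succeeds precisely because ${\sf d}$ is tuned to the within-part distances, so each diagonal block becomes a rank-one multiple of ${\sf J}$; were the diagonal values not equal to the internal distances, the blocks would fail to be constant and the clean reduction to $M$ would break down. Everything else is routine verification of the small Smith normal form.
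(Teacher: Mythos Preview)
Your proposal is correct and follows essentially the same approach as the paper: display the block form of $D_X(\Psi)|_{X={\sf d}}$, collapse each constant block to a single representative via row and column subtractions, identify the resulting $5\times 5$ core with $D_X(P_5)|_{X=(2,1,2,1,2)}$, and compute its Smith normal form $\diag(1,1,2,2,0)$. Your added explanation of why the particular evaluation ${\sf d}$ makes every diagonal block a rank-one multiple of ${\sf J}$ is a helpful gloss, but the argument itself is the paper's.
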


\begin{proof}
The generalized distance matrix of $\Psi$ evaluated at $X={\sf d}$ is
\begin{equation*}
D_X(\Psi)|_{X=\sf d}=\begin{bmatrix}
2{\sf J}&{\sf J}&2{\sf J}&3{\sf J}&4{\sf J}\\
{\sf J}&{\sf J}&{\sf J}&2{\sf J}&3{\sf J}\\
2{\sf J}&{\sf J}&2{\sf J}&{\sf J}&2{\sf J}\\
3{\sf J}&2{\sf J}&{\sf J}&{\sf J}&{\sf J}\\
4{\sf J}&3{\sf J}&2{\sf J}&{\sf J}&2{\sf J}
\end{bmatrix}.
\end{equation*} 
Therefore, after applying elementary operations, we obtain that that 
$$D_X(\Psi)|_{X=\sf d}\sim\begin{bmatrix}
    2&1&2&3&4&0&\cdots&0\\
    1&1&1&2&3&0&\cdots&0\\
    2&1&2&1&2&0&\cdots&0\\
    3&2&1&1&1&0&\cdots&0\\
    4&3&2&1&2&0&\cdots&0\\
    0&0&0&0&0&0&\cdots&0\\
    \vdots&\vdots&\vdots&\vdots&\vdots&\vdots&\ddots&\vdots\\
    0&0&0&0&0&0&\cdots&0\\
\end{bmatrix}.$$
The submatrix of left-hand side matrix, formed by taking the first 5 rows and 5 columns, is precisely the evaluation of the generalized distance matrix of $P_{5}$ evaluated at $X={\sf u}$, where $u=(2,1,2,1,2)$. 
A direct computation shows that the $\SNF$ of $D_X(P_{5})|_{X=\sf u}$ is $\diag(1,1,2,2,0)$. 
Therefore, $\SNF(D_X(\Psi)|_{X=\sf d})=\diag(1,1,2,2,0,0,\dots,0)$.
From which also follows that $\Psi\in\Lambda_{2}^{\mathbb{Z}}$.
\end{proof}

\begin{corollary}
    If $H$ is a connected induced subgraph of $\Psi$, then $H\in\Lambda_{2}^{\mathbb{Z}}$.
\end{corollary}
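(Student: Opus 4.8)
The plan is to avoid a fresh Smith normal form computation and instead deduce the corollary from the monotonicity of distance ideals under connected induced subgraphs of a distance-hereditary graph (Lemma~\ref{lem:dishere}). The first step is to observe that $\Psi=P_5^{(-m_1,n_1,-m_2,n_2,-m_3)}$ is itself distance-hereditary. Indeed, $\Psi$ is obtained from the path $P_5$ by replacing each vertex with a clique (for the blocks $K_{n_i}$) or with an independent set (for the blocks $\overline{K}_{m_i}$); replacing a vertex by a clique of size $k$ amounts to successively adding $k-1$ true twins, while replacing it by an independent set of size $k$ amounts to adding $k-1$ false twins. Since $P_5$ is a tree, hence distance-hereditary, and the addition of a true or false twin preserves the class of distance-hereditary graphs (the Bandelt--Mulder one-vertex-extension characterization), $\Psi$ is distance-hereditary.

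Next I would let $H$ be a connected induced subgraph of $\Psi$ and split into two cases. If $|V(H)|\le 2$, then $H$ has no $3\times 3$ submatrix, so $I_3^{\mathbb Z}(H)=\langle 0\rangle$ is nontrivial and $\Phi_{\mathbb Z}(H)\le 2$, i.e.\ $H\in\Lambda_2^{\mathbb Z}$. If instead $3\le |V(H)|$, then Lemma~\ref{lem:dishere} applies with $i=3$ and gives the inclusion $I_3^{\mathbb Z}(H)\subseteq I_3^{\mathbb Z}(\Psi)$ inside $\mathbb Z[X_\Psi]$. The preceding proposition shows $\Psi\in\Lambda_2^{\mathbb Z}$, which means precisely that $I_3^{\mathbb Z}(\Psi)$ is a \emph{proper} ideal of $\mathbb Z[X_\Psi]$. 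A subideal of a proper ideal is proper: if $1\in I_3^{\mathbb Z}(H)$ then $1\in I_3^{\mathbb Z}(\Psi)$, contradicting properness. Hence $I_3^{\mathbb Z}(H)$ is nontrivial as well, so $\Phi_{\mathbb Z}(H)\le 2$ and $H\in\Lambda_2^{\mathbb Z}$.

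The only delicate point in this route is the justification that $\Psi$ is distance-hereditary; once that is granted the rest is immediate. If one prefers to stay self-contained and avoid the twin characterization, the corollary can be proved by exactly the same evaluation-and-reduction technique as in the three preceding propositions. Here one first notes that a connected induced subgraph $H$ of $\Psi$ must keep its nonempty blocks in a \emph{contiguous} sub-interval of the path (connectivity forbids deleting an interior block while retaining blocks on both sides), so $H$ is a blow-up $P_j^{\mathbf b}$ of a sub-path $P_j$ with $j\le 5$ and the inherited clique/independent pattern. Evaluating $D_X(H)$ at the vector ${\sf d}$ that assigns $1$ to each vertex of a clique block and $2$ to each vertex of an independent block turns every diagonal block into a multiple of ${\sf J}$ and every off-diagonal block into a constant multiple of ${\sf J}$ (the constant being the path distance between the blocks). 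The block-wise row and column subtractions used before then collapse each block to one representative, giving $D_X(H)|_{X={\sf d}}\sim D_X(P_j)|_{X={\sf u}}\oplus{\sf 0}$, where ${\sf u}$ is the corresponding contiguous subvector of $(2,1,2,1,2)$. By Lemma~\ref{lem:norcot} this yields $\Delta_3(D_X(H)|_{X={\sf d}})=\Delta_3(D_X(P_j)|_{X={\sf u}})$, and a direct check of the finitely many base matrices (those for $\mathbf{u}=(2,1,2),(1,2,1),(2,1,2,1),(1,2,1,2),(2,1,2,1,2)$, plus the rank-deficient cases $j\le 2$) gives $\Delta_3\in\{0,2,4\}$, never $1$. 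Lemma~\ref{lem:deltaideal} then completes the argument.
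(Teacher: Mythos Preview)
Your argument is correct and matches the paper's approach: the paper's proof simply asserts that $\Psi$ is distance-hereditary and then invokes the induced-subgraph monotonicity lemma (Lemma~\ref{lemma:inducemonotone}, equivalently Lemma~\ref{lem:dishere} in this setting). Your additional justification that $\Psi$ is distance-hereditary via successive twin additions, as well as the alternative self-contained route, are both sound elaborations beyond what the paper provides.
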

\begin{proof}
    Since the graph $\Psi$ is distance-hereditary, then the result follows from Lemma~\ref{lemma:inducemonotone}.
\end{proof}

Finally, consider the graph $\Omega:=P_4^{(n_{1},-m_{1},-m_{2},n_{2})}$ described by the diagram
\begin{center}
\begin{tikzpicture}[scale=1.4,thick]
\tikzstyle{every node}=[minimum width=0pt, inner sep=2pt, circle]
\draw (-1,0) node (v2) [draw] {$K_{n_1}$};
\draw (0,0) node (v3) [draw, rectangle] {$\overline K_{m_1}$};
\draw (1,0) node (v4) [draw,rectangle] {$\overline K_{m_2}$};
\draw (2,0) node (v5) [draw] {$K_{n_2}$};
\draw (v2) -- (v3) -- (v4) -- (v5);
\end{tikzpicture}.
\end{center}

\begin{proposition}
    Let ${\sf d}={\sf 1}_{n_{1}}\oplus{\sf 2}_{m_{1}}\oplus{\sf 2}_{m_{2}}\oplus{\sf 1}_{n_{2}}$. 
    Then $\SNF(D_X(\Omega)|_{X=\sf d})=(1,1,3,3,0,0,\dots,0)$.
    Moreover, $\Omega\in\Lambda_{2}^\mathbb{Z}$.
\end{proposition}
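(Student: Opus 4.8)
The plan is to follow exactly the template established by the three preceding propositions, since $\Omega=P_4^{(n_1,-m_1,-m_2,n_2)}$ is another blow-up of a short path. First I would write down the generalized distance matrix $D_X(\Omega)|_{X=\sf d}$ in block form. Since $\Omega$ consists of a clique $K_{n_1}$, two independent sets $\overline{K}_{m_1},\overline{K}_{m_2}$, and a clique $K_{n_2}$ arranged along $P_4$, the pairwise distances are: $0$ within the cliques and $2$ within the independent sets (these come from the diagonal entries of $\sf d$), $1$ between the first clique and $\overline K_{m_1}$, $2$ between $K_{n_1}$ and $\overline K_{m_2}$, $3$ between $K_{n_1}$ and $K_{n_2}$, and so on. With the evaluation ${\sf d}={\sf 1}_{n_1}\oplus{\sf 2}_{m_1}\oplus{\sf 2}_{m_2}\oplus{\sf 1}_{n_2}$ placing the correct diagonal value on each block, every block becomes a scalar multiple of the all-ones matrix $\sf J$, giving
\begin{equation*}
D_X(\Omega)|_{X=\sf d}=\begin{bmatrix}
{\sf J}&{\sf J}&2{\sf J}&3{\sf J}\\
{\sf J}&2{\sf J}&2{\sf J}&3{\sf J}\\
2{\sf J}&2{\sf J}&2{\sf J}&{\sf J}\\
3{\sf J}&3{\sf J}&{\sf J}&{\sf J}
\end{bmatrix}.
\end{equation*}

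Next I would perform the same row/column reduction used in the earlier blow-up propositions: within each of the four blocks, subtract the first row of the block from the others and the first column from the others. Because every block is a constant times $\sf J$, subtracting one row (column) from another parallel row (column) annihilates it, so all repeated rows and columns collapse to zero. After permuting the four surviving ``representative'' rows and columns to the top-left, the matrix becomes equivalent to a $4\times4$ core bordered by zeros, where the core is precisely $D_X(P_4)|_{X=\sf u}$ with ${\sf u}=(1,2,2,1)$, namely
\begin{equation*}
\begin{bmatrix}
1&1&2&3\\
1&2&2&3\\
2&2&2&1\\
3&3&1&1
\end{bmatrix}.
\end{equation*}
This mirrors the observation in the $\Psi$ and $K_{p,q,r}$ cases that blowing up a vertex only introduces additional zero rows and columns into the Smith normal form, and the nontrivial part is governed by the small base path.

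The final step is a direct computation of $\SNF$ of this $4\times4$ core. I would reduce it over $\mathbb{Z}$ and verify it equals $\diag(1,1,3,3)$, which then yields $\SNF(D_X(\Omega)|_{X=\sf d})=\diag(1,1,3,3,0,\dots,0)$ after reincorporating the zero rows and columns. Since the third invariant factor is $3\neq1$, Lemma~\ref{lem:deltaideal} gives that $I_3^\mathbb{Z}(\Omega)$ is nontrivial, so $\Phi_\mathbb{Z}(\Omega)=2$ and $\Omega\in\Lambda_2^\mathbb{Z}$. I do not anticipate a genuine obstacle here: the argument is entirely parallel to the preceding propositions, and the only point requiring care is the bookkeeping of the exact distance values assigned to each block (in particular confirming that the intended diagonal evaluation $\sf d$ matches the geometric distances so that every block really is a scalar multiple of $\sf J$). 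Once the block structure is correct, the reduction and the final $4\times4$ Smith normal form computation are routine. As with $\Psi$, one could alternatively note that $\Omega$ is distance-hereditary and invoke Lemma~\ref{lemma:inducemonotone} to extend the conclusion to all connected induced subgraphs of $\Omega$.
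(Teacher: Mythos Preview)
Your overall strategy is exactly the paper's: write $D_X(\Omega)|_{X=\sf d}$ as a $4\times4$ block matrix with each block a scalar multiple of $\sf J$, collapse repeated rows and columns to reduce to the $4\times4$ core $D_X(P_4)|_{X=(1,2,2,1)}$, compute its SNF, and invoke Lemma~\ref{lem:deltaideal}.

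However, you have miscomputed two of the off-diagonal blocks. In $\Omega=P_4^{(n_1,-m_1,-m_2,n_2)}$ the independent sets $\overline{K}_{m_1}$ and $\overline{K}_{m_2}$ are \emph{adjacent} along the base $P_4$, so the distance between a vertex of $\overline{K}_{m_1}$ and a vertex of $\overline{K}_{m_2}$ is $1$, not $2$; consequently the distance from $\overline{K}_{m_1}$ to $K_{n_2}$ is $2$, not $3$. The correct block matrix (as in the paper) is
\[
\begin{bmatrix}
{\sf J}&{\sf J}&2{\sf J}&3{\sf J}\\
{\sf J}&2{\sf J}&{\sf J}&2{\sf J}\\
2{\sf J}&{\sf J}&2{\sf J}&{\sf J}\\
3{\sf J}&2{\sf J}&{\sf J}&{\sf J}
\end{bmatrix},
\]
with core $D_X(P_4)|_{X=(1,2,2,1)}$ having $(2,3)$-entry $1$ and $(2,4)$-entry $2$. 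This error is not cosmetic: your $4\times4$ core has $3$-minors $-2$ (rows/columns $1,2,3$) and $-9$ (rows/columns $2,3,4$), so $\Delta_3=1$ and its SNF is $\diag(1,1,1,9)$, which would \emph{not} show $I_3^{\mathbb Z}(\Omega)$ is nontrivial. With the corrected core one checks $\Delta_3=3$, $\Delta_4=9$, giving $\SNF=\diag(1,1,3,3)$ as claimed. Fix the two blocks and the rest of your argument goes through verbatim.
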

\begin{proof}
The matrix
\begin{equation*}
    D_X(\Omega)|_{X=\sf d}=\begin{bmatrix}
        {\sf J}&{\sf J}&2{\sf J}&3{\sf J}\\
        {\sf J}&2{\sf J}&{\sf J}&2{\sf J}\\
        2{\sf J}&{\sf J}&2{\sf J}&{\sf J}\\
        3{\sf J}&2{\sf J}&{\sf J}&{\sf J}\\
    \end{bmatrix}
\end{equation*} is equivalent to the matrix
$$\begin{bmatrix}
    1&1&2&3&0&\cdots&0\\
    1&2&1&2&0&\cdots&0\\
    2&1&2&1&0&\cdots&0\\
    3&2&1&1&0&\cdots&0\\
    0&0&0&0&0&\cdots&0\\
    \vdots&\vdots&\vdots&\vdots&\vdots&\ddots&\vdots\\
    0&0&0&0&0&\cdots&0\\
\end{bmatrix},$$
which contains as submatrix $D_X(P_{4})|_{X=(1,2,2,1)}$. 
A straightforward computation shows that $\SNF(D_X(P_{4})|_{X=(1,2,2,1)})=\diag(1,1,3,3)$. 
Thus, $D_X(\Omega)|_{X=\sf d}$ is equivalent to $$\diag(1,1,3,3,0,\dots,0),$$ and $\Omega\in\Lambda_{2}^\mathbb{Z}$.
\end{proof}
\begin{corollary}
    If $H$ is a connected induced subgraph of $\Omega$, then $H\in\Lambda_{2}^\mathbb{Z}$.
\end{corollary}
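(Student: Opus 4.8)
The final statement to prove is the corollary asserting that every connected induced subgraph $H$ of $\Omega := P_4^{(n_1,-m_1,-m_2,n_2)}$ belongs to $\Lambda_2^{\mathbb{Z}}$. The plan is to reduce this to the preceding proposition via a distance-hereditary argument, exactly parallel to the corollary proved for $\Psi$.

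First I would verify that $\Omega$ is distance-hereditary. The graph $\Omega$ is built from the path $P_4$ by replacing each vertex with a clique ($K_{n_1}$, $K_{n_2}$) or an independent set ($\overline{K}_{m_1}$, $\overline{K}_{m_2}$), where each replacement block is fully joined to its neighbors along the path. Such ``blow-up of a path'' graphs are cographs-of-paths / distance-hereditary; concretely, replacing a vertex by a clique (a \emph{true twin} operation) or by an independent set (a \emph{false twin} operation) preserves the distance-hereditary property, and $P_4$ itself is distance-hereditary. Hence $\Omega$ is distance-hereditary.

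Once $\Omega$ is known to be distance-hereditary, Lemma~\ref{lem:dishere} applies: for any connected induced subgraph $H$ of $\Omega$ we have $I_i^{\mathbb{Z}}(H) \subseteq I_i^{\mathbb{Z}}(G)$ for all $i \leq |V(H)|$, where I would instantiate $G = \Omega$. The key point is monotonicity of triviality under this inclusion: since $\Omega \in \Lambda_2^{\mathbb{Z}}$ by the preceding proposition (its third distance ideal is nontrivial, witnessed by $\Delta_3(D_X(\Omega)|_{X=\sf d}) = 3 \neq 1$), the inclusion $I_3^{\mathbb{Z}}(H) \subseteq I_3^{\mathbb{Z}}(\Omega) \neq \langle 1 \rangle$ forces $I_3^{\mathbb{Z}}(H)$ to be nontrivial as well. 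Therefore $\Phi_{\mathbb{Z}}(H) \leq 2$, i.e.\ $H \in \Lambda_2^{\mathbb{Z}}$.

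The only genuine obstacle is confirming that $\Omega$ is distance-hereditary, since the subgraph-monotonicity machinery (Lemma~\ref{lem:dishere}) is otherwise a black box that does the rest. This is the same step that the $\Psi$-corollary invokes without detailed justification, so I would simply assert it on the same footing, noting that the blow-up operations used to construct $\Omega$ from $P_4$ are precisely the twin-creation operations that characterize distance-hereditary graphs. No further computation is needed: the nontriviality of $I_3^{\mathbb{Z}}(\Omega)$ was already established, and triviality of lower ideals is inherited downward along the ideal inclusions, giving the result immediately.
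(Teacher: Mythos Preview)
Your proposal is correct and follows exactly the paper's approach: the paper's entire proof is the single line ``It follows since $\Omega$ is distance-hereditary,'' invoking the same monotonicity lemma you cite. Your additional justification that $\Omega$ is distance-hereditary (via iterated true/false twin operations starting from $P_4$) fills in a detail the paper leaves implicit, but the argument is otherwise identical.
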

\begin{proof}
    It follows since $\Omega$ is distance-hereditary.
\end{proof}

Finally, since any connected graph in $\Lambda_2^{\mathbb{Z}}$ is an $\{{\cal F}, \textsf{odd-holes}_7\}$-free graph and any such graph, all of them described in Theorem~\ref{theo:main theorem2}, has nontrivial third distance ideal as shown throughout this section. Then, our main result follows. 

\begin{theorem}\label{theo:main theorem}
    Let $G$ be a graph with $n$ vertices. Then $G \in \Lambda_2^{\mathbb{Z}}$ if and only if $G$ is one of the following:
    \begin{itemize}
        \item[i)] $C_5$;
        \item[ii)] a connected bipartite graph; 
        \item[iii)] a complete tripartite graph;
        \item[iv)] $K_{n-p+1,1,\ldots,1}$ where $p$ is the number of partitions;
        \item[v)] a connected induced subgraph of $P_5^{(-m_1,n_1,-m_2,n_2,-m_3)}$ with $m_1,n_1,m_2,n_2,m_3 \geq 0$;

        \item[vi)] or a connected induced subgraph of $P_4^{(n_1,-m_1,-m_2,n_2)}$ with $n_1,m_1,n_2,m_2 \geq 0$.
    \end{itemize}
\end{theorem}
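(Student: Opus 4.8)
The plan is to prove the equivalence by combining the forbidden-subgraph characterization already obtained with the evaluation principle of Lemma~\ref{lem:deltaideal}, treating the two implications separately. The implication $G\in\Lambda_2^{\mathbb Z}\Rightarrow$ (i)--(vi) is purely combinatorial and needs no new computation: by \cite[Theorem 23]{alfaro2} every graph in $\Lambda_2^{\mathbb Z}$ is $\{\mathcal F,\text{\sf odd-holes}_7\}$-free, and Theorem~\ref{theo:main theorem2} states that the connected $\{\mathcal F,\text{\sf odd-holes}_7\}$-free graphs are exactly the six listed families, so this direction is immediate.

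For the converse I would show that each family has a nontrivial third distance ideal, which by definition places it in $\Lambda_2^{\mathbb Z}$; the uniform tool is Lemma~\ref{lem:deltaideal}, so it suffices to exhibit, for each $G$, one integer vector $\mathbf d$ with $\Delta_3\bigl(D_X(G)|_{X=\mathbf d}\bigr)\neq 1$. For the blow-up families (iii)--(vi) I would take $\mathbf d$ equal to $2$ on every vertex coming from an independent-set part and to $1$ on every vertex coming from a clique part; the twin structure then lets elementary row and column operations annihilate all but one row and column of each block, collapsing $D_X(G)|_{X=\mathbf d}$ to the direct sum of a zero block and a small fixed matrix. For (iii), (v) and (vi) this fixed matrix is the generalized distance matrix of $C_3$, $P_5$ or $P_4$ at a constant vector, whose Smith normal form has third invariant factor $4$, $2$ and $3$; for (iv) it collapses to rank $2$, so $\Delta_3=0$. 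In every case $\Delta_3\neq 1$, which is precisely what Proposition~\ref{prop:complete tripartites SNF} and the three propositions following it establish. The induced-subgraph closure needed for (v) and (vi) is then free: since $\Psi=P_5^{(-m_1,n_1,-m_2,n_2,-m_3)}$ and $\Omega=P_4^{(n_1,-m_1,-m_2,n_2)}$ are distance-hereditary, Lemma~\ref{lem:dishere} gives $I_3^{\mathbb Z}(H)\subseteq I_3^{\mathbb Z}(\Psi)$ (resp.\ $\Omega$) for every connected induced subgraph $H$, and an ideal contained in a proper ideal is itself proper. Family (ii) is handled by Theorem~\ref{theo:3thideals}, whose reduction modulo $2$ shows $\Delta_3(D(G))\in\{2,4\}$ for every connected bipartite $G$; here no single constant evaluation suffices, because bipartite graphs have unbounded diameter, so the reduction to a finite base graph is replaced by this parity computation.

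The one case that resists the scheme above, and which I expect to be the main obstacle, is the sporadic graph $C_5$ of family (i). Its plain distance matrix already satisfies $\Delta_3(D(C_5))=1$ (the $3\times3$ submatrix on rows $\{1,2,3\}$ and columns $\{3,4,5\}$ has determinant $1$), so evaluating at $\mathbf d=\mathbf 0$ certifies nothing, and one checks that every constant real evaluation fails because $D(C_5)$ has no eigenvalue of multiplicity $\geq 3$ over $\mathbb Q$. The fix I would use is to shift and reduce modulo a prime: evaluating $D_X(C_5)$ at $\mathbf d=(5,5,5,5,5)$ gives the integer circulant $5\,{\sf I}+D(C_5)$, and since the simple distance eigenvalue $6$ becomes a root of $\lambda^2+3\lambda+1$ modulo $11$ (as $6^2+3\cdot6+1=55$), its multiplicity jumps to $3$; as $\mathbb F_{11}$ contains the fifth roots of unity ($5\mid 11-1$) the circulant is diagonalizable there, so $5\,{\sf I}+D(C_5)$ has rank $2$ modulo $11$, all its $3$-minors are divisible by $11$, and its Smith normal form is $\diag(1,1,11,11,11)$. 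In particular $\Delta_3\neq 1$, and Lemma~\ref{lem:deltaideal} gives that $I_3^{\mathbb Z}(C_5)$ is nontrivial. Assembling the six families then yields the stated equivalence; the genuine difficulty throughout is conceptual rather than combinatorial, namely locating for each family (and especially for $C_5$) an evaluation at which $\Delta_3$ provably fails to be a unit, since distance ideals are not monotone and the naive evaluation at the origin can be misleading.
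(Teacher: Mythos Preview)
Your proposal is correct and follows essentially the same route as the paper: the forward direction by combining \cite[Theorem~23]{alfaro2} with Theorem~\ref{theo:main theorem2}, and the backward direction by evaluating at the $2$-on-independent-sets/$1$-on-cliques pattern to collapse each blow-up family to its base graph (this is exactly Proposition~\ref{prop:complete tripartites SNF} and the three propositions that follow it), together with the parity argument of Theorem~\ref{theo:3thideals} for bipartite graphs and distance-heredity for the induced-subgraph closure in (v) and (vi). Your handling of $C_5$ is in fact more complete than the paper's: Section~\ref{section:characterization} never treats $C_5$ explicitly (the relevant computation $I_3^{t,\mathbb Z}(C_5)=\langle t+6,11\rangle$ appears only in Section~\ref{sec:6}, after the theorem), whereas your spectral argument at $\mathbf d=(5,5,5,5,5)$ modulo $11$ gives a self-contained proof that the Smith normal form is $\diag(1,1,11,11,11)$. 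One minor slip of wording: the collapsed evaluation vectors for $P_5$ and $P_4$ are $(2,1,2,1,2)$ and $(1,2,2,1)$, not constant.
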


In other words, we have shown that

\begin{theorem}\label{theo:main simple}
    Let $G$ be a connected graph with $n$ vertices. Then $G \in \Lambda_2^{\mathbb{Z}}$ if and only if $G$ is $\{{\cal F},\textsf{odd-hole}_7\}$-free.
\end{theorem}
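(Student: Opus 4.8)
The plan is to establish the stated equivalence by proving its two implications separately, observing at the outset that the forward implication is already available and that the reverse implication is a synthesis of the structural classification with the invariant-factor computations of Section~\ref{section:characterization}. For the ``only if'' direction, suppose $G\in\Lambda_2^{\mathbb{Z}}$. Then $G$ is $\{\mathcal{F},\text{\sf odd-holes}_7\}$-free by \cite[Theorem 23]{alfaro2}, recalled above; so this direction requires nothing new, and the substance of the theorem lies entirely in the converse.

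For the ``if'' direction, suppose $G$ is $\{\mathcal{F},\text{\sf odd-holes}_7\}$-free. First I would apply Theorem~\ref{theo:main theorem2}, which forces $G$ to be one of the six listed types: $C_5$, a connected bipartite graph, a complete tripartite graph, a graph $K_{n-p+1,1,\ldots,1}$, a connected induced subgraph of $\Psi=P_5^{(-m_1,n_1,-m_2,n_2,-m_3)}$, or a connected induced subgraph of $\Omega=P_4^{(n_1,-m_1,-m_2,n_2)}$. It then remains to verify that each type belongs to $\Lambda_2^{\mathbb{Z}}$, i.e.\ that its third distance ideal over $\mathbb{Z}[X]$ is nontrivial. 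By Lemma~\ref{lem:deltaideal} it suffices, in each case, to produce one evaluation ${\bf d}\in\mathbb{Z}^n$ with $\Delta_3(D_X(G)|_{X={\bf d}})\neq 1$, which is precisely what the results of Section~\ref{section:characterization} supply: the bipartite case via Theorem~\ref{theo:3thideals} (using the reduction modulo $2$ of Lemma~\ref{lemma:3minors}), the complete tripartite case via Proposition~\ref{prop:complete tripartites SNF}, the family $K_{n-p+1,1,\ldots,1}$ via its accompanying proposition, and the families $\Psi$ and $\Omega$ via the corresponding blow-up propositions together with their distance-hereditary corollaries. The latter corollaries are what let me descend to arbitrary connected induced subgraphs: since $\Psi$ and $\Omega$ are distance-hereditary, Lemma~\ref{lem:dishere} gives $I_3^{\mathbb{Z}}(H)\subseteq I_3^{\mathbb{Z}}(\Psi)$ for each connected induced subgraph $H$ with at least three vertices, so nontriviality of $I_3^{\mathbb{Z}}(\Psi)$ forces nontriviality of $I_3^{\mathbb{Z}}(H)$. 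The single type not of blow-up shape is $C_5$, which I would dispatch directly by computing the Smith normal form of its (circulant) distance matrix, as recorded for cycles in \cite{HW}, and checking that its third invariant factor exceeds $1$. Collecting these cases yields $G\in\Lambda_2^{\mathbb{Z}}$.

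The proof introduces no genuinely new obstacle at this final stage: the real difficulty is already absorbed into the structural classification of Theorem~\ref{theo:main theorem2} and, within Section~\ref{section:characterization}, into the bipartite case, where the arbitrary diameter prevents working with a single fixed submatrix and instead forces the passage to the distance matrix modulo $2$. What remains is careful bookkeeping: confirming that the six families output by Theorem~\ref{theo:main theorem2} coincide exactly with the six families shown to have a nontrivial third distance ideal (so that none is omitted), and that the degenerate small graphs such as $K_{2,2}$ and $P_3$, whose third invariant factor is $4$ rather than $2$, are correctly accounted for. Indeed, the cleanest way to phrase the conclusion is to observe that Theorem~\ref{theo:main theorem} and Theorem~\ref{theo:main theorem2} exhibit identical lists of graph types, whence the class $\Lambda_2^{\mathbb{Z}}$ and the class of $\{\mathcal{F},\text{\sf odd-holes}_7\}$-free graphs literally coincide.
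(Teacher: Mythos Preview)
Your proposal is correct and follows essentially the same approach as the paper: invoke \cite[Theorem~23]{alfaro2} for the forward direction, apply Theorem~\ref{theo:main theorem2} to reduce the converse to the six structural families, and then cite the propositions and distance-hereditary corollaries of Section~\ref{section:characterization} (via Lemma~\ref{lem:deltaideal}) to certify that each family has nontrivial third distance ideal. If anything, you are slightly more careful than the paper, which does not isolate $C_5$ explicitly in Section~\ref{section:characterization}; your appeal to the cycle computation in \cite{HW} closes that small gap.
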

    
\section{Graphs with at most two trivial distance ideals with coefficients over rational numbers}\label{sec:5}

Given a connected graph $G$, we have that $\Phi^{\mathbb{Z}} (G)\leq \Phi^{\mathbb{Q}}(G)$, therefore $\Lambda_{2}^{\mathbb{Q}}\subseteq \Lambda_{2}^{\mathbb{Z}}$.

In particular, notice that, with respect to ideals over the real numbers, $I_k^{\mathbb{R}} (G)=I_k^{\mathbb{Q}} (G)$. Moreover, we can extend this to any ring $\mathfrak{R}$ that contains the rational numbers. 

\begin{figure}[ht]
\centering
\begin{tabular}{c@{\extracolsep{1cm}}c@{\extracolsep{1cm}}c@{\extracolsep{1cm}}c}
 \begin{tikzpicture}[scale=1.4]\label{fig:forbidden graphs}
	\tikzstyle{every node}=[minimum width=0pt, inner sep=2pt, circle]
	\draw (-1,0) node (v1) [draw,rectangle] {\tiny $\overline K_2$};
	\draw (-.5,0) node (v2) [draw] {};
	\draw (-.25,0) node (v3) [draw] {};
	\draw (-0,0) node (v4) [draw] {};
	\draw (v1) -- (v2) -- (v3) -- (v4);
	\end{tikzpicture}    &
    \begin{tikzpicture}[scale=1.4]
 \tikzstyle{every node}=[minimum width=0pt, inner sep=2pt, circle]
	\draw (-1,0) node (v1) [draw] {};
	\draw (-.75,0) node (v2) [draw] {};
	\draw (-.50,0) node (v3) [draw] {};
	\draw (-0.25,0) node (v4) [draw] {};
        \draw (0,0) node (v5) [draw] {};
	\draw (v1) -- (v2) -- (v3) -- (v4) -- (v5);
	\end{tikzpicture}   &
    \begin{tikzpicture}[scale=1.4]
 \tikzstyle{every node}=[minimum width=0pt, inner sep=2pt, circle]
	\draw (-1,0) node (v1) [draw] {};
	\draw (-.5,0) node (v2) [draw,rectangle] {\tiny $\overline K_2$};
	\draw (0,0) node (v3) [draw] {};
	\draw (0.25,0) node (v4) [draw] {};
	\draw (v1) -- (v2) -- (v3) -- (v4);
	\end{tikzpicture} 
 &\begin{tikzpicture}[scale=1.4]
 \tikzstyle{every node}=[minimum width=0pt, inner sep=2pt, circle]
	\draw (-30:0.2) node (v1) [draw] {};
	\draw (30:0.35) node (v2) [draw] {};
	\draw (90:0.35) node (v3) [draw] {};
	\draw (150:0.35) node (v4) [draw] {};
        \draw (210:0.2) node (v5) [draw] {};
	\draw (v1) -- (v2) -- (v3) -- (v4) -- (v5) -- (v1);
	\end{tikzpicture}\\
   $H_1=P_4^{(-1,0,0,0)}$  & $P_5$ & $H_2=P_4^{(0,-1,0,0)}$& $C_5$\\
   &&&\\
    \begin{tikzpicture}[scale=1.4]
	\tikzstyle{every node}=[minimum width=0pt, inner sep=2pt, circle]
	\draw (-1,0) node (v2) [draw] {\tiny $K_{2}$};
	\draw (0,0) node (v3) [draw, rectangle] {\tiny $\overline K_{2}$};
	\draw (1,0) node (v4) [draw,rectangle] {\tiny $\overline K_{2}$};
	\draw (v2) -- (v3) -- (v4);
	\end{tikzpicture}&\begin{tikzpicture}[scale=1.4]
	\tikzstyle{every node}=[minimum width=0pt, inner sep=2pt, circle]
	\draw (-1,0) node (v2) [draw] {\tiny $K_{2}$};
	\draw (0,0) node (v3) [draw, rectangle] {\tiny $\overline K_{2}$};
	\draw (1,0) node (v4) [draw] {\tiny $K_{2}$};
	\draw (v2) -- (v3) -- (v4);
	\end{tikzpicture}
 &\begin{tikzpicture}[scale=1.4]
	\tikzstyle{every node}=[minimum width=0pt, inner sep=2pt, circle]
	\draw (-1,-1) node (v2) [draw, rectangle] {\tiny $\overline K_{2}$};
	\draw (0,0) node (v3) [draw, rectangle] {\tiny $\overline K_{2}$};
	\draw (1,-1) node (v4) [draw,rectangle] {\tiny $\overline K_{2}$};
	\draw (v2) -- (v3) -- (v4) -- (v2);
	\end{tikzpicture}
 &\\
       $H_3=P_3^{(1,-1,-1)}$ &$H_4=P_3^{(1,-1,1)}$&$K_{2,2,2}$&\\
\end{tabular}

 \caption{Forbidden induce subgraphs for $\Lambda_{2}^{\mathbb{Q}}$}
 \label{fig:reals}
\end{figure}
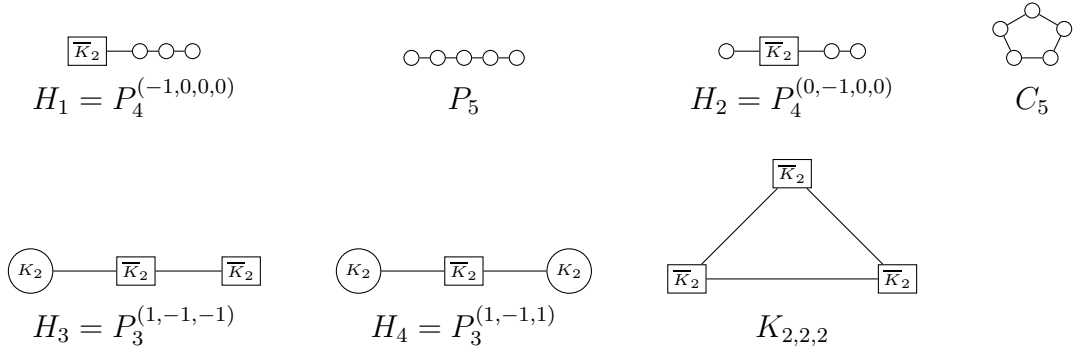
\begin{proposition}\label{prop:P5 is forb}
    The path graph $P_{5}$ is forbidden for $\Lambda_{2}^{\mathbb{Q}}$.
\end{proposition}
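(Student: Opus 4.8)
The plan is to show directly that the third distance ideal $I_3^{\mathbb{Q}}(P_5)$ over $\mathbb{Q}[X]$ equals $\langle 1\rangle$; by the definition of a forbidden graph, this is precisely what it means for $P_5$ to be forbidden for $\Lambda_2^{\mathbb{Q}}$. Labelling the vertices of $P_5$ consecutively along the path, the generalized distance matrix is
\[
D_X(P_5)=\begin{bmatrix}
x_1 & 1 & 2 & 3 & 4\\
1 & x_2 & 1 & 2 & 3\\
2 & 1 & x_3 & 1 & 2\\
3 & 2 & 1 & x_4 & 1\\
4 & 3 & 2 & 1 & x_5
\end{bmatrix}.
\]
The guiding idea is that over $\mathbb{Q}[X]$ every nonzero rational is a unit, so it suffices to exhibit a nonzero constant inside the ideal generated by the $3\times 3$ minors. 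Note that any three rows and three columns of a $5\times 5$ matrix must share at least one index, so every $3$-minor of $D_X(P_5)$ carries at least one diagonal variable; there is no purely constant minor, and the argument must instead combine two minors whose variable parts cancel.

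Concretely, I would single out two $3$-minors that involve only the single variable $x_2$. Taking rows $\{2,3,4\}$ and columns $\{1,2,5\}$ gives
\[
\det\begin{bmatrix} 1 & x_2 & 3\\ 2 & 1 & 2\\ 3 & 2 & 1\end{bmatrix}=4x_2,
\]
while rows $\{1,2,3\}$ and columns $\{2,4,5\}$ give
\[
\det\begin{bmatrix} 1 & 3 & 4\\ x_2 & 2 & 3\\ 1 & 1 & 2\end{bmatrix}=2-2x_2.
\]
Both are $3$-minors of $D_X(P_5)$, hence both lie in $I_3^{\mathbb{Q}}(P_5)$. From the first minor and the fact that $4$ is a unit in $\mathbb{Q}[X]$ we obtain $x_2\in I_3^{\mathbb{Q}}(P_5)$. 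Adding $2x_2$ to the second minor then yields $2=(2-2x_2)+2x_2\in I_3^{\mathbb{Q}}(P_5)$, and since $2$ is a unit over $\mathbb{Q}$ we conclude $1\in I_3^{\mathbb{Q}}(P_5)$, so the ideal is trivial and $P_5$ is forbidden for $\Lambda_2^{\mathbb{Q}}$.

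I expect no serious obstacle here; the only care needed is to verify that the two chosen $3\times 3$ submatrices avoid every diagonal entry except $x_2$, so that their determinants are the clean expressions $4x_2$ and $2-2x_2$ rather than polynomials carrying extra variables. It is worth emphasizing the subtlety that makes the statement genuinely rational: running the same computation over $\mathbb{Z}[X]$ would only produce $\langle 4x_2,\,2-2x_2,\dots\rangle$, and indeed $\Delta_3(D(P_5))=2$, so $I_3^{\mathbb{Z}}(P_5)$ is \emph{not} trivial — consistent with $P_5$ being bipartite and hence lying in $\Lambda_2^{\mathbb{Z}}$. Thus the content of the proposition rests entirely on the invertibility of $2$ and $4$ over $\mathbb{Q}$, which is exactly the leverage the combination of these two minors is designed to exploit.
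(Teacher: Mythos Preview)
Your computation is correct: the two $3$-minors $4x_2$ and $2-2x_2$ do lie in $I_3^{\mathbb{Q}}(P_5)$ and generate the unit ideal over $\mathbb{Q}[X]$, so $I_3^{\mathbb{Q}}(P_5)=\langle 1\rangle$. By the paper's formal definition of ``forbidden'' (namely, that the $(k+1)$-th distance ideal of the graph itself is trivial), this is exactly the proposition as literally stated, and your hand computation is cleaner than a computer check.

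Be aware, however, that the paper's own proof establishes something strictly stronger, and that stronger version is what is actually used afterwards. The paper shows that for \emph{every} graph $G$ containing $P_5$ as an induced subgraph one has $I_3^{\mathbb{Q}}(G)=\langle 1\rangle$. This does not follow automatically from your argument, because distance ideals are not monotone under induced subgraphs: $P_5$ has diameter $4$, so the diameter-$2$ monotonicity lemma does not apply, and when $P_5$ sits inside a larger $G$ the distances $d(u_0,u_3)$, $d(u_1,u_4)$, $d(u_0,u_4)$ may drop from $3,3,4$ down to $2$ or $3$. The paper therefore enumerates all eight possible reduced-distance patterns on $V(P_5)$ and verifies (by computer) that the corresponding $5\times 5$ submatrix of $D_X(G)$ still has trivial third ideal in each case. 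That is precisely what is invoked later to conclude that every $G\in\Lambda_2^{\mathbb{Q}}$ is $P_5$-free and hence distance-hereditary. Your argument handles only the ``distances unchanged'' case; to match the paper's intended use you would need to repeat your minor-combination trick for each of the remaining distance patterns.
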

\begin{proof}
    Let $G$ be a graph with $P_{5}$ as induced graph. We aim to prove that $I_{3}^{\mathbb{Q}}(G)=\lrangle{1}$. 
    Let $u_{0},u_{1},u_{2},u_{3},u_{4}$ be the vertices of $P_{5}$ labeled such that $u_{i}$ is adjacent to $u_{i+1}$. 
    An easy computation shows that $I_{3}^{\mathbb{Q}}(P_{5})=\lrangle{1}$, thus if $d_{P_{5}}(u_{i},u_{j})=d_{G}(u_{i},u_{j})$ for all $i,j=0,\dots, 4$, then it follows that $I_{3}^{\mathbb{Q}}(G)$ is trivial. 
    Since $P_{5}$ is an induced subgraph, distances in $P_{5}$ different from $1$ can not be equal $1$ in $G$. That is, if $uv\not\in E(P_{5})$ then $uv\not\in E(G)$. Therefore, the distances in $P_{5}$ equal to $2$ are preserved within $G$. The only distances that can be reduced from $P_{5}$ are those distances that are at least $3$. This yields several cases.
    \begin{enumerate}
        \item $d_{G}(u_{0},u_{4})=2$.
        \item $d_{G}(u_{0},u_{4})=2$ and $d_{G}(u_{0},u_{3})=2$.
        \item $d_{G}(u_{0},u_{4})=2$, $d_{G}(u_{0},u_{3})=2$ and $d_{G}(u_{1},u_{4})=2$.
        \item $d_{G}(u_{0},u_{4})=2$ and $d_{G}(u_{1},u_{4})=2$.
        \item Cases \textit{1,2,3,4} but $d_{G}(u_{0},u_{4})=3$.
    \end{enumerate}
    
    A computation for all eight cases is provided in Code~\ref{lst:P5} in Appendix~\ref{codes}.
\end{proof}

\begin{proposition}\label{prop:C5 is forb}
    The cycle graph $C_{5}$ is forbidden for $\Lambda_{2}^{\mathbb{Q}}$. 
\end{proposition}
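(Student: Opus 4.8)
The plan is to reduce the whole question to a single ideal computation for $C_5$ itself. The crucial observation is that $C_5$ has diameter $2$: every pair of its vertices is at distance $1$ or $2$. Hence, whenever $C_5$ occurs as an induced subgraph of a connected graph $G$, Lemma~\ref{lemma:distance2inducedmonotone} applies and gives $I_3^{\mathbb{Q}}(C_5)\subseteq I_3^{\mathbb{Q}}(G)$. Therefore it suffices to prove that $I_3^{\mathbb{Q}}(C_5)=\lrangle{1}$; the triviality then propagates to every $G$ containing an induced $C_5$, which is exactly what is needed to declare $C_5$ forbidden for $\Lambda_2^{\mathbb{Q}}$. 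This is a genuine simplification over the $P_5$ case: because $P_5$ has diameter $4$, its distances can collapse inside $G$ and one is forced into the case analysis of Proposition~\ref{prop:P5 is forb}, whereas for $C_5$ no case distinction is needed at all.

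First I would write down the generalized distance matrix $D_X(C_5)$, the circulant matrix with diagonal $x_0,\dots,x_4$ and off-diagonal pattern dictated by the distances $1$ and $2$ on the $5$-cycle. The key computation is to locate, among its $3\times 3$ minors, the five minors obtained from rows $\{i,i+1,i+2\}$ and columns $\{i+2,i+3,i+4\}$ (indices mod $5$); a direct expansion shows that each of these equals $2x_{i+2}+1$, so that $2x_0+1,\dots,2x_4+1$ all lie in $I_3^{\mathbb{Q}}(C_5)$. These five linear polynomials already cut the affine variety of $I_3^{\mathbb{Q}}(C_5)$ down to the single point $x_0=\dots=x_4=-\tfrac12$. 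It then remains to exhibit one further $3\times 3$ minor that does not vanish there: the principal minor on rows and columns $\{0,1,2\}$ equals $x_0x_1x_2-x_0-4x_1-x_2+4$, which evaluated at $x_i=-\tfrac12$ gives $\tfrac{55}{8}\neq 0$.

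From here the conclusion is immediate: the generators $2x_0+1,\dots,2x_4+1$ together with this last minor have no common zero over $\overline{\mathbb{Q}}$, so by the weak Nullstellensatz (and since triviality of an ideal is unaffected by extending the base field) $I_3^{\mathbb{Q}}(C_5)=\lrangle{1}$. Equivalently, and more constructively, one may reduce the minor $x_0x_1x_2-x_0-4x_1-x_2+4$ modulo the ideal $\lrangle{2x_0+1,\dots,2x_4+1}$ to the nonzero constant $\tfrac{55}{8}$, which is a unit in $\mathbb{Q}$, exhibiting $1$ as an explicit $\mathbb{Q}[X]$-combination of $3$-minors. I expect the only real work to be the bookkeeping of the minor expansions; the structural heart of the argument—diameter $2$ together with the appearance of the linear minors $2x_i+1$—requires no case analysis and is where all the leverage comes from. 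If one prefers, the same verification can be packaged as a short Gröbner-basis computation over $\mathbb{Q}$, in the style of the code accompanying Proposition~\ref{prop:P5 is forb}.
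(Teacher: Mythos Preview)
Your proposal is correct and follows exactly the paper's approach: since $\diam(C_5)=2$, Lemma~\ref{lemma:distance2inducedmonotone} reduces everything to checking $I_3^{\mathbb{Q}}(C_5)=\lrangle{1}$, and your explicit minors $2x_{i+2}+1$ together with the principal minor on $\{0,1,2\}$ carry out that check cleanly. The paper's own proof states only the one-line reduction and leaves the ideal computation implicit, so your write-up is simply a more detailed version of the same argument.
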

\begin{proof}
    Since $\text{diam}(C_{5})=2$, then the result follows by checking that $I_{3}^{\mathbb{Q}}(C_{5})=\lrangle{1}$.
\end{proof}

\begin{lemma}\cite{H77}
    If $G\in \Lambda_2^{\mathbb{Q}}$, then $G$ is distance-hereditary. 
\end{lemma}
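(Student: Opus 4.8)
The plan is to verify that any $G\in\Lambda_2^{\mathbb{Q}}$ contains none of the induced subgraphs that obstruct distance-heredity. Recall from \cite{H77} that a graph is distance-hereditary if and only if it is $\{\Gc,\Gd,\text{\sf domino},\text{\sf holes}_5\}$-free, where $\text{\sf holes}_5$ denotes the cycles of length at least $5$. Hence it suffices to rule out each of the \textsf{house}, the \textsf{gem}, the \textsf{domino}, and every cycle $C_n$ with $n\geq 5$ as an induced subgraph of $G$.

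First I would dispatch the \textsf{house} and the \textsf{gem} using the integral theory already developed. Since $\Phi^{\mathbb{Z}}(G)\leq\Phi^{\mathbb{Q}}(G)$ we have $\Lambda_2^{\mathbb{Q}}\subseteq\Lambda_2^{\mathbb{Z}}$, and by Theorem~\ref{theo:main simple} every graph in $\Lambda_2^{\mathbb{Z}}$ is $\{\mathcal{F},\text{\sf odd-holes}_7\}$-free. As $\Gc,\Gd\in\mathcal{F}$, it follows at once that $G$ is \textsf{house}-free and \textsf{gem}-free. The remaining obstructions I would funnel through a single configuration, namely $P_5$: by Proposition~\ref{prop:P5 is forb} every graph with an induced $P_5$ has trivial third rational distance ideal, so no member of $\Lambda_2^{\mathbb{Q}}$ contains an induced $P_5$.

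The key elementary observation is then that both the \textsf{domino} and every cycle $C_n$ with $n\geq 6$ already contain an induced $P_5$. Indeed, writing the \textsf{domino} as a ladder with top path $t_1t_2t_3$, bottom path $s_1s_2s_3$ and rungs $t_is_i$, the vertices $s_1,t_1,t_2,t_3,s_3$ induce a $P_5$; and in any chordless cycle $C_n$ with $n\geq 6$ five consecutive vertices induce a $P_5$, because its two end vertices lie at cycle-distance at least $2$. Consequently the $P_5$-freeness of $G$ forces $G$ to be \textsf{domino}-free and $C_n$-free for all $n\geq 6$, while the case $C_5$ is covered directly by Proposition~\ref{prop:C5 is forb} (whose proof rests only on $\diam(C_5)=2$ together with Lemma~\ref{lemma:distance2inducedmonotone}). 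Combining these, $G$ has no induced \textsf{house}, \textsf{gem}, \textsf{domino}, or $C_n$ with $n\geq 5$, so $G$ is distance-hereditary by \cite{H77}. I do not anticipate a genuine obstacle here; the only point demanding care is to state the forbidden-subgraph characterization correctly and to check the two $P_5$-containments, which is precisely what permits us to avoid a separate case analysis for the \textsf{domino} and for the long holes.
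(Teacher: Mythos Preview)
Your proof is correct and follows essentially the same approach as the paper: use $\Lambda_2^{\mathbb{Q}}\subseteq\Lambda_2^{\mathbb{Z}}$ to exclude the \textsf{house} and the \textsf{gem}, invoke the two propositions to exclude $P_5$ and $C_5$, and then observe that $P_5$-freeness kills the \textsf{domino} and all longer holes, yielding the Howorka characterization. The paper's proof is just a terser version of yours; your explicit verification of the induced $P_5$ inside the \textsf{domino} and inside $C_n$ for $n\geq 6$ simply makes transparent what the paper leaves implicit.
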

\begin{proof}
    Since $G$ is in $\Lambda_2^{\mathbb{Z}}$, then, it is {\sf house} and {\sf gem} free. 
    Moreover, since it is $P_5$ and $C_5$ free by Propositions \ref{prop:P5 is forb} and \ref{prop:C5 is forb}, then $G$ is {\sf domino} and {\sf hole} free. 
    Equivalently, $G$ is distance hereditary.
\end{proof}

The previous lemma implies that now we only have to check that the third rational distance ideals of $H_1$, $H_2$, $H_3$, $H_4$ and $K_{2,2,2}$ are trivial to know that they are forbidden for $\Lambda_2^{\mathbb{Q}}$.

\begin{proposition}
    The graphs $H_1$, $H_2$, $H_3$, $H_4$ and $K_{2,2,2}$ in Figure~\ref{fig:forbidden graphs} are all forbidden for $\Lambda_{2}^{\mathbb{Q}}$.
\end{proposition}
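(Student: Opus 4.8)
The plan is to use the definition of forbidden directly: a graph $H$ is forbidden for $\Lambda_2^{\mathbb{Q}}$ precisely when $I_3^{\mathbb{Q}}(H)=\langle 1\rangle$ in $\mathbb{Q}[X]$, and this is exactly what the remark preceding the statement already reduces the problem to. Since $\mathbb{Q}$ is a field, every nonzero rational is a unit of $\mathbb{Q}[X]$, so it suffices, for each of the five graphs, to exhibit a single $3\times 3$ minor of the generalized distance matrix $D_X(H)=\diag(X)+D(H)$ that is a nonzero constant (an integer, independent of $X$). Such a minor lies in $I_3^{\mathbb{Q}}(H)$, and being a unit it forces the ideal to be the whole ring. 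Thus the entire task becomes: produce one constant, nonzero $3\times 3$ minor for each of $H_1,H_2,H_3,H_4,K_{2,2,2}$.

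For the three six-vertex graphs $H_3=P_3^{(1,-1,-1)}$, $H_4=P_3^{(1,-1,1)}$ and $K_{2,2,2}$ this is immediate. Having six vertices, one may select three rows and three columns indexed by \emph{disjoint} vertex sets; the resulting $3\times 3$ submatrix of $D_X(H)$ contains no diagonal position and is therefore a constant integer matrix. For $K_{2,2,2}$, taking one vertex of each part for the rows and the other three for the columns yields $\det\begin{pmatrix}2&1&1\\1&2&1\\1&1&2\end{pmatrix}=4$, and the analogous disjoint choices in $H_3$ and $H_4$ give the nonzero constants $-3$ and $-4$. Any one of these already gives $I_3^{\mathbb{Q}}=\langle 1\rangle$.

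The two five-vertex graphs $H_1=P_4^{(-1,0,0,0)}$ and $H_2=P_4^{(0,-1,0,0)}$ are the point that requires care, and I expect them to be the main obstacle: with only five indices, every $3\times 3$ submatrix must share at least one index between its rows and its columns, so a diagonal variable necessarily appears and no minor is \emph{visibly} constant. The key observation is that the path-plus-blow-up structure forces two vertices to be equidistant from each of two others, which produces a singular constant $2\times 2$ block in $D(H)$. Choosing a $3\times 3$ submatrix whose unique diagonal variable has exactly such a singular $2\times 2$ as its cofactor makes the coefficient of that variable vanish, leaving a constant. For $H_1$, the submatrix on rows $\{a_1,b,d\}$ and columns $\{a_2,c,d\}$ has the cofactor $\det\begin{pmatrix}2&2\\1&1\end{pmatrix}=0$ of the diagonal entry and reduces to the constant $2$; a parallel choice in $H_2$ (exploiting its independent-set twins) yields the constant $6$. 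Hence $I_3^{\mathbb{Q}}(H_1)=I_3^{\mathbb{Q}}(H_2)=\langle 1\rangle$.

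It is worth noting that all the constants produced, namely $2,6,-3,-4,4$, are units in $\mathbb{Q}$ but not in $\mathbb{Z}$; this is exactly the mechanism by which these graphs lie in $\Lambda_2^{\mathbb{Z}}$ yet become forbidden once the coefficients are allowed to be rational, so the computation doubles as a sanity check against the integral characterization. Finally, combined with the preceding lemma that every graph in $\Lambda_2^{\mathbb{Q}}$ is distance-hereditary, Lemma~\ref{lem:dishere} promotes each of these ideal computations to a genuine forbidden-induced-subgraph obstruction: any distance-hereditary $G$ containing one of the $H_i$ as a connected induced subgraph satisfies $I_3^{\mathbb{Q}}(H_i)\subseteq I_3^{\mathbb{Q}}(G)$ and therefore inherits a trivial third ideal.
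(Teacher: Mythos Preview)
Your proof is correct. Both you and the paper reduce the task to verifying $I_3^{\mathbb{Q}}(H)=\langle 1\rangle$ for each of the five graphs, but the executions differ. The paper simply invokes a SageMath Gr\"obner basis computation (Code~\ref{lst:H}) and reports that the basis equals $[1]$. You instead exhibit, for each graph, a single $3\times 3$ minor of $D_X(H)$ that happens to be a nonzero rational constant---and hence a unit of $\mathbb{Q}[X]$---by exploiting either disjoint row/column index sets (for the six-vertex graphs $H_3,H_4,K_{2,2,2}$) or twin vertices whose identical distance profiles force the cofactor of the lone diagonal variable to vanish (for $H_1,H_2$). The explicit constants $2,\pm 6,-3,-4,4$ that you produce check out. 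Your argument has the virtue of being verifiable by hand and of making transparent why these graphs are forbidden over $\mathbb{Q}$ yet allowed over $\mathbb{Z}$: each witness minor is a non-unit integer. The paper's approach, on the other hand, scales uniformly and requires no structural insight into the graphs.
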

\begin{proof}
    A direct computation shows the claim. Please refer to Code~\ref{lst:H}. 
\end{proof}


Now we present the main result of this section.

\begin{theorem}
    Let $G\in \Lambda_{2}^{\mathbb{Q}}$. Then $G$ is one of the following:
    \begin{itemize}
        \item[i)] $K_{n,m}$,
        \item[ii)] A complete tripartite graph $K_{1,n,m}$,
        \item[iii)] $K_{n-(p-1),1,\ldots,1}$ where $p$ is the number of partitions,
        \item[iv)] A connected induced subgraph of any of the following
        \begin{center}
        \begin{tabular}{c@{\extracolsep{2cm}}c}
    
        \begin{tikzpicture}[scale=1.4]
	\tikzstyle{every node}=[minimum width=0pt, inner sep=2pt, circle]
	\draw (-1.25,0) node (v2) [draw] {$K_{n+1}$};
	\draw (0,0) node (v3) [draw, rectangle] {$\overline K_{m+1}$};
	\draw (1,0) node (v4) [draw] { };
	\draw (v2) -- (v3) -- (v4);
	\end{tikzpicture} 
        &
        \begin{tikzpicture}[scale=1.4]
	\tikzstyle{every node}=[minimum width=0pt, inner sep=2pt, circle]
	\draw (-1,0) node (v2) [draw] {$K_{n_1+1}$};
	\draw (0,0) node (v3) [draw] { };
	\draw (1,0) node (v4) [draw] {$K_{n_2+1}$};
	\draw (v2) -- (v3) -- (v4);
	\end{tikzpicture}\\
    $P_3^{(n,-m,0)}$ & $P_3^{(n_1,0,n_2)}$
\end{tabular}
        \end{center}
    
        \begin{center}
        \begin{tabular}{c@{\extracolsep{1.3cm}}c}

        \begin{tikzpicture}[scale=1.4]
	\tikzstyle{every node}=[minimum width=0pt, inner sep=2pt, circle]
	\draw (-2,0) node (v1) [draw] {};
	\draw (-1,0) node (v2) [draw] {$K_{n+1}$};
	\draw (0,0) node (v3) [draw] {};
	\draw (1,0) node (v4) [draw] {};
	\draw (v1) -- (v2) -- (v3) -- (v4);
	\end{tikzpicture} 
        &
        \begin{tikzpicture}[scale=1.4]
	\tikzstyle{every node}=[minimum width=0pt, inner sep=2pt, circle]
	\draw (-1,0) node (v2) [draw] {$K_{n+1}$};
	\draw (0,0) node (v3) [draw] {};
	\draw (1,0) node (v4) [draw] {};
        \draw (2,0) node (v5) [draw] {};
	\draw (v2) -- (v3) -- (v4) -- (v5);
	\end{tikzpicture}\\
        $P_4^{(0,n,0,0)}$ & $P_4^{(n,0,0,0)}$
        \end{tabular}
        \end{center}
        \begin{center}

        \begin{tikzpicture}[scale=1.4]
	\tikzstyle{every node}=[minimum width=0pt, inner sep=2pt, circle]
	\draw (-1,0) node (v2) [draw] {$K_{n+1}$};
	\draw (0,0) node (v3) [draw] { };
	\draw (1,0) node (v4) [draw,rectangle] {$\overline K_{m+1}$};
	\draw (v2) -- (v3) -- (v4);
	\end{tikzpicture}\\
        $P_3^{(n,0,-m)}$.
        \end{center}
        
    \end{itemize}
\end{theorem}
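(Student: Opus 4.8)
The plan is to bootstrap from the integral classification. Since $\Phi^{\mathbb{Z}}(G)\le\Phi^{\mathbb{Q}}(G)$ gives $\Lambda_2^{\mathbb{Q}}\subseteq\Lambda_2^{\mathbb{Z}}$, any $G\in\Lambda_2^{\mathbb{Q}}$ is already one of the six families of Theorem~\ref{theo:main theorem}, and the whole task is to decide which parts of those families survive the stronger requirement of working over $\mathbb{Q}$. That requirement is encoded entirely in the forbidden list: by Propositions~\ref{prop:P5 is forb} and~\ref{prop:C5 is forb} together with the subsequent lemma and proposition, every $G\in\Lambda_2^{\mathbb{Q}}$ is distance-hereditary and $\{P_5,C_5,H_1,H_2,H_3,H_4,K_{2,2,2}\}$-free. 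So the argument reduces to intersecting each $\Lambda_2^{\mathbb{Z}}$-family with this finite forbidden set.

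I would then run the case analysis family by family. Family (i) of Theorem~\ref{theo:main theorem}, namely $C_5$, is discarded immediately by Proposition~\ref{prop:C5 is forb}. For the complete multipartite families (iii) and (iv), forbidding $K_{2,2,2}$ forces all but at most one part of a tripartite graph to be a single vertex, producing $K_{1,n,m}$, and one checks that $K_{1,n,m}$ and $K_{n-(p-1),1,\dots,1}$ are automatically $\{H_3,H_4\}$-free and (being complete multipartite, hence closed under induced subgraphs) $P_5$-free; this yields cases (ii) and (iii) of the statement. The bipartite family (ii) must in addition be $\{P_5,H_1,H_2\}$-free: a connected bipartite graph of diameter at least $3$ contains an induced $P_4$ realized by a shortest path between two vertices at distance $3$, and a routine branching argument shows that any further extension of this $P_4$ forces an induced $P_5$, $H_1$, or $H_2$, so the only survivors are the diameter-$2$ complete bipartite graphs $K_{n,m}$ of case (i), the residual graph $P_4$ being absorbed into case (iv).

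The core of the proof is the treatment of the blowup families (v) and (vi) of Theorem~\ref{theo:main theorem}, which I would handle by reading each forbidden graph as a constraint on the admissible blowup vector. Forbidding $P_5$ prevents all five groups of $P_5^{(-m_1,n_1,-m_2,n_2,-m_3)}$ from being simultaneously realized in an induced copy, so only induced subgraphs supported on at most four consecutive groups remain, collapsing (v) to $P_4$- and $P_3$-blowups; forbidding $H_1=P_4^{(-1,0,0,0)}$ and $H_2=P_4^{(0,-1,0,0)}$ prevents an end-group or a penultimate group from being an independent set of size $\ge 2$; and forbidding $H_3=P_3^{(1,-1,-1)}$ and $H_4=P_3^{(1,-1,1)}$ eliminates the remaining ``clique--independent--independent'' patterns. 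Matching the survivors against the diagrams of the statement produces exactly $P_3^{(n,-m,0)}$, $P_3^{(n_1,0,n_2)}$, $P_3^{(n,0,-m)}$, $P_4^{(0,n,0,0)}$ and $P_4^{(n,0,0,0)}$, giving case (iv).

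I expect the main obstacle to be the bookkeeping in this last step: one must confirm, for every blowup vector outside the target list, that it genuinely contains one of $\{P_5,H_1,H_2,H_3,H_4\}$ as an induced subgraph (so that it is correctly excluded), and conversely that each listed pattern avoids all of them, so that the correspondence is tight. A separate point, needed only if one wants the full equivalence rather than the stated necessity, is the converse inclusion: each listed graph must be shown to lie in $\Lambda_2^{\mathbb{Q}}$. As all listed graphs are distance-hereditary, Lemma~\ref{lem:dishere}, together with the observation that blowing up a vertex only duplicates rows and columns and hence cannot raise the rank, reduces this to exhibiting, for the base patterns, an evaluation $X=\mathbf{d}$ making $D_X(G)|_{X=\mathbf{d}}$ have rank at most $2$ over $\mathbb{Q}$; then all $3$-minors share the common zero $\mathbf{d}$ and $I_3^{\mathbb{Q}}(G)$ is proper. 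Here the integral ``evenness'' argument is unavailable, since an ideal such as $\langle 2\rangle$ is already trivial over $\mathbb{Q}$, so these rank-$\le 2$ evaluations must be found directly, exactly as in the appendix computations.
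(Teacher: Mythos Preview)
Your approach is essentially the paper's: both obtain the list by intersecting the $\Lambda_2^{\mathbb{Z}}$ classification of Theorem~\ref{theo:main theorem} with the forbidden set $\{P_5,C_5,H_1,H_2,H_3,H_4,K_{2,2,2}\}$, and both handle the converse inclusion by exhibiting rational evaluations at which every $3$-minor vanishes. The paper is terser on the necessity direction (it simply asserts that the list is what survives the restriction, without spelling out your family-by-family case analysis) and more computational on the sufficiency direction (explicit Gr\"obner bases in SageMath, with an ``no new induced structure for larger parameters'' argument, rather than your cleaner rank-$\le 2$/blowup observation), but the substance is the same.
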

\begin{proof}
    Notice that the cases presented are the six cases in the characterization of $\Lambda_2^{\mathbb{Z}}$ restricted by the forbidden induced subgraphs in Figure~\ref{fig:reals}. 
    Now, it remains to show that the third rational distance ideals of these graphs are nontrivial.
    For this, first note that if the third distance ideal of a graph is trivial over $\mathbb{Q}[X]$, then any evaluation of $X=(x_1,\ldots , x_n)$ also yields the trivial ideal over $\mathbb{Q}[X]$. 
    Thus, we will approach the problem by showing that for each particular case, the third rational distance ideal over a specific evaluation of $X$ is nontrivial.

    For $G=K_{1,n,m}$, let $x,\ y$ and $z$ be three indeterminates and let
    \[x_i=\begin{cases}
        x & \text{ if }i=1,\\
        y & \text{ if }i\in\{2,\ldots ,n+1 \},\\
        z & \text{otherwise.}
    \end{cases}\] 
    We will show that the third rational distance ideal of $G$ is a nontrivial ideal over $\mathbb{Q}[x,y,z]$. Specifically, we claim that this ideal is given by $\lrangle{x-\frac{2}{3},y-2,z-2}$ whenever $n,m\geq 2$.
    Explicit computations for $n\leq 3$ and $m \leq 6$ are given in Code \ref{lst:caseii}.
    
    Furthermore, a basis for the ideal is found by going through every possible 3-minor (polynomial) considering all possible induced graphs $H$, of order $3\leq |H|\leq 6$.
    Thus, since no new structure appears for such induced subgraphs if $n> 3$ or $m> 6$, then the third rational distance ideal of $G$ is also $\lrangle{x-\frac{2}{3},y-2,z-2}$ in this case. 
    Moreover, $K_{n,m}$ is an induced subgraph of $K_{n,m}$, then its third distance ideal is also nontrivial.
    
    Now, for $G=P_3^{(-n,m,0)}$, let 
    \[x_i=\begin{cases}
        x & \text{ if }v_i\text{ is in the clique }K_{n+1},\\
        y & \text{ if }v_i\text{ is in the independent set }\overline K_{m+1},\\
        z & \text{otherwise.}
    \end{cases}\]  
    
    Then, we claim that the third rational distance ideal is given by $\lrangle{x-1,y-2,z-5}$ whenever $n,m\geq 1$. 
    An explicit computation for $1\leq n,m \leq 5$ is given in Code \ref{lst:caseiv1}.
    As in the previous case, since no new structure appears for such induced subgraphs if $n> 5$ or $m> 5$, then this result holds for any $n,m\geq 1$.  
    Also, note that case iii) is an induced subgraph of $P_3^{(-n,m,0)}$ (erasing the ``rightmost" vertex), then its third distance ideal is nontrivial.

    We can proceed similarly with the other graphs in iv). 
    The third rational distance ideal of $P_3^{(-n_1,0,=n_2)}$ is $\lrangle{x-1,y-\frac{2}{3},z-1}$ if $n_1,n_2\geq 1$, with
    \[x_i=\begin{cases}
        x & \text{ if }v_i\text{ is in the leftmost clique }K_{n_1+1},\\
        z & \text{ if }v_i\text{ is in the rightmost clique }K_{n_2+1},\\
        y & \text{otherwise.}
    \end{cases}\]  
    This can be verified with Code~\ref{lst:caseiv2}. 
    For $P_3^{(-n,0,m)}$, $n\geq 2$ and $m\geq 1$, its third rational distance ideal is $\lrangle{x-1,y-\frac{1}{2},z-2}$ with
    \[x_i=\begin{cases}
        x & \text{ if }v_i\text{ is in the clique }K_{n+1},\\
        z & \text{ if }v_i\text{ is in the independent set }\overline K_{m+1},\\
        y & \text{otherwise.}
    \end{cases}\]  
    Code~\ref{lst:caseiv3} can be used to check it.
    
    On the other hand, let
    \[x_i=\begin{cases}
        w & \text{ if }v_i\text{ is the leftmost vertex},\\
        x & \text{ if }v_i\text{ is in the clique }K_{n+1},\\
        y & \text{ if }v_i\text{ is in the ``first'' vertex to the right of }K_{n+1},\\
        z & \text{ if }v_i\text{ is the rightmost vertex},
    \end{cases}\]  
    then the third rational distance ideal of $P_4^{(0,-n,0,0)}$ is $\lrangle{w,x-1,y,z-3}$ for $n\geq 1$, see Code~\ref{lst:caseiv4}.
    Lastly, if
    \[x_i=\begin{cases}
        w & \text{ if }v_i\text{ is in the clique }K_{n+1},\\
        x & \text{ if }v_i\text{ is in the ``first'' vertex to the right of }K_{n+1},\\
        y & \text{ if }v_i\text{ is in the ``second'' vertex to the right of }K_{n+1},\\
        z & \text{ if }v_i\text{ is the rightmost vertex,}
    \end{cases}\]  
    then third rational distance ideal of $P_4^{(-n,0,0,0)}$ is $\lrangle{w-1,x-\frac{4}{5},y+1,z-4}$ for $n\geq 1$, see Code~\ref{lst:caseiv5}.

\end{proof}

The computation of the third rational distance ideal for $X=(x_1,\ldots ,x_n)$, in general, is more difficult.

\begin{proposition}
    The third distance ideal of $K_{n,m}$ over the rational numbers is not trivial.
\end{proposition}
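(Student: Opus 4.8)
The plan is to show that the third distance ideal $I_3^{\mathbb{Q}}(K_{n,m})$ is nontrivial by exhibiting an explicit evaluation of the variables at which the ideal remains a proper ideal of the polynomial ring. Since $K_{n,m}$ is bipartite with diameter $2$ (when $n,m\geq 2$), the distance matrix is particularly simple: $d(u,v)=1$ when $u,v$ lie in opposite parts and $d(u,v)=2$ when they lie in the same part. I will assign a single indeterminate $x$ to every vertex in the part of size $n$ and a single indeterminate $y$ to every vertex in the part of size $m$, so that the generalized distance matrix becomes a highly structured block matrix
\[
D_X(K_{n,m})=\begin{bmatrix} 2{\sf J}_{n,n}-(2-x){\sf I}_n & {\sf J}_{n,m}\\ {\sf J}_{m,n} & 2{\sf J}_{m,m}-(2-y){\sf I}_m \end{bmatrix}.
\]

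First I would reduce the problem to a bounded computation. By the same row/column reduction used in Proposition~\ref{prop:complete tripartites SNF} and the surrounding results, subtracting the first row (resp.\ column) of each block from the remaining rows (resp.\ columns) of that block collapses the matrix down to a constant-size core whose entries depend only on $x$, $y$, and the scalars $2,1$; the ``blow-up'' of each part into many vertices merely introduces diagonal entries $\pm(2-x)$ or $\pm(2-y)$ and otherwise adds structurally identical rows. The key point, parallel to the proof that complete tripartite graphs lie in $\Lambda_2^{\mathbb Z}$, is that all $3$-minors of $D_X(K_{n,m})$ are generated by those arising from at most three representative vertices chosen from the two parts, so it suffices to analyze the $3$-minors coming from the induced subgraphs on $\leq 6$ vertices (as in the rational cases already handled earlier in this section).

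Then I would compute the ideal generated by these finitely many $3$-minors over $\mathbb{Q}[x,y]$ and identify a proper ideal to which they all belong. The natural candidate evaluation is $x=y=2$, which recovers the genuine distance matrix $D(K_{n,m})$; here one checks directly that $\Delta_3(D(K_{n,m}))\neq 1$, i.e.\ the greatest common divisor of the $3$-minors is a nonunit, so by Lemma~\ref{lem:deltaideal} the ideal $I_3^{\mathbb{Q}}(K_{n,m})$ cannot be all of $\mathbb{Q}[X]$. Equivalently, I expect the ideal to be of the form $\lrangle{x-2,\,y-2}$ (or a comparable proper ideal), mirroring the explicit generators found for the other families in the preceding proof; this would be verified by the accompanying symbolic computation for small $n,m$ together with the stabilization argument that no new $3$-minor structure appears for larger parts.

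\textbf{The main obstacle} is justifying the stabilization claim rigorously: one must argue that enlarging either part beyond the small verified cases produces no genuinely new $3\times 3$ submatrix pattern, so that the ideal computed for small $n,m$ already generates $I_3^{\mathbb{Q}}(K_{n,m})$ in general. This is the same subtlety that the authors handle for the earlier families by observing that every $3$-minor is supported on at most three distinct ``vertex types,'' of which there are only two here (one per part); hence any $3\times 3$ submatrix is, after the permutation of equivalent rows and columns, one of a bounded list of patterns. Once that finiteness is established, the nontriviality at $x=y=2$ closes the argument via Proposition~\ref{prop:eval1} and Lemma~\ref{lem:deltaideal}.
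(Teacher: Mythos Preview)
Your overall strategy---specialize the variables to one per partition class, then exhibit a point at which the ideal is proper---is sound and simpler than the paper's approach, which keeps all $n+m$ variables distinct, row-reduces the full generalized distance matrix, and enumerates every $3$-minor case to compute $I_3^{\mathbb{Q}}(K_{n,m})=\lrangle{x_i-2,\,y_j-2}$ exactly. Your route only needs containment in some proper ideal, not the precise generators, so in principle it buys a shorter argument.

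However, the verification step you describe does not work as written. First, setting $x=y=2$ does \emph{not} recover the distance matrix $D(K_{n,m})$; the distance matrix is obtained at $x=y=0$. Second, and more seriously, the appeal to $\Delta_3(D(K_{n,m}))\neq 1$ via Lemma~\ref{lem:deltaideal} is an integer argument: over $\mathbb{Z}$ one has $\Delta_3(D(K_{n,m}))=2$ (Theorem~\ref{theo:3thideals}), but $2$ is a unit in $\mathbb{Q}$, so the evaluated ideal over $\mathbb{Q}$ is all of $\mathbb{Q}$ and you conclude nothing about $I_3^{\mathbb{Q}}$. Over a field, the only way an evaluation can witness properness is if \emph{every} $3$-minor vanishes there, i.e.\ the evaluated matrix has rank at most $2$.

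Ironically, your chosen point $x=y=2$ does exactly this---just not for the reason you state. At $x=y=2$ the diagonal blocks become $2{\sf J}$ and the off-diagonal blocks are ${\sf J}$, so every row in a given block is identical and the matrix has rank $2$; hence all $3$-minors vanish and the ideal lies in the maximal ideal at $(2,\dots,2)$, which is proper. Once you make this observation, no stabilization argument, no small-case computation, and no appeal to Lemma~\ref{lem:deltaideal} is needed: the rank bound holds for every $n,m\geq 1$ directly. Rewriting your middle paragraph around this rank-$2$ observation would close the gap.
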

\begin{proof}
    Assume that $n,m\geq 3$. We will compute explicitly a set of generators for $I_{3}^{\mathbb{Q}}(K_{n,m})$. For an arbitrary squared matrix, ${\sf A}$, we define ${\sf A}(\omega)$ to be the matrix obtained from ${\sf A}$ by replacing its diagonal with some vector of variables $\omega$. With this notation, the generalized distance matrix is  
    $${\sf D}_{X}(K_{m,n})=
    \begin{bmatrix}
    2{\sf J}({\sf x})_{n,n}&{\sf J}_{n,m}\\
    {\sf J}_{m,n}&2{\sf J}({\sf y})_{m,m}
    \end{bmatrix},$$ where ${\sf x}=(x_{1},\dots, x_{n})$ and ${\sf y}=(y_{1},\dots, y_{m})$. We may perform elementary operations to ${\sf D}_{X}(K_{m,n})$. In particular, subtracting the first row to the second, third and so on up to the $n-$th row and subtracting the last row to all the $n+1,\dots, n+m-1$ rows yields the equivalent matrix
    \begin{equation*}
    \begin{bmatrix}
        x_{1}&2&2&\cdots&2&1&1&\cdots&1\\
        2-x_{1}&x_{2}-2&0&\cdots&0&0&0&\cdots&0\\
        2-x_{1}&0&x_{3}-2&\cdots&0&0&0&\cdots&0\\
        \vdots&\vdots&\vdots&\ddots&\vdots&\vdots&\vdots&\ddots&\vdots\\
        2-x_{1}&0&0&\cdots&x_{n}-2&0&0&\cdots&0\\
        0&0&0&\cdots&0&y_{1}-2&0&\cdots&2-y_{m}\\
        0&0&0&\cdots&0&0&y_{2}-2&\cdots&2-y_{m}\\
        \vdots&\vdots&\vdots&\ddots&\vdots&\vdots&\vdots&\ddots&\vdots\\
        1&1&1&\cdots&1&2&2&\cdots&y_{m}\\
    \end{bmatrix}.
    \end{equation*} 
    We have just applied elementary operations over the ring $\mathbb{Z}$ of integers. After applying operations over $\mathbb{Q}$, it is easy to check that the above matrix is equivalent to 
    \begin{equation*}
    L:=\begin{bmatrix}
        \frac{1}{3}(2-x_{1})&0&0&\cdots&0&1&1&\cdots&\frac{1}{3}(2y_{m}-1)\\
        2-x_{1}&x_{2}-2&0&\cdots&0&0&0&\cdots&0\\
        2-x_{1}&0&x_{3}-2&\cdots&0&0&0&\cdots&0\\
        \vdots&\vdots&\vdots&\ddots&\vdots&\vdots&\vdots&\ddots&\vdots\\
        2-x_{1}&0&0&\cdots&x_{n}-2&0&0&\cdots&0\\
        0&0&0&\cdots&0&y_{1}-2&0&\cdots&2-y_{m}\\
        0&0&0&\cdots&0&0&y_{2}-2&\cdots&2-y_{m}\\
        \vdots&\vdots&\vdots&\ddots&\vdots&\vdots&\vdots&\ddots&\vdots\\
        \frac{1}{3}(x_{1}-1)&1&1&\cdots&1&0&0&\cdots&\frac{1}{3}(1-y_{m})\\
    \end{bmatrix}.
    \end{equation*}
    Let $I,J\subset [n+m]$ be two sets of 3 indices. 
    Let $L[I;J]$ be the submatrix of L obtained from $I$ and $J$. 
    Suppose first $\max{I},\max{J}\leq n$. 
    If $1\not\in I\cup J$, then $\det{L[I;J]}\neq 0$ if and only if $I=J$ and hence $\det{L[I;J]}=(x_{i}-2)(x_{j}-2)(x_{k}-2)$. 
    If $1\in I$ and $1\not\in J$, then $\det{L[I;J]}=0$ thus we must have $1\in I\cap J$. 
    Moreover, if $I\neq J$, then $L[I;J]$ contains an all-zero column. 
    The case $I=J=\lbrace 1,i,j\rbrace$ yields $\det{L[I;J]}=\frac{1}{3}(2-x_{1})(x_{i}-2)(x_{j}-2)$.
    Now assume $\max{I}\leq n< \max{J}$. 
    In this case, either one, two or all three indices of $J$ may be greater than $n$. 
    If two indices of $J$ are greater than $n$ and $1\in I\cap J$, then $L[I;J]$ contains a $2\times 2$ all-zero block implying that its determinant is $0$. 
    If $1\not\in J$, then $L[I;J]$ contains at least one all-zero row. 
    Suppose just one index of $J$ is greater than $n$. If $1\in I\cap J$, then $\det{L[I;J]}$ is either $-(2-x_{1})(x_{i}-2)$ or $-\frac{1}{3}(2y_{m}-1)(2-x_{1})(x_{i}-2)$ whether $\max {J}<n+m$ or $\max{J}=n+m$. 
    If $1\not\in J$, then all $L[I;J]$ contains an all-zero row and if $1\in J$ and $1\not\in I$, $L[I;J]$ contains an all-zero column.

    The matrix $L$ exhibits a symmetry among the four $2\times 2$ blocks of maximum size. Thus, we may conclude that the only nonzero $3-$minors of $L$ are $(y_{i}-2)(y_{j}-2), \frac{1}{3}(1-y_{m})(y_{i}-2)(y_{j}-2), -(2-y_{m})(y_{i}-2)$ and $-\frac{1}{3}(2-x_{1})(2-y_{m})(y_{i}-2)$.

    Finally, we may conclude that $I_{3}^{\mathbb{Q}}(K_{n,m})$ is generated by the set $$\lbrace x_{i}-2,y_{j}-2\mid i=1,\dots,n,~j=1,\dots,m\rbrace.$$ That is, $$I_{3}^{\mathbb{Q}}(K_{n,m})=\lrangle{x_{i}-2,y_{j}-2\mid i=1,\dots,n,~j=1,\dots,m},$$ which is not trivial.\\
    Now, if $n=2$ and $m\geq 3$, then $I_{3}^{\mathbb{Q}}(K_{n,m})=\lrangle{x_1x_2-\frac{1}{2}x_0-\frac{1}{2}x_1-2,y_{j}-2\mid j=1,\dots,m}$.
    On the other hand,
    $I_{3}^{\mathbb{Q}}(K_{2,2})=\langle x_1x_2 - \frac{1}{2}x_1 - \frac{1}{2}x_2 - 2, x_1x_3 - 2x_1 - 2x_3 + 4, x_2x_3 - 2x_2 - 2x_3 + 4, x_1x_4 - 2x_1 - 2x_4 + 4, x_2x_4 - 2x_2 - 2x_4 + 4, x_3x_4 - \frac{1}{2}x_3 - \frac{1}{2}x_4 - 2\rangle$, and $I_{3}^{\mathbb{Q}}(K_{1,m})$, for $m\geq 2$, is not trivial by \cite[Theorem 17]{at}.
\end{proof}



\section{Graphs with at most two trivial distance univariate ideals}\label{sec:6}

The univariate case is given by setting all indeterminates equal, say $x_i=t$ for any integer $i$. We denote the corresponding distance ideals by $I_k^{\mathfrak{R}} (G,t)=\langle \minors_k (t{\sf I}+D(G) ) \rangle$. 
In particular, note that $I_{|G|}^{\mathfrak{R}}(G,t)$ is generated by the characteristic polynomial of $-D(G)$. 
Let $\Phi^{\mathfrak{R}} (G,t)$ be the maximum integer $k$ such that $I_k^{\mathfrak{R}} (G,t)$ is trivial and let $\Lambda_2^{t,\mathfrak{R}}$ be the set of graphs with at most two trivial univariate distance ideals over $\mathfrak{R}$.

Similarly to the previous section, 
\[
\Phi^{\mathfrak{R}} (G) \leq \Phi^{\mathfrak{R}} (G,t).
\]

Thus,
\[
\Lambda_2^{t,\mathfrak{R}}\subset \Lambda_2^{\mathfrak{R}}.
\]

This scenario is much more restrictive since the {\sf paw} is a forbidden induced subgraph for the case $\mathfrak{R}=\mathbb{Z}$, as well as, $K_5-e=K_{2,1,1,1}$. 
Moreover, $P_4$, the {\sf diamond} graph and the {\sf paw} graph are forbidden for the case $\mathfrak{R}=\mathbb{Q}$.

Therefore, we have the following results.
\begin{theorem}
    Let $G$ be a connected graph. Then $G\in \Lambda_2^{t,\mathbb{Z}}$ if and only if is one the following
    \begin{itemize}
        \item[i)] $C_5$;
        \item[ii)] A connected bipartite graph, or 
        \item[iii)] $K_{n,m,r}$.
    \end{itemize}
\end{theorem}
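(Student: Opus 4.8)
The plan is to exploit the inclusion $\Lambda_2^{t,\mathbb{Z}}\subseteq\Lambda_2^{\mathbb{Z}}$ already recorded above, so that any $G\in\Lambda_2^{t,\mathbb{Z}}$ is one of the graphs of Theorem~\ref{theo:main theorem}, and then to prune that list using two extra induced subgraphs that become forbidden only in the univariate setting. First I would show that the ${\sf paw}$ and $K_{2,1,1,1}=K_5-e$ are forbidden for $\Lambda_2^{t,\mathbb{Z}}$. A direct computation of the $3$-minors of $t{\sf I}+D(\cdot)$ gives $I_3^{\mathbb{Z}}({\sf paw},t)=\langle 1\rangle$ and $I_3^{\mathbb{Z}}(K_{2,1,1,1},t)=\langle 1\rangle$: for the ${\sf paw}$ two of the $3$-minors differ by $t$ while another equals $2t^2-3t+1$, and $2t^2-3t+1-t(2t-3)=1$; for $K_{2,1,1,1}$ one finds the minors $t(t-2)$ and $(t-1)^2$ with $(t-1)^2-t(t-2)=1$. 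Since both graphs have diameter $2$, specializing $X\mapsto t\mathbf{1}$ in Lemma~\ref{lemma:distance2inducedmonotone} yields $I_3^{\mathbb{Z}}(H,t)\subseteq I_3^{\mathbb{Z}}(G,t)$ for every induced subgraph $H$ of diameter $2$; hence any graph with an induced ${\sf paw}$ or $K_{2,1,1,1}$ has trivial third univariate ideal and falls outside $\Lambda_2^{t,\mathbb{Z}}$. Thus every member of $\Lambda_2^{t,\mathbb{Z}}$ is connected, ${\sf paw}$-free and $K_{2,1,1,1}$-free.

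For the necessity I would avoid a case-by-case walk through families (v)--(vi) of Theorem~\ref{theo:main theorem} and instead invoke Olariu's dichotomy, Lemma~\ref{lemma:pawfree}: a connected ${\sf paw}$-free graph is either triangle-free or complete multipartite. If $G$ is triangle-free, then being $\{\mathcal{F},\text{\sf odd-holes}_7\}$-free forces, via Lemma~\ref{lem:householes6free} (an induced $C_5$ makes $G=C_5$) together with the absence of longer odd holes, that $G$ is bipartite or $G=C_5$. If $G$ is complete multipartite, say $G=K_{n_1,\dots,n_p}$, then $K_{2,1,1,1}$-freeness forbids having four parts one of which has size at least two, so either $p\le 3$ (a complete bipartite or complete tripartite graph) or all parts are singletons (a complete graph $K_p$). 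This recovers the claimed list, \emph{together with} the complete graphs.

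For sufficiency I would, for each surviving type, produce an integer specialization of $t$ at which $\Delta_3$ is nontrivial and then apply Lemma~\ref{lem:deltaideal}. For bipartite $G$ the evaluation $t=0$ works: Theorem~\ref{theo:3thideals} and the remarks after it give $\Delta_3(D(G))\in\{2,4\}$, so $I_3^{\mathbb{Z}}(G,t)|_{t=0}\neq\langle 1\rangle$ and hence $I_3^{\mathbb{Z}}(G,t)$ is nontrivial. For a complete tripartite $K_{n,m,r}$ the evaluation $t=2$ is the univariate instance of Proposition~\ref{prop:complete tripartites SNF}, whose $\SNF$ is $\diag(1,1,4,0,\dots,0)$, giving $\Delta_3=4$. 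For a complete graph one does not even need $I_3$: the $2$-minors of $t{\sf I}+D(K_p)$ are $0,\pm(t-1),\pm(t^2-1)$, so $I_2^{\mathbb{Z}}(K_p,t)=\langle t-1\rangle\neq\langle 1\rangle$ and already $\Phi^{\mathbb{Z}}(K_p,t)=1\le 2$.

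The delicate point, and the main obstacle, is $C_5$: here $t=0$ is useless because $D(C_5)$ already has coprime $3$-minors (for instance $3$ and $4$), so $\Delta_3(D(C_5))=1$ and the naive evaluation produces a trivial ideal. The fix is a better specialization: evaluating at $t=5$ and reducing modulo $11$, the circulant $5{\sf I}+D(C_5)$ has rank $2$ over $\mathbb{F}_{11}$ (its eigenvalues are $0,7,0,0,7$), whence every $3$-minor is divisible by $11$ and $I_3^{\mathbb{Z}}(C_5,t)$ is nontrivial. Assembling these computations shows $\Lambda_2^{t,\mathbb{Z}}$ consists of $C_5$, the connected bipartite graphs, the complete tripartite graphs, and the complete graphs; the last family is the univariate shadow of the $K_{n-p+1,1,\dots,1}$ family appearing in the multivariate and rational characterizations, and reconciling it with item (iii) of the stated list is the one place where genuine care is required.
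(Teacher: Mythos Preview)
Your argument is correct and follows the paper's overall plan: restrict the description of $\Lambda_2^{\mathbb{Z}}$ in Theorem~\ref{theo:main theorem} by the two additional univariate obstructions ${\sf paw}$ and $K_5-e=K_{2,1,1,1}$, and then verify that each surviving family has nontrivial third univariate ideal. The paper does exactly this, computing $I_3^{\mathbb{Z}}(C_5,t)=\langle t+6,11\rangle$ directly (your evaluation at $t=5$, where $t+6\equiv 0\pmod{11}$, is the same fact seen through Lemma~\ref{lem:deltaideal}) and citing Theorem~\ref{theo:3thideals} and Proposition~\ref{prop:complete tripartites SNF} for the bipartite and tripartite cases.

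Where you genuinely diverge is in the forward direction: instead of walking through the six families of Theorem~\ref{theo:main theorem} and checking which survive ${\sf paw}$- and $K_{2,1,1,1}$-freeness, you bypass this entirely via Olariu's dichotomy (Lemma~\ref{lemma:pawfree}). This is cleaner, since a connected ${\sf paw}$-free graph is either triangle-free (hence bipartite or $C_5$ once $\{\mathcal{F},\textsf{odd-holes}_7\}$-freeness is in play) or complete multipartite (where $K_{2,1,1,1}$-freeness forces at most three parts or all parts singletons). You also correctly flag a point the paper glosses over: the complete graphs $K_p$ with $p\geq 4$ survive this pruning and genuinely lie in $\Lambda_2^{t,\mathbb{Z}}$ (indeed $I_2^{\mathbb{Z}}(K_p,t)=\langle t-1\rangle$), yet do not appear explicitly among items (i)--(iii) of the stated theorem. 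The paper's list absorbs them only if one reads $K_{n,m,r}$ loosely or recalls that $K_p=K_{p-(p-1),1,\dots,1}$ from case (iv) of Theorem~\ref{theo:main theorem}; your observation that this reconciliation ``is the one place where genuine care is required'' is well taken.
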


\begin{proof}
These characterization results from restricting the one given for $\Lambda_2^{\mathbb{Z}}$ further by the paw graph and $K_5-e$. Thus, if $G\in \Lambda_2^{t,\mathbb{Z}}$, then $G$ falls into one of these 3 cases. Conversely, $I_3^{t,\mathbb{Z}} (C_5)=\lrangle{t + 6, 11}$. The third univariate distance ideal of a connected bipartite graph is nontrivial also by Theorem~\ref{theo:3thideals}. Moreover, $I_3^{t,\mathbb{Z}} (K_3)=\lrangle{t^3 - 3t + 2}$
and $I_3^{t,\mathbb{Z}} (K_{n,m,r})$ is nontrivial for $n,m,r\geq 1$ by Proposition~\ref{prop:complete tripartites SNF}.

\end{proof}

\begin{theorem}
    Let $G\in \Lambda_2^{t,\mathbb{Q}}$, then $G$ is a complete graph or a complete bipartite graph.
\end{theorem}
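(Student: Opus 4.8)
The plan is to leverage the two facts assembled just before the statement: that $\Lambda_2^{t,\mathbb{Q}}\subseteq\Lambda_2^{\mathbb{Q}}$, and that $P_4$, the \textsf{paw}, and the \textsf{diamond} are forbidden for $\Lambda_2^{t,\mathbb{Q}}$. First I would record that every $G\in\Lambda_2^{t,\mathbb{Q}}$ lies in $\Lambda_2^{\mathbb{Q}}$ and is therefore distance-hereditary, by the lemma established earlier in this section. This is the key structural input, since distance-hereditary graphs are precisely the class on which the generalized distance ideals behave monotonically under connected induced subgraphs (Lemma~\ref{lem:dishere}).

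Next I would turn the three small forbidden graphs into genuine forbidden induced subgraphs. The point to verify is that $I_3^{t,\mathbb{Q}}(P_4)=I_3^{t,\mathbb{Q}}(\textsf{paw})=I_3^{t,\mathbb{Q}}(\textsf{diamond})=\lrangle{1}$; each is a short determinantal computation on a $4\times4$ matrix $t{\sf I}+D(\cdot)$, and in each case two of the $3$-minors are coprime polynomials in $\mathbb{Q}[t]$, so they generate the unit ideal of the PID $\mathbb{Q}[t]$. Suppose now, for contradiction, that $G$ contains one of $P_4$, \textsf{paw}, \textsf{diamond} as an induced subgraph $H$. Since $H$ is connected and $G$ is distance-hereditary, Lemma~\ref{lem:dishere} gives $I_3^{\mathbb{Q}}(H)\subseteq I_3^{\mathbb{Q}}(G)$ over $\mathbb{Q}[X_G]$; applying the evaluation homomorphism $x_v\mapsto t$ (which sends $D_X(\cdot)$ to $t{\sf I}+D(\cdot)$ and carries minors to minors) transfers this to $I_3^{t,\mathbb{Q}}(H)\subseteq I_3^{t,\mathbb{Q}}(G)$. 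As the left-hand ideal is trivial, $I_3^{t,\mathbb{Q}}(G)=\lrangle{1}$, contradicting $G\in\Lambda_2^{t,\mathbb{Q}}$. Hence $G$ is $\{P_4,\textsf{paw},\textsf{diamond}\}$-free.

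Finally I would invoke the classification of $\{P_4,\textsf{paw},\textsf{diamond}\}$-free connected graphs in Theorem~\ref{teo:classification}: such a graph is an induced subgraph of some $K_{m,n}$ or of some $K_n$. A connected induced subgraph of $K_n$ is a complete graph, and a connected induced subgraph of $K_{m,n}$ is a complete bipartite graph, so $G$ is complete or complete bipartite, as claimed.

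The only genuinely delicate step is the transfer of monotonicity to the univariate setting in the second paragraph: one must be careful that the specialization $x_v\mapsto t$ is applied to the ambient ring $\mathbb{Q}[X_G]$ (so that the inclusion of ideals is preserved) and that it indeed realizes the univariate ideals $I_3^{t,\mathbb{Q}}$ as the images of the generalized ones. Once this bookkeeping is in place, the remaining $4\times 4$ determinant computations are routine and the structural conclusion is immediate.
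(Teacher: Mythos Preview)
Your proposal is correct and follows exactly the route the paper sets up: the paper states (without explicit proof) that $P_4$, the \textsf{paw}, and the \textsf{diamond} are forbidden in the univariate rational setting and then immediately records $\Lambda_2^{t,\mathbb{Q}}=\Lambda_1^{\mathbb{Z}}$, which is precisely the conclusion of Theorem~\ref{teo:classification}. Your contribution is the careful justification---via the distance-hereditary lemma from Section~\ref{sec:5} and the specialization $x_v\mapsto t$---that the diameter-$3$ graph $P_4$ is a genuine forbidden \emph{induced} subgraph and not merely a graph with trivial third univariate ideal; the paper leaves this step implicit.
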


Note that, curiously, we have $\Lambda_2^{t,\mathbb{Q}}=\Lambda_1^{\mathbb{Z}}$.

\section{Conclusion}

Despite that distance ideals are not monotone under taking induced subgraphs, the characterizations of connected graphs with one trivial distance ideal over $\mathbb{Z}[X]$ and over $\mathbb{Q}[X]$ were obtained in \cite{at} in terms of induced subgraphs.
Later in \cite{alfaro2}, it was proven that the family $\Lambda_2^{\mathbb{Z}}$ of connected graphs with at most two trivial distance ideals over $\mathbb{Z}[X]$ were $\{{\cal F},\textsf{odd-holes}_{7}\}$-free. 
Therefore, since $\{{\cal F},\textsf{odd-holes}_{7}\}$ is an infinite set of minimal forbidden graphs, trying to obtain a characterization of $\Lambda_2^{\mathbb{Z}}$ was considered a difficult problem \cite{alfaro2}. 
Finally, in this paper, we give a characterization of $\Lambda_2^{\mathbb{Z}}$.
Also, we give the characterizations of $\Lambda_2^{\mathbb{Q}}$, $\Lambda_2^{t,\mathbb{Z}}$ and $\Lambda_2^{t,\mathbb{Q}}$.

In the scaffolding of the proof, we found that the determinant of the distance matrix of a connected bipartite graph is even, this suggests that it could be possible to extend, to connected bipartite graphs, the Graham-Pollak-Lovász celebrated formula $\det(D(T_{n+1}))=(-1)^nn2^{n-1}$, and the Hou-Woo result stating that $\SNF(D(T_{n+1}))=\sf{I}_2\oplus 2\sf{I}_{n-2}\oplus (2n)$, for any tree $T_{n+1}$ with $n+1$ vertices.
Also, it would be interesting to see until which $n$ the characterizations of $\Lambda_n^{\mathbb{Z}}$ will not be possible to obtain in terms of induced subgraphs and shed some light on the reasons why this happens.

\section*{Acknowledgement}
The authors were supported by SNI and CONACyT.
The research of Ralihe R. Villagran was supported, in part, by the National Science Foundation through the DMS Award $\#$1808376 which is gratefully acknowledged.


\begin{thebibliography}{99}

\bibitem{distinguish}A. Abiad, C. A. Alfaro, R. R. Villagr\'{a}n. Distinguishing graphs by their spectra, Smith normal forms and complements. \emph{Applied Mathematics and Computation} 490 (2025) 129198.
\bibitem{alfaro}{C.A. Alfaro, Graphs with real algebraic co-rank at most two, \emph{Linear Algebra and its Applications} 556 (2018) 100--107.}
\bibitem{alfaro2}C.A. Alfaro. On graphs with 2 trivial distance ideals. \emph{Linear Algebra and its Applications} 597 (2020) 69--85.
\bibitem{alfacorrval}{C.A. Alfaro, H.H. Corrales, C.E. Valencia. Critical ideals of signed graphs with twin vertices. \emph{Advances in Applied Mathematics} 86 (2017) 99--131.}
\bibitem{directedideals}C.A. Alfaro, T.I. Hoekstra-Mendoza, J.P. Serrano, R.R. Villagr\'{a}n. Distance ideals of digraphs. \emph{Applied Mathematics and Computation} 500 (2025) 129430.
\bibitem{alflin}{C.A. Alfaro and J. C.-H. Lin. Critical ideals, minimum rank and zero forcing number. \emph{Applied Mathematics and Computation} 358 (2019), 305--313.}
\bibitem{at}C.A. Alfaro, L. Taylor. Distance ideals of graphs. \emph{Linear Algebra and its Applications} 584 (2020) 127--144.
\bibitem{alfaval0}{C.A. Alfaro, C.E. Valencia. On the sandpile group of the cone of a graph. \emph{Linear Algebra and its Applications} 436 (2012), 1154--1176.}
\bibitem{alfaval}{C.A. Alfaro and C.E. Valencia. Graphs with two trivial critical ideals. \emph{Discrete Applied Mathematics} 167 (2014), 33--44.}
\bibitem{alfaval1}{C.A. Alfaro and C.E. Valencia. Small clique number graphs with three trivial critical ideals. \emph{Special Matrices} 6 (2018), 122--154.}
\bibitem{AVV}{C.A. Alfaro, C.E. Valencia and A. V\'azquez-\'Avila, Digraphs with at most one trivial critical ideal. \emph{Linear and Multilinear Algebra} 66 (2018), no. 10, 2036--2048.}

\bibitem{BK}R.B. Bapat, M. Karimi. Smith normal form of some distance matrices, \emph{Linear and Multilinear Algebra}. 65 (2017) 1117--1130.
\bibitem{absv1}C. A. Alfaro, M. Barrus, John Sinkovic and R. R. Villagr{\'a}n, Graphs with few trivial characteristic ideals, \emph{Linear Algebra and its Applications}, 615 (2021) 155-174

\bibitem{bondy}{J.A. Bondy and U.S.R. Murty, Graph Theory, Grad. Texts in Math., vol. 244, Springer, 2008.}
\bibitem{linear3leaf} Brandstädt, Andreas; Le, Van Bang,. Structure and linear time recognition of 3-leaf powers, \emph{Information Processing Letters}, 98 (4): 133–138 (2006)

\bibitem{CCY}{G.J. Chang, J.-J. Chen and J.-H. Yan, Quasi-threshold graphs. \emph{Discrete Applied Mathematics} 69 (1996), no. 3, 247--255.}
\bibitem{CI}{M. Chudnovsky. The structure of bull-free graphs I - Three-edge-paths with centers and anticenters. \emph{Journal of Combinatorial Theory, Series B} 102 (2012), no. 1, 233--251.}
\bibitem{CII}{M. Chudnovsky. The structure of bull-free graphs II and III - A summary. \emph{Journal of Combinatorial Theory, Series B} 102 (2012), no. 1, 252--282.}
\bibitem{perfect}M. Chudnovsky, N. Robertson, P. Seymour, R. Thomas. The strong perfect graph theorem, \emph{Annals of Mathematics}, 164 (2006) 1 51--229.
\bibitem{CS}M. Chudnovsky and V. Sivaraman, Odd holes in bull-free graphs. \emph{SIAM Journal on Discrete Mathematics} 32 (2018), no. 2, 951--955. 
\bibitem{corrval}{H. Corrales and C.E. Valencia. On the critical ideals of graphs. \emph{Linear Algebra and its Applications} 439 (2013), 3870--3892.}

\bibitem{DGHN}M. Dom, J. Guo, F. H\"uffner and R. Niedermeie. Error Compensation in Leaf Power Problems. \emph{Algorithmica} 44 (2006) 363--381.

\bibitem{GP}{R.L. Graham, H.O. Pollak. On the addressing problem for loop switching. \emph{Bell System Tech. J.} 50 (1971) 2495--2519.}

\bibitem{G}{M.C. Golumbic. Trivially perfect graphs. \emph{Discrete Mathematics} 24 (1978), no. 1, 105--107.}

\bibitem{HW}{Y. Hou and C. Woo, Distance unimodular equivalence of graphs. \emph{Linear Multilinear Algebra} 56 (2008), no. 6, 611--626. }
\bibitem{H77}{E. Howorka, A characterization of distance-hereditary graphs. \emph{The Quarterly Journal of Mathematics} Vol. 28, issue 4, (1977), 417--420.}

\bibitem{KB} R. Kannan, A. Bachem,  Polynomial algorithms for computing the Smith and Hermite normal forms of an integer matrix. \emph{SIAM Journal on Computing} 8 (1979), no. 4, 499--507.

\bibitem{merino}C. Merino, The chip-firing game. \emph{Discrete Mathematics} 302 (2005), 188--210.
\bibitem{nishimura} N. Nishimura, P. Ragde, D.M. Thilikos. On graph powers for leaf-labeled trees, \emph{J. Algorithms} 42 No.1 69-108 (2002)
\bibitem{Nbook}D. Northcott. Finite Free Resolutions. \emph{Cambridge Tracts in Mathematics} (1976), Cambridge University Press.

\bibitem{O88}{S. Olariu. Paw-free graphs. \emph{Information Processing Letters} 28 (1988), 53–54.}
\bibitem{rautenbach} D. Rautenbach. Some remarks about leaf roots, \emph{Discrete Mathematics}, 306 (13): 1456–1461 (2006)
\bibitem{Rubio}{C. Rubio-Montiel, A new characterization of trivially perfect graphs. \emph{Electronic Journal of Graph Theory and Applications} 3 (2015) 1 22--26.}
\bibitem{rushanan}{J.J. Rushanan. Topics in integral matrices and abelian group codes [dissertation]. ProQuest LLC, Ann Arbor (MI): California Institute of Technology; 1986.}

\bibitem{stanley}{R.P. Stanley. Smith normal form in combinatorics. \emph{Journal of Combinatorial Theory, Series A} 144 (2016), 476--495.}
\bibitem{interview} R.P. Stanley interviewed by Toufik Mansour, \emph{Enumerative Combinatorics and Applications} 1:1 (2021) \url{https://doi.org/10.54550/ECA2021V1S1I1}.
\bibitem{sage}{The developers. Sage Tutorial (Release 7.6). 2017. Number of pages 111. Located at: http://www.sagemath.org.}

\end{thebibliography}

\appendix
\section{Codes}\label{codes}

In this appendix, we provide the codes written in SageMath~\cite{sage} that were used in the manuscript.

\begin{lstlisting}[language=Python, caption={$P_{5}$ is forbidden for $\Lambda_{2}^{\mathbb{Q}}$}, label={lst:P5}]
G=graphs.PathGraph(5)
R=PolynomialRing(QQ,G.order(),x)
D=G.distance_matrix()
DX=diagonal_matrix(R.gens())+D
g=ideal(DX.minors(3)).groebner_basis()
print(g==[1])
i=1
for d in (2,3):
    if d==3: i=5

    DX[0,4]=d
    g=ideal(DX.minors(3)).groebner_basis()
    print(i,g==[1])
    DX=diagonal_matrix(R.gens())+D

    DX[0,4]=d
    DX[0,3]=2
    g=ideal(DX.minors(3)).groebner_basis()
    print(i+1,g==[1])
    DX=diagonal_matrix(R.gens())+D

    DX[0,4]=d
    DX[0,3]=2
    DX[1,4]=2
    g=ideal(DX.minors(3)).groebner_basis()
    print(i+2,g==[1])
    DX=diagonal_matrix(R.gens())+D

    DX[0,4]=d
    DX[1,4]=2
    g=ideal(DX.minors(3)).groebner_basis()
    print(i+3,g==[1])
\end{lstlisting}

\begin{lstlisting}[language=Python, caption={$H_{1},H_{2},H_{3},H_{4}$ are forbidden for $\Lambda_{2}^{\mathbb{Q}}$}, label={lst:H}]
H1=Graph({0:[1],1:[2],2:[3],3:[4],4:[1]})
R=PolynomialRing(QQ,H1.order(),x)
D=H1.distance_matrix()
DX=diagonal_matrix(R.gens())+D
g1=ideal(DX.minors(3)).groebner_basis()

H2=Graph({0:[1],1:[2],2:[3],4:[0,2]})
R=PolynomialRing(QQ,H2.order(),x)
D=H2.distance_matrix()
DX=diagonal_matrix(R.gens())+D
g2=ideal(DX.minors(3)).groebner_basis()

H3=Graph({0:[1,2,3],1:[2,3],2:[4,5],3:[4,5]})
R=PolynomialRing(QQ,H3.order(),x)
D=H3.distance_matrix()
DX=diagonal_matrix(R.gens())+D
g3=ideal(DX.minors(3)).groebner_basis()

H4=Graph({0:[1,2,3],1:[2,3],2:[4,5],3:[4,5],4:[5]})
R=PolynomialRing(QQ,H4.order(),x)
D=H4.distance_matrix()
DX=diagonal_matrix(R.gens())+D
g4=ideal(DX.minors(3)).groebner_basis()

K222=graphs.CompleteMultipartiteGraph([2,2,2])
R=PolynomialRing(QQ,K222.order(),x)
D=H4.distance_matrix()
DX=diagonal_matrix(R.gens())+D
g=ideal(DX.minors(3)).groebner_basis()

g1==[1] and g2==[1] and g3==[1] and g4==[1] and g==[1]
\end{lstlisting}

\begin{lstlisting}[language=Python, caption=Third distance ideal for $K_{1,n,m}$., label={lst:caseii}]
R = PolynomialRing(QQ,'w,x,y,z')
R.inject_variables()
def third_dist_ideal(g,X):
    D = g.distance_matrix()
    M = diagonal_matrix(X)+D
    I = R.ideal(M.minors(3))
    return list(I.groebner_basis())

def Equis(V):
    X =[]
    for i in V:
        if i == 0:
            X.append(x)
        elif i < 4:
            X.append(y)
        else:
            X.append(z)
    return X

G = graphs.CompleteMultipartiteGraph([1,3,6])
zs = [4,5,6,7,8,9]
ys = [1,2,3]

for i in range(2,7):
    for j in range(1,4):
        H = G.subgraph(ys[:j]+zs[:i])
        print(third_dist_ideal(H,Equis(H.vertices(sort=True))))
print('next')
for i in range(2,7):
    for j in range(1,4):
        H = G.subgraph([0]+ys[:j]+zs[:i])
        print(third_dist_ideal(H,Equis(H.vertices(sort=True))))
\end{lstlisting}

\begin{lstlisting}[language=Python, caption=Third distance ideal for $P_3^{(-n,m,0)}$., label={lst:caseiv1}]
R = PolynomialRing(QQ,'x,y,z')
R.inject_variables()
def third_dist_ideal(g,X):
    D = g.distance_matrix()
    M = diagonal_matrix(X)+D
    I = R.ideal(M.minors(3))
    return list(I.groebner_basis())

def Equis(V):
    X =[]
    for i in V:
        if i < 6:
            X.append(x)
        elif i < 12:
            X.append(y)
        elif i == 12:
            X.append(z)
    return X

G = Graph({0:[3,4,2,1,5,6,9,7,10,11,8], 1:[5,4,2,3,6,9,7,10,11,8], 2:[3,4,5,6,9,7,10,11,8], 3:[4,5,6,9,7,10,11,8], 4:[5,6,9,7,10,11,8], 5:[6,9,7,10,11,8], 6:[12], 7:[12], 8:[12], 9:[12], 10:[12], 11:[12], 12:[]})
xs = [0,1,2,3,4,5]
ys = [6,7,8,9,10,11]
    
for i in range(2,7):#this is case iii)
    for j in range(2,7):
        H = G.subgraph(xs[:i]+ys[:j])
        print(third_dist_ideal(H,Equis(H.vertices(sort=True))))
 
for i in range(7):#general case for the first graph in case iv)
    for j in range(1,7):
        H = G.subgraph(xs[:i]+ys[:j]+[12])
        if H.order()>2:
            print(third_dist_ideal(H,Equis(H.vertices(sort=True))))
\end{lstlisting}

\begin{lstlisting}[language=Python, caption=Third distance ideal for $P_3^{(-n_1,0,-n_2)}$., label={lst:caseiv2}]
R = PolynomialRing(QQ,'x,y,z')
R.inject_variables()
def third_dist_ideal(g,X):
    D = g.distance_matrix()
    M = diagonal_matrix(X)+D
    I = R.ideal(M.minors(3))
    return list(I.groebner_basis())

def Equis(V):
    X =[]
    for i in V:
        if i < 6:
            X.append(x)
        elif i == 6:
            X.append(y)
        else:
            X.append(z)
    return X

G = Graph({0:[1,2,3,4,5,6], 1:[2,3,4,5,6], 2:[3,4,5,6], 3:[4,5,6], 4:[5,6], 5:[6], 6:[7,8,9,10,12,11], 7:[8,9,10,11,12], 8:[9,10,11,12], 9:[10,11,12], 10:[11,12], 11:[12], 12:[]})
xs = [0,1,2,3,4,5]
ys = [7,8,9,10,11,12]
    
for i in range(1,7):
    for j in range(i,7):#by symmetry
        H = G.subgraph(xs[:i]+[6]+ys[:j])
        print(third_dist_ideal(H,Equis(H.vertices(sort=True))))
\end{lstlisting}

\begin{lstlisting}[language=Python, caption=Third distance ideal for $P_3^{(-n,0,m)}$., label={lst:caseiv3}]
R = PolynomialRing(QQ,'x,y,z')
R.inject_variables()
def third_dist_ideal(g,X):
    D = g.distance_matrix()
    M = diagonal_matrix(X)+D
    I = R.ideal(M.minors(3))
    return list(I.groebner_basis())

def Equis(V):
    X =[]
    for i in V:
        if i < 6:
            X.append(x)
        elif i == 6:
            X.append(y)
        else:
            X.append(z)
    return X

G = Graph({0:[1,2,3,4,5,6], 1:[2,3,4,5,6], 2:[3,4,5,6], 3:[4,5,6], 4:[5,6], 5:[6], 6:[12,7,8,11,10,9], 7:[], 8:[], 9:[], 10:[], 11:[], 12:[]})
xs = [0,1,2,3,4,5]
ys = [7,8,9,10,11,12]
    
for i in range(1,7):
    for j in range(1,7):
        H = G.subgraph(xs[:i]+[6]+ys[:j])
        print(third_dist_ideal(H,Equis(H.vertices(sort=True))))
\end{lstlisting}

\begin{lstlisting}[language=Python, caption=Third distance ideal for $P_4^{(0,-n,0,0)}$., label={lst:caseiv4}]
R = PolynomialRing(QQ,'w,x,y,z')
R.inject_variables()
def third_dist_ideal(g,X):
    D = g.distance_matrix()
    M = diagonal_matrix(X)+D
    I = R.ideal(M.minors(3))
    return list(I.groebner_basis())

def Equis(V):
    X =[]
    for i in V:
        if i == 0:
            X.append(w)
        elif i < 7:
            X.append(x)
        elif i==7:
            X.append(y)
        else:
            X.append(z)
    return X

G = Graph({0:[1,6,2,4,3,5], 1:[2,3,4,5,6,7], 2:[3,4,5,6,7], 3:[4,5,6,7], 4:[5,6,7], 5:[6,7], 6:[7], 7:[8], 8:[]})
xs = [1,2,3,4,5,6]

for i in range(1,7):
    H = G.subgraph([0]+xs[:i]+[7,8])
    print(third_dist_ideal(H,Equis(H.vertices(sort=True))))
\end{lstlisting}

\begin{lstlisting}[language=Python, caption=Third distance ideal for $P_4^{(-n,0,0,0)}$., label={lst:caseiv5}]
R = PolynomialRing(QQ,'w,x,y,z')
R.inject_variables()
def third_dist_ideal(g,X):
    D = g.distance_matrix()
    M = diagonal_matrix(X)+D
    I = R.ideal(M.minors(3))
    return list(I.groebner_basis())

def Equis(V):
    X =[]
    for i in V:
        if i < 6:
            X.append(w)
        elif i == 6:
            X.append(x)
        elif i==7:
            X.append(y)
        else:
            X.append(z)
    return X

G = Graph({0:[1,2,3,4,5,6], 1:[2,3,4,5,6], 2:[3,4,5,6], 3:[4,5,6], 4:[5,6], 5:[6], 6:[7], 7:[8], 8:[]})
xs = [0,1,2,3,4,5]

for i in range(1,7):
    H = G.subgraph(xs[:i]+[6,7,8])
    print(third_dist_ideal(H,Equis(H.vertices(sort=True))))
\end{lstlisting}

\end{document}